\numberwithin{equation}{subsection}
\newtheorem{theorem}{Theorem}[subsection]
\newtheorem{lemma}[theorem]{Lemma}
\newtheorem{example}[theorem]{Example}
\newtheorem{corollary}[theorem]{Corollary}
\newtheorem{definition}[theorem]{Definition}
\newtheorem{proposition}[theorem]{Proposition}
\newtheorem*{thm1}{Theorem 1}
\newtheorem*{thm2}{Theorem 2}
\newtheorem*{cor3}{Corollary 3}
\theoremstyle{remark}
\newtheorem{rmk}[theorem]{Remark}
\newcommand{\GZip}{\mathop{\text{$G$-{\tt Zip}}}\nolimits}
\newcommand{\GF}{\mathop{\text{$G$-{\tt ZipFlag}}}\nolimits}
\newcommand{\VB}{\mathfrak{VB}}
\newskip\procskipamount
\newskip\interskipamount
\newskip\refskipamount
\newcommand{\procskip}{\vskip\procskipamount}
\newcommand{\interskip}{\vskip\interskipamount}
\newcommand{\refskip}{\vskip\refskipamount}
\newcommand{\procbreak}{\par
   \ifdim\lastskip<\procskipamount\removelastskip
   \penalty-100
   \procskip\fi
   \noindent\ignorespaces}
\newcommand{\titlebreak}{\par%
\ifdim\lastskip<\interskipamount\removelastskip%
\penalty10000%
\interskip\fi%
\noindent}%
\newcommand{\interbreak}{\par%
\ifdim\lastskip<\interskipamount\removelastskip%
\penalty-100%
\interskip\fi%
\noindent\ignorespaces}%
\newcommand{\refbreak}{\par%
\ifdim\lastskip<\refskipamount\removelastskip%
\penalty-100%
\refskip\fi%
\noindent\ignorespaces}%
\newcounter{listcounter}
\newcounter{deflistcounter}
\newcounter{equivcounter}
\newskip{\itemsepamount}
\newskip{\topsepamount}
\newenvironment{assertionlist}{%
  \begin{list}
    {\upshape (\arabic{listcounter})}
    {\setlength{\leftmargin}{18pt}
     \setlength{\rightmargin}{0pt}
     \setlength{\itemindent}{0pt}
     \setlength{\labelsep}{5pt}
     \setlength{\labelwidth}{13pt}
     \setlength{\listparindent}{\parindent}
     \setlength{\parsep}{0pt}
     \setlength{\itemsep}{\itemsepamount}
     \setlength{\topsep}{\topsepamount}
     \usecounter{listcounter}}}
  {\end{list}}
\newenvironment{definitionlist}{%
  \begin{list}
    {\upshape (\alph{deflistcounter})}
    {\setlength{\leftmargin}{18pt}
     \setlength{\rightmargin}{0pt}
     \setlength{\itemindent}{0pt}
     \setlength{\labelsep}{5pt}
     \setlength{\labelwidth}{13pt}
     \setlength{\listparindent}{\parindent}
     \setlength{\parsep}{0pt}
     \setlength{\itemsep}{\itemsepamount}
     \setlength{\topsep}{\topsepamount}
     \usecounter{deflistcounter}}}
  {\end{list}}
\newenvironment{equivlist}{%
  \begin{list}
    {\upshape (\roman{equivcounter})}
    {\setlength{\leftmargin}{18pt}
     \setlength{\rightmargin}{0pt}
     \setlength{\itemindent}{0pt}
     \setlength{\labelsep}{5pt}
     \setlength{\labelwidth}{13pt}
     \setlength{\listparindent}{\parindent}
     \setlength{\parsep}{0pt}
     \setlength{\itemsep}{\itemsepamount}
     \setlength{\topsep}{\topsepamount}
     \usecounter{equivcounter}}}
  {\end{list}}
\newcommand{\Bcal}{{\mathcal B}}
\newcommand{\Fcal}{{\mathcal F}}
\newcommand{\Hcal}{{\mathcal H}}
\newcommand{\Lcal}{{\mathcal L}}
\newcommand{\Ncal}{{\mathcal N}}
\newcommand{\Ocal}{{\mathcal O}}
\newcommand{\Ucal}{{\mathcal U}}
\newcommand{\Vcal}{{\mathcal V}}
\newcommand{\Xcal}{{\mathcal X}}
\newcommand{\Zcal}{{\mathcal Z}}
\renewcommand{\AA}{\mathbb{A}}
\newcommand{\FF}{\mathbb{F}}
\newcommand{\GG}{\mathbb{G}}
\newcommand{\NN}{\mathbb{N}}
\newcommand{\PP}{\mathbb{P}}
\newcommand{\QQ}{\mathbb{Q}}
\newcommand{\RR}{\mathbb{R}}
\newcommand{\ZZ}{\mathbb{Z}}
\newcommand{\Ascr}{{\mathscr A}}
\newcommand{\Escr}{{\mathscr E}}
\newcommand{\Sscr}{{\mathscr S}}
\newcommand{\Vscr}{{\mathscr V}}
\DeclareMathOperator{\ad}{ad}
\newcommand{\dR}{{\rm dR}}
\DeclareMathOperator{\Gal}{Gal}
\DeclareMathOperator{\hdg}{Hdg}
\DeclareMathOperator{\Hom}{Hom}
\DeclareMathOperator{\Span}{Span}
\DeclareMathOperator{\pr}{pr}
\DeclareMathOperator{\Ker}{Ker}
\DeclareMathOperator{\Rep}{Rep}
\DeclareMathOperator{\res}{Res}
\DeclareMathOperator{\Sbt}{Sbt}
\DeclareMathOperator{\Sh}{Sh}
\DeclareMathOperator{\Sym}{Sym}
\DeclareMathOperator{\zip}{zip}
\DeclareMathOperator{\loc}{loc}
\DeclareMathOperator{\Sht}{Sht}
\DeclareMathOperator{\SL}{SL}
\DeclareMathOperator{\GL}{GL}
\DeclareMathOperator{\GSp}{GSp}
\DeclareMathOperator{\Sp}{Sp}
\DeclareMathOperator{\U}{U}
\DeclareMathOperator{\GU}{GU}
\newcommand{\shgx}{\Sh(\mathbf G, \mathbf X)}
\newcommand{\egx}{E(\mathbf G, \mathbf X)}
\newcommand{\gx}{(\mathbf G, \mathbf X)}
\newcommand{\gofaf}{\mathbf G(\mathbf A_f)}
\newcommand{\id}{{\rm Id}}
\newcommand{\loccit}{{\em loc.\ cit. }}
\newcommand{\loccitn}{{\em loc.\ cit.}}
\newcommand{\cf}{{\em cf. }}
\newcommand{\diag}{{\rm diag}}
\newcommand{\fil}{{\rm Fil}}
\renewcommand{\Im}{{\rm Im}}
\newcommand{\xg}{\mathbf X_{g}}
\newcommand{\shdagsp}{(\GSp(2g), \xg)}
\DeclareMathOperator{\Std}{Std}
\DeclareMathOperator{\conj}{conj}
\DeclareMathOperator{\Fil}{Fil}
\DeclareMathOperator{\Isom}{Isom}
\DeclareMathOperator{\pf}{pf}
\DeclareMathOperator{\Ind}{Ind}
\DeclareMathOperator{\vs}{Vec}
\DeclareMathOperator{\deter}{det}
\newcommand{\relmiddle}[1]{\mathrel{}\middle#1\mathrel{}}
\begin{document}

\author{Naoki Imai and Jean-Stefan Koskivirta}

\title{Automorphic vector bundles on the stack of $G$-zips} 

\date{}

\maketitle
\begin{abstract}
For a connected reductive group $G$ over a finite field, we study automorphic vector bundles on the stack of $G$-zips. In particular, we give a formula in the general case for the space of global sections of an automorphic vector bundle in terms of the Brylinski--Kostant filtration. 
Moreover, we give an equivalence of categories between the category of automorphic vector bundles on the stack of $G$-zips and a category of admissible  modules with actions of a zero-dimensional algebraic subgroup a Levi subgroup and monodromy operators. 
\end{abstract}

\footnotetext{2010 \textit{Mathematics Subject Classification}. 
 Primary: 14G35; Secondary: 20G40.}

\section{Introduction}
The stack of $G$-zips was introduced by Pink--Wedhorn--Ziegler (\cite{Pink-Wedhorn-Ziegler-zip-data} and \cite{PinkWedhornZiegler-F-Zips-additional-structure}) based on the notion of F-zip defined in the work of Moonen--Wedhorn (\cite{Moonen-Wedhorn-Discrete-Invariants}). 
In this paper, we investigate vector bundles on the stack of $G$-zips. Let $G$ be a connected reductive group over a finite field $\FF_q$ and let $k$ denote an algebraic closure of $\FF_q$. For a cocharacter $\mu \colon \GG_{\mathrm{m},k}\to G_k$, Pink--Wedhorn--Ziegler have defined a smooth finite stack $\GZip^\mu$ over $k$, called the stack of $G$-zips of type $\mu$. Many authors have shown that it is a useful tool to study the geometry of Shimura varieties in characteristic $p$. 
For example, let $\shgx_{K}$ be a Shimura variety of Hodge-type over a number field $\mathbf{E}$ with good reduction at a prime $p$. Kisin  (\cite{Kisin-Hodge-Type-Shimura}) and Vasiu (\cite{Vasiu-Preabelian-integral-canonical-models}) have constructed an integral model $\Sscr_K$ over $\Ocal_{\mathbf{E}_v}$ at all places $v|p$ in $\mathbf{E}$. Denote by $S_K$ the geometric special fiber of $\Sscr_K$ and by $G$ the special fiber over $\FF_p$ of $\mathbf{G}$ (in the context of Shimura varieties, we take $q=p$). 
Let $\mu$ be the cocharacter attached naturally to $\mathbf{X}$. Then Zhang (\cite{Zhang-EO-Hodge}) has shown that there exists a smooth morphism of stacks $\zeta \colon S_K\to \GZip^\mu$, which is also surjective. The second author and Wedhorn have used the stack $\GZip^\mu$ to construct $\mu$-ordinary Hasse invariants in \cite{Koskivirta-Wedhorn-Hasse}, and this result was later generalized to all Ekedahl--Oort strata with Goldring (\cite{Goldring-Koskivirta-Strata-Hasse}).

In the paper \cite{Koskivirta-automforms-GZip}, the second author studied the space of global sections of the family of vector bundles $(\mathcal{V}_I (\lambda))_{\lambda \in X^*(T)}$. To explain what these vector bundles are, first recall that the cocharacter $\mu$ yields a parabolic subgroup $P\subset G_k$ as well as a Levi subgroup $L\subset P$, which is equal to the centralizer of $\mu$ (see \S \ref{subsec-cochar} for details). 
Then for any algebraic $P$-representation $(V,\rho)$ over $k$, there is a naturally attached vector bundle $\mathcal{V}(\rho)$ of rank $\dim (V)$ on $\GZip^\mu$ modeled on $(V,\rho)$ (see \S \ref{sec-vector-bundles-gzipz}). 
We call $\Vcal(\rho)$ an automorphic vector bundle on $\GZip^\mu$ (\cf \cite[III. 2]{Milne-ann-arbor}). 

The vector bundle $\mathcal{V}_I (\lambda)$ (for $\lambda\in X^*(T)$ a character of a maximal torus $T\subset G$) is by definition the vector bundle attached to the $P$-representation $V_I (\lambda)=\Ind_B^P(\lambda)$, where $B\subset P$ is a Borel subgroup (containing $T$, and appropriately chosen), $\Ind$ denotes induction  and $I$ denotes the set of simple roots of $L$. 
For a $k$-algebraic group $H$, we write $\Rep(H)$ for the category of finite dimensional algebraic representations of $H$ over $k$. The natural projection $P\to L$ modulo the unipotent radical induces a fully faithful functor $\Rep(L)\to \Rep(P)$. In particular, all representations of the form $V_I (\lambda)$ lie in the full subcategory $\Rep(L)$. In the case when $G$ is split over $\FF_p$, we showed in a previous work (\cite[Theorem 1]{Koskivirta-automforms-GZip}) that $H^0(\GZip^\mu,\mathcal{V}_I (\lambda))$ can be expressed as 
\begin{equation}\label{intor-formula-Vlambda}
H^0(\GZip^\mu,\mathcal{V}_I (\lambda))=V_I (\lambda)^{L(\FF_p)} \cap V_I (\lambda)_{\leq 0}
\end{equation}
where $V_I (\lambda)^{L(\FF_p)}$ denotes the $L(\FF_p)$-invariant subspace of $V_I (\lambda)$ and $V_I (\lambda)_{\leq 0}\subset V_I (\lambda)$ is defined as follows: It is the direct sum of the $T$-weight spaces $V_I (\lambda)_\nu$ for the weights $\nu$ satisfying $\langle \nu, \alpha^\vee \rangle \leq 0$ for any simple root $\alpha$ outside of $L$.

In this paper, we vastly generalize the formula \eqref{intor-formula-Vlambda} to the most general case. We do not assume that $G$ is split over $\FF_q$, and more importantly, we consider arbitrary representations in the larger category $\Rep(P)$ as opposed to the subcategory $\Rep(L)$. In the context of Shimura varieties, there are many interesting vector bundles other than the family $(V_I (\lambda))_\lambda$, which may not always arise from representations in $\Rep(L)$. 
For example, in the article \cite{Urban-nearly-holomorphic}, nearly-holomorphic modular forms of weight $k$ and order $\leq r$ are defined as sections of the vector bundle $\omega^{\otimes (k-r)}\Sym^r(\Hcal^1_{\dR})$ on the modular curve $X(N)$ for some level $N\geq 1$. Here, $\Hcal^1_{\dR}$ is the sheaf of relative de Rham cohomology of the universal elliptic curve $\Escr\to X(N)$, and $0\subset \omega \subset \Hcal^1_{\dR}$ is the usual Hodge filtration. In this context, the group $G$ is $\GL_{2}$, $P=B$ is a Borel subgroup of $G$. 
The vector bundle $\Hcal^1_{\dR}$ is attached to the dual of the standard representation of $\GL_2$ (viewed by restriction as a representation of $P$). Similarly, $\Sym^r(\Hcal^1_{\dR})$ is attached to the $r$-th symmetric power of that representation. More generally, on the Siegel-type Shimura variety $\Ascr_g$ (which parametrize principally polarized abelian varieties of rank $g$), the universal abelian scheme yields a rank $2g$ vector bundle $\Hcal^1_{\dR}$ on $\Ascr_{g}$. One can extend the definition of $\Hcal^1_{\dR}$ to Hodge-type Shimura varieties after choosing a Siegel embedding. Furthermore, it extends to a vector bundle on the integral model $\Sscr_K$ of Kisin and Vasiu. 
This example shows that it is desirable to also understand vector bundles that arise from general representations of $P$. In this paper, we determine the space $H^0(\GZip^\mu,\mathcal{V}(\rho))$ for any cocharacter datum $(G,\mu)$ (for the definition of cocharacter datum, see \S \ref{subsec-cochar}) and for any representation $(V,\rho)\in \Rep(P)$. By Zhang's smooth surjective map $\zeta:S_K\to \GZip^\mu$, this determines a natural Hecke-equivariant subspace
\begin{equation}\label{inclusion-intro}
\xymatrix@1@M=5pt{
H^0(\GZip^\mu,\mathcal{V}(\rho)) \ar@{^{(}->}[r]_-{\zeta^*} & 
H^0(S_K,\mathcal{V}(\rho)).
}    
\end{equation}
In particular, we obtain Hecke-equivariant sections of $\mathcal{V}(\rho)$ on $S_K$. Furthermore, we can potentially study sections on Ekedahl--Oort strata by the same method, as demonstrated in \cite{Goldring-Koskivirta-Strata-Hasse}. Another motivation for describing sections on $\GZip^\mu$ is that we would like to determine which weights $\lambda$ admit nonzero automorphic forms. Specifically, let $C_{K}$ denote the set of $\lambda\in X^*(T)$ such that $H^0(S_K,\mathcal{V}_I (\lambda))\neq 0$. Similarly, let $C_{\zip}$ be the set of $\lambda$ such that $H^0(\GZip^\mu,\mathcal{V}_I (\lambda))\neq 0$ (one can show that they are cones in $X^*(T)$). The inclusion \eqref{inclusion-intro} shows that $C_{\zip} \subset C_{K}$. Denote by $(-)_{\QQ_{>0}}$ the generated $\QQ_{>0}$-cones. Then one can see (\cite[Corollary 1.5.3]{Koskivirta-automforms-GZip}) that $ C_{K,\QQ_{>0}}$ is independent of $K$, and we conjecture (\cite[Conjecture 2.1.6]{Goldring-Koskivirta-global-sections-compositio}) that it coincides with $C_{\zip,\QQ_{>0}}$. Goldring and the second author proved this conjecture in some case in \cite[Theorem D]{Goldring-Koskivirta-global-sections-compositio}. 

We show that the space $H^0(\GZip^\mu,\mathcal{V}(\rho))$ is given by the intersection of the $L_{\varphi}$-invariants of $V$ with a generalized Brylinski--Kostant filtration (where $L_\varphi\subset L$ is a certain $0$-dimensional group, see \eqref{Lphi-equ}). 
For the general statement, see Theorem \ref{thm-main}. For the sake of brevity, we give a simplified statement in this introduction. Assume here that $P$ is defined over $\FF_q$ (in this case, $L_\varphi=L(\FF_q)$). Let $\wp^* \colon  X^*(T)_\RR \to X^*(T)_\RR$ be the map induced by the Lang torsor $\wp \colon T \to T; \ g \mapsto g \varphi(g)^{-1}$, where $\varphi  \colon G\to G$ denotes the $q$-th power Frobenius homomorphism. Let $V=\bigoplus_\nu V_\nu$ be the weight decomposition of $V$. For $\chi\in X^*(T)_\RR$, let $\fil_{\chi}^P V_\nu$ be the Brylinski--Kostant filtration of $V_\nu$ (see \eqref{equ-FilPchi}). 

\begin{thm1}[Corollary \ref{cor-main-Fq}]\label{intro-thm}
Assume that $P$ is defined over $\FF_q$. For any $(V,\rho)\in \Rep(P)$, we have
\begin{equation*}
H^0(\GZip^\mu,\mathcal{V}(\rho))=V^{L(\FF_q)}\cap 
 \bigoplus_{\nu \in X^*(T)} 
 \fil_{{\wp^*}^{-1}(\nu)}^P 
 V_{\nu}.
\end{equation*}
\end{thm1}

In the more simple case of \cite{Koskivirta-automforms-GZip}, the space $V_I (\lambda)_{\leq 0}$ appearing in the equation \eqref{intor-formula-Vlambda} above is a sum of weight spaces of $V$. In the general case, $H^0(\GZip^\mu,\mathcal{V}(\rho))$ cannot be written as an intersection of $V^{L(\FF_q)}$ with a sum of weight spaces of $V$ (see Examples \ref{example-notsum} for a counter-example).
We include examples of concrete computations of the space $H^0(\GZip^\mu,\mathcal{V}(\rho))$ in \S \ref{sec-examples}.

Our second result concerns the category $\VB(\GZip^\mu)$ of vector bundles on $\GZip^\mu$. As explained above, there is a natural functor $\mathcal{V} \colon \Rep(P) \to \VB(\GZip^\mu)$. 
Denote by $\VB_P(\GZip^\mu)$ the full  subcategory which is equal to the essential image of $\mathcal{V}$. 
We give an explicit description of the category $\VB_P(\GZip^\mu)$ of automorphic vector bundles. 
We define the category of $L_{\varphi}$-modules with $\Delta^P$-monodromy (see Definition \ref{deffilmod}). Its objects are $L_{\varphi}$-modules $W$ endowed with a set of monodromy operators indexed by $\Delta^P$ (where $\Delta^P$ denotes the set of simple roots outside the parabolic $P$). There is a natural functor $F_{\mathrm{MN}} \colon \Rep(P)\to L_{\varphi}\mathchar`-\mathrm{MN}_{\Delta^P}$ (see \eqref{equ-functor-modfil}). An $L_{\varphi}$-module with $\Delta^P$-monodromy is called admissible if it lies in the essential image of $F_{\mathrm{MN}}$. The category of admissible $L_{\varphi}$-modules $\Delta^P$-monodromy is denoted by  $L_{\varphi}\mathchar`-\mathrm{MN}_{\Delta^P}^{\mathrm{adm}}$.

\begin{thm2}[Theorem \ref{equivNmod}]
The functor $\mathcal{V} \colon \Rep(P)\to \VB(\GZip^\mu)$ factors through the functor 
$F_{\mathrm{MN}} \colon \Rep(P)\to L_{\varphi}\mathchar`-\mathrm{MN}_{\Delta^P}^{\mathrm{adm}}$ and induces an equivalence of categories
\begin{equation}
 L_{\varphi}\mathchar`-\mathrm{MN}_{\Delta^P}^{\mathrm{adm}} \longrightarrow \VB_P(\GZip^\mu).
\end{equation}
\end{thm2}

In particular, we deduce the following. Let $S_K$ denote again the good reduction special fiber of a Hodge-type Shimura variety. Similarly, there is a natural functor $\Rep(P)\to \VB(S_K)$, where $\VB(S_K)$ denotes the category of vector bundles on $S_K$. Write again $\VB_P(S_K)$ for the essential image of $\Rep(P)$. 
In this context, we have the following:

\begin{cor3}[Corollary \ref{cor-factorize-Shimura}]
The functor $\mathcal{V} \colon \Rep(P)\to \VB_P(S_K)$ factors as 
\begin{equation}
\xymatrix@1@M=5pt{
\Rep(P) \ar[r]^-{F_{\mathrm{MN}}} &  L_{\varphi}\mathchar`-\mathrm{MN}_{\Delta^P}^{\mathrm{adm}} \ar[r]^-{\zeta^*} & \VB_P(S_K).
}
\end{equation}
\end{cor3}
The results of this paper will be used in the follow-up articles \cite{Imai-Koskivirta-partial-Hasse} and \cite{Goldring-Imai-Koskivirta-weights}, where we study partial Hasse invariants for Shimura varieties of Hodge-type. 
%We also study there the problem of determining for which $\lambda\in X^*(T)$ one has $H^0(\GZip^\mu,\Vcal_I(\lambda))\neq 0$.

\section{Vector bundles on the stack of $G$-zips}

\subsection{Notation}

Throughout the paper, $p$ is a prime number, $q$ is a power of $p$ and $\FF_q$ is the finite field with $q$ elements. We write $k=\overline{\FF}_q$ for an algebraic closure of $\FF_q$. 
Write $\sigma \in \Gal(k/\FF_q)$ for the $q$-th power Frobenius. 
For a $k$-scheme $X$ and $m \in \ZZ$, we write  $X^{(q^m)}$ for the base change of $X$ by $\sigma^m$ and $\varphi \colon X^{(q^m)}\to X^{(q^{m+1})}$ for the relative $q$-th power Frobenius morphism. 
For an algebraic representation $(V,\rho)$ of an algebraic group $H$ over $k$, 
let $(V^{(q)},\rho^{(q)})$ 
denote the representation 
$\rho \circ \varphi \colon H^{(q^{-1})} \to H \to \GL (V)$. 

The notation $G$ will denote a connected reductive group over $\FF_q$. 
We will always write $(B,T)$ for a Borel pair defined over $\FF_q$, i.e. $T \subset B \subset G_k$ are a maximal torus and a Borel subgroup defined over $\FF_q$. 
Let $B^+$ be the Borel subgroup of $G_k$ opposite to $B$ with respect to $T$ (i.e. the unique Borel subgroup of $G$ such that $B^+\cap B=T$). We will use the following notations:
\begin{itemize}
\item As usual, $X^*(T)$ (resp. $X_*(T)$) denotes the group of characters (resp. cocharacters) of $T$. The group $\Gal(k/\FF_q)$ acts naturally on these groups. Let $W=W(G_k,T)$ be the Weyl group of $G_k$. Similarly, $\Gal(k/\FF_q)$ acts on $W$. Furthermore, the actions of $\Gal(k/\FF_q)$ and $W$ on $X^*(T)$ and $X_*(T)$ are compatible in a natural sense.
\item $\Phi\subset X^*(T)$: the set of $T$-roots of $G$.
\item $\Phi_+\subset \Phi$: the system of positive roots with respect to $B^+$ (i.e. $\alpha \in \Phi_+$ when the $\alpha$-root group $U_{\alpha}$ is contained in $B^+$). This convention may differ from other authors. We use it to match the conventions of \cite[II.1.8]{jantzen-representations} and previous publications \cite{Goldring-Koskivirta-Strata-Hasse}, \cite{Koskivirta-automforms-GZip}.
\item $\Delta\subset \Phi_+$: the set of simple roots. 
\item For $\alpha \in \Phi$, let $s_\alpha \in W$ be the corresponding reflection. The system $(W,\{s_\alpha\}_{\alpha \in \Delta})$ is a Coxeter system. Write $\ell  \colon W\to \NN$ for the length function. Hence $\ell(s_\alpha)=1$ for all $\alpha\in \Phi$. Let $w_0$ denote the longest element of $W$.
\item For a subset $K\subset \Delta$, let $W_K$ denote the subgroup of $W$ generated by $\{s_\alpha \}_{\alpha \in K }$. Write $w_{0,K}$ for the longest element in $W_K$.
\item Let ${}^KW$ denote the subset of elements $w\in W$ which have minimal length in the coset $W_K w$. Then ${}^K W$ is a set of representatives of $W_K\backslash W$. The longest element in the set ${}^K W$ is $w_{0,K} w_0$.
\item $X_{+}^*(T)$ denotes the set of dominant characters, i.e. characters $\lambda\in X^*(T)$ such that $\langle \lambda,\alpha^\vee \rangle \geq 0$ for all $\alpha \in \Delta$.
\item For a subset $I\subset \Delta$, let $X_{+,I}^*(T)$ denote the set of characters $\lambda\in X^*(T)$ such that $\langle \lambda,\alpha^\vee \rangle \geq 0$ for all $\alpha \in I$. We call them $I$-dominant characters.
\end{itemize}

\begin{definition}\label{def-IP}
Let $P\subset G_k$ be a parabolic subgroup containing $B$ and let $L\subset P$ be the unique Levi subgroup of $P$ containing $T$. Then we define a subset $I_P\subset \Delta$ as the unique subset such that $W(L,T)=W_{I_P}$. For an arbitrary parabolic subgroup $P\subset G_k$ containing $T$, we put $I_P = I_{P'} \subset \Delta$ where $P'$ is the unique conjugate of $P$ containing $B$.
\end{definition}

\begin{itemize}
    \item For a parabolic $P\subset G_k$, we put $\Delta^P = \Delta \setminus I_P$.
\end{itemize}

\subsection{The stack of $G$-zips}

In this section, we recall some facts about the stack of $G$-zips of Pink--Wedhorn--Ziegler.

\subsubsection{Zip datum} \label{subsec-zipdatum}
Let $G$ be a connected reductive group over $\FF_q$. In this paper, a zip datum is a tuple $\Zcal = (G,P,L,Q,M,\varphi)$ consisting of the following objects:
\begin{enumerate}[(i)]
    \item $P\subset G_k$ and $Q\subset G_k$ are parabolic subgroups of $G_k$.
    \item $L\subset P$ and $M\subset Q$ are Levi subgroups such that $L^{(q)}=M$. In particular, the $q$-power Frobenius isogeny induces an isogeny $\varphi \colon L\to M$.  
\end{enumerate}

If $H$ is an algebraic group, denote by $R_{\mathrm{u}}(H)$ the unipotent radical of $H$. For $x\in P$, we can write uniquely $x=\overline{x}u$ with $\overline{x}\in L$ and $u\in R_{\mathrm{u}}(P)$. This defines a projection map $\theta^P_L \colon P\to L;\ x\mapsto \overline{x}$. 
Similarly, we have a projection $\theta^Q_M \colon Q\to M$. The zip group is the subgroup of $P\times Q$ defined by
\begin{equation}\label{zipgroup}
E \colonequals \{ (x,y)\in P\times Q \mid  \varphi(\theta^P_L(x))=\theta^Q_M(y) \}. 
\end{equation}
In other words, $E$ is the subgroup of $P\times Q$ generated by $R_{\mathrm{u}}(P)\times R_{\mathrm{u}}(Q)$ and elements of the form $(a,\varphi(a))$ with $a\in L$. Let $G\times G$ act on $G_k$ by $(a,b)\cdot g \colonequals agb^{-1}$, and let $E$ act on $G$ by restricting this action to $E$. The stack of $G$-zips of type $\Zcal$ can be defined as the quotient stack
\[
\GZip^\Zcal = \left[E\backslash G_k \right].
\]
Although the above definition of $\GZip^\Zcal$ may be the most concise one, there is more useful, equivalent definition in terms of torsors: By \cite[3C and 3D]{PinkWedhornZiegler-F-Zips-additional-structure}, the stack $\GZip^\Zcal$ is the stack over $k$ such that for all $k$-scheme $S$, the groupoid $\GZip(S)$
is the category of tuples $\underline{\mathcal{I}}=(\mathcal{I},\mathcal{I}_P,\mathcal{I}_Q,\iota)$, where $\mathcal{I}$ is a $G_k$-torsor over $S$, $\mathcal{I}_P\subset \mathcal{I}$ and $\mathcal{I}_Q\subset \mathcal{I}$ are a $P$-subtorsor and a $Q$-subtorsor of $\mathcal{I}$ respectively, and $\iota \colon (\mathcal{I}_P/R_{\mathrm{u}}(P))^{(p)}\to \mathcal{I}_Q/R_{\mathrm{u}}(Q)$ is an isomorphism of $M$-torsors.

\subsubsection{Cocharacter datum} \label{subsec-cochar}
A convenient way to give a zip datum is using cocharacters. A \emph{cocharacter datum} is a pair $(G,\mu)$ where $G$ is a reductive connected group over $\FF_q$ and $\mu \colon \GG_{\mathrm{m},k}\to G_k$ is a cocharacter. There is a natural way to attach to $(G,\mu)$ a zip datum $\Zcal_\mu$, defined as follows. First, denote by $P_+(\mu)$ (resp. $P_-(\mu)$) the unique parabolic subgroup of $G_k$ such that $P_+(\mu)(k)$ (resp. $P_-(\mu)(k)$) consists of the elements $g\in G(k)$ satisfying that the map 
\[
\GG_{\mathrm{m},k} \to G_{k}; \  t\mapsto\mu(t)g\mu(t)^{-1} \quad (\textrm{resp. } t\mapsto\mu(t)^{-1}g\mu(t))
\]
extends to a morphism of varieties $\AA_{k}^1\to G_{k}$. This construction yields a pair of parabolics $(P_+(\mu),P_{-}(\mu))$ in $G_k$ such that the intersection $P_+(\mu)\cap P_-(\mu)=L(\mu)$ is the centralizer of $\mu$. It is a common Levi subgroup of $P_+(\mu)$ and $P_-(\mu)$. Set $P = P_-(\mu)$, $Q = (P_+(\mu))^{(q)}$, $L = L(\mu)$ and $M = (L(\mu))^{(q)}$. Then the tuple $\Zcal_\mu \colonequals (G,P,L,Q,M,\varphi)$ is a zip datum, which we call the zip datum attached to the cocharacter datum $(G,\mu)$. We write simply $\GZip^\mu$ for $\GZip^{\Zcal_\mu}$. For simplicity, we will always consider zip data arising in this way from a cocharacter datum.

\subsubsection{Frames} \label{sec-frames}
In this paper, given a zip datum $\Zcal=(G,P,L,Q,M,\varphi)$, a frame for $\Zcal$ is a triple $(B,T,z)$ where $(B,T)$ is a Borel pair of $G_k$ defined over $\FF_q$ satisfying the following conditions
\begin{enumerate}[(i)]
    \item One has the inclusion $B\subset P$.
    \item $z\in W$ is an element satisfying the conditions
    \begin{equation}\label{eqBorel}
{}^z \! B \subset Q \quad \textrm{and} \quad
 B\cap M= {}^z \! B\cap M. 
\end{equation}
\end{enumerate}
\begin{rmk}\label{rmk-frame}
Let $(B,T)$ be a Borel pair defined over $\FF_q$ such that $B\subset P$. Then we can find $z\in W$ such that $(B,T,z)$ is a frame. This follows from the proof of \cite[Proposition 3.7]{Pink-Wedhorn-Ziegler-zip-data}.
\end{rmk}

A frame may not always exist. However, if $(G,\mu)$ is a cocharacter datum and $\Zcal_\mu$ is the associated zip datum (\S\ref{subsec-cochar}), then we can find a $G(k)$-conjugate $\mu'=\ad(g)\circ \mu$ (with $g\in G(k)$) such that $\Zcal_{\mu'}$ admits a frame. This follows easily from Remark \ref{rmk-frame} and the fact that $G$ is quasi-split over $\FF_q$. Hence, it is harmless to assume that a frame exists, and we will only consider a zip datum that admits a frame.

\begin{rmk}\label{rmkmuFp}
If the cocharacter $\mu$ is defined over $\FF_q$, then so are $P$ and $Q$. In particular, we have in this case $L=M$ and $P$, $Q$ are opposite parabolic subgroups with common Levi subgroup $L$.
\end{rmk}

For a zip datum $(G,P,L,Q,M,\varphi)$, we put 
$I = I_P \subset \Delta$. 
Note that $\Delta^P =\Delta \setminus I$.

\begin{lemma}[{\cite[Lemma 2.3.4]{Goldring-Koskivirta-zip-flags}}]\label{lem-framemu}
Let $\mu \colon \GG_{\mathrm{m},k}\to G_k$ be a cocharacter, and let $\Zcal_\mu$ be the attached zip datum. Assume that $(B,T)$ is a Borel pair defined over $\FF_q$ such that $B\subset P$. We put 
$z =\sigma(w_{0,I})w_0$. 
Then $(B,T,z)$ is a frame for $\Zcal_\mu$.
\end{lemma}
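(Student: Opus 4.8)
The plan is to verify directly the two conditions in the definition of a frame recalled in \S\ref{sec-frames}. The inclusion $B\subset P$ is part of the hypothesis, so everything reduces to checking that $z=\sigma(w_{0,I})w_0$ satisfies ${}^z\!B\subset Q$ and $B\cap M={}^z\!B\cap M$, i.e.\ the two assertions in \eqref{eqBorel}. The whole argument is a computation at the level of root systems, using that conjugating a Borel by a Weyl group element transforms its set of roots by that element; I would fix once and for all the conventions ${}^wB=\dot w B\dot w^{-1}$ and $\Phi({}^wB)=w\,\Phi(B)$, and write $\Phi_I=\Phi(L)$, $\Phi_I^{\pm}=\Phi_I\cap\Phi_{\pm}$.

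First I would record the relevant root data. Since $B$ and $T$ are defined over $\FF_q$, so is $B^+$; hence $\sigma$ permutes $\Delta$ and preserves $\Phi_+$, so $\sigma(w_{0,I})=w_{0,\sigma(I)}$ and $\Phi(M)=\sigma(\Phi(L))=\Phi_{\sigma(I)}$. Because $B\subset P=P_-(\mu)$ with Levi $L$ satisfying $W(L,T)=W_I$, one has $\Phi(P)=\Phi_{-}\cup\Phi_I^{+}$; the opposite parabolic $P_+(\mu)$ with respect to $L$ therefore satisfies $\Phi(P_+(\mu))=\Phi_{+}\cup\Phi_I^{-}$, and consequently $\Phi(Q)=\sigma(\Phi(P_+(\mu)))=\Phi_{+}\cup\Phi_{\sigma(I)}^{-}$. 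Next I would compute $\Phi({}^z\!B)=z\,\Phi(B)=z(\Phi_{-})$. Using $w_0(\Phi_{-})=\Phi_{+}$ together with the standard fact that $w_{0,I}$ sends $\Phi_I^{+}$ bijectively onto $\Phi_I^{-}$ and permutes $\Phi_{+}\setminus\Phi_I^{+}$, one gets $w_{0,I}(\Phi_{+})=(\Phi_{+}\setminus\Phi_I^{+})\sqcup\Phi_I^{-}$; applying $\sigma$ then yields $z(\Phi_{-})=(\Phi_{+}\setminus\Phi_{\sigma(I)}^{+})\sqcup\Phi_{\sigma(I)}^{-}$.

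Finally I would conclude. Comparing $\Phi({}^z\!B)=(\Phi_{+}\setminus\Phi_{\sigma(I)}^{+})\sqcup\Phi_{\sigma(I)}^{-}$ with $\Phi(Q)=\Phi_{+}\cup\Phi_{\sigma(I)}^{-}$ shows $\Phi({}^z\!B)\subset\Phi(Q)$, and since ${}^z\!B$ is a Borel subgroup and $Q$ a parabolic, both containing $T$, this gives ${}^z\!B\subset Q$. Intersecting with $\Phi(M)=\Phi_{\sigma(I)}$ gives $\Phi(B)\cap\Phi(M)=\Phi_{\sigma(I)}^{-}$ and, since $\Phi_{+}\setminus\Phi_{\sigma(I)}^{+}$ meets $\Phi_{\sigma(I)}$ trivially, also $\Phi({}^z\!B)\cap\Phi(M)=\Phi_{\sigma(I)}^{-}$; as $B\cap M$ and ${}^z\!B\cap M$ are Borel subgroups of $M$ containing the common maximal torus $T$ with the same set of roots, they coincide. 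This establishes \eqref{eqBorel} and hence the lemma. The computations are elementary; the only point requiring real care is bookkeeping with the sign convention of this paper, in which $\Phi_+$ consists of the roots of the Borel $B^+$ \emph{opposite} to the distinguished $B\subset P$, so that $P$ is the parabolic containing the negative Borel while $P_+(\mu)$ and $Q$ contain the positive one — keeping that straight, rather than any conceptual difficulty, is the main obstacle.
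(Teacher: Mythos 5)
Your verification is correct: the paper gives no proof of this lemma (it is quoted from the reference [GK19b, Lemma 2.3.4]), and your direct root-theoretic check of the two conditions in \eqref{eqBorel} is exactly the standard argument. The points that need care --- that $\sigma$ preserves $\Phi_+$ because the Borel pair is defined over $\FF_q$, that $\sigma(w_{0,I})=w_{0,\sigma(I)}$, that the roots of the Frobenius twists $Q$ and $M$ are obtained by applying $\sigma$, and the paper's sign convention making $\Phi(B)=\Phi_-$ and $\Phi(Q)=\Phi_+\cup\Phi^-_{\sigma(I)}$ --- are all handled correctly, and the final step (two Borel subgroups of $M$ containing $T$ with the same roots coincide) is sound.
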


\subsubsection{Parametrization of $E$-orbits} \label{subsec-zipstrata}
Recall that the group $E$ from \eqref{zipgroup} acts on $G_k$. 
We review below the parametrization of $E$-orbits following \cite{Pink-Wedhorn-Ziegler-zip-data}.

Assume that $\Zcal$ has a frame $(B,T,z)$. 
For $w\in W$, fix a representative $\dot{w}\in N_G(T)$, such that $(w_1w_2)^\cdot = \dot{w}_1\dot{w}_2$ whenever $\ell(w_1 w_2)=\ell(w_1)+\ell(w_2)$ (this is possible by choosing a Chevalley system, \cite[ XXIII, \S6]{SGA3}). For $w\in W$, define $G_w$ as the $E$-orbit of $\dot{w}\dot{z}^{-1}$. 
We note that $G_w$ is independent of 
the chooices of $\dot{w}$ and a frame 
by \cite[Proposition 5.8]{Pink-Wedhorn-Ziegler-zip-data}. 
If no confusion occurs, we write $w$ instead of $\dot{w}$. Define a twisted order on ${}^I W$ as follows. For $w,w'\in {}^I W$, write $w'\preccurlyeq w$ if there exists $w_1\in W_L$ such that $w'\leq w_1 w \sigma(w_1)^{-1}$. This defines a partial order on ${}^I W$ (\cite[Corollary 6.3]{Pink-Wedhorn-Ziegler-zip-data}).

\begin{theorem}[{\cite[Theorem 6.2, Theorem 7.5]{Pink-Wedhorn-Ziegler-zip-data}}] \label{thm-E-orb-param}
The map $w\mapsto G_w$ restricts to a bijection
\begin{equation} \label{orbparam}
{}^I W \to \{E \textrm{-orbits in }G_k \}.
\end{equation}
For $w\in {}^I W$, one has $\dim(G_w)= \ell(w)+\dim(P)$. Furthermore, for $w\in {}^I W$, the Zariski closure of $G_w$ is 
\begin{equation}\label{equ-closure-rel}
\overline{G}_w=\bigsqcup_{w'\in {}^IW,\  w'\preccurlyeq w} G_{w'}.
\end{equation}
\end{theorem}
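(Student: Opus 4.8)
The plan is to follow the strategy of Pink--Wedhorn--Ziegler. The core is a Bruhat-type decomposition of $G_k$ under the action of the zip group: one shows $G_k=\bigsqcup_{w\in{}^I W}G_w$, where $G_w=E\cdot(\dot w\dot z^{-1})$. For the surjectivity part, one begins from the ordinary Bruhat decomposition and uses the $E$-action together with a Lang-type input --- the map $L\to L$, $l\mapsto l^{-1}\varphi(l)$, is surjective because $L$ is connected, which is exactly what lets one absorb the ``Levi part'' of a general element and bring it to the normal form $\dot w\dot z^{-1}$; the index $w$ can be taken in ${}^I W$ because the residual ambiguity is twisted conjugation $w\mapsto w_1 w\,\sigma(w_1)^{-1}$ by $w_1\in W_L=W(L,T)$ (here the frame conditions on $z$ are what make this twist act as expected), and each such twisted class has a unique element of minimal length, which lies in ${}^I W$. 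For disjointness one attaches to an $E$-orbit a discrete invariant recovering $w$ --- the relative position of the parabolic $gQg^{-1}$ with respect to $P$, sharpened by the $\varphi$-twisted condition defining $E$ --- so that $G_w=G_{w'}$ for $w,w'\in{}^I W$ forces $w=w'$. (One may find it convenient to organize the argument through the smaller group $E\cap(B\times Q)$, in the spirit of the zip-flag formalism, but this is not essential.)

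For the dimension formula, the direct route is to compute the stabilizer. Writing $g=\dot w\dot z^{-1}$, one has $\Stab_E(g)\cong\{x\in P\cap gQg^{-1}:\varphi(\theta^P_L(x))=\theta^Q_M(g^{-1}xg)\}$; the dimension of $P\cap gQg^{-1}$ depends only on $w$ and the frame and is computed by a relative-position count, while imposing the $\varphi$-twisted relation cuts the dimension down by a controllable amount, the upshot being $\dim G_w=\dim E-\dim\Stab_E(g)=\ell(w)+\dim P$. One also checks, again via a Lang-type argument, that $\Stab_E(g)$ is smooth and connected, so that $G_w$ is a smooth irreducible locally closed subvariety of $G_k$ --- a fact needed for the closure relation. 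Alternatively one can induct on $\ell(w)$ through minimal parabolic subgroups, comparing $\dim G_{s_\alpha w}$ with $\dim G_w$ by a rank-one computation.

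The closure relation is the delicate point, and where I expect the main work to lie. For ``$\supseteq$'', i.e.\ $G_{w'}\subset\overline{G_w}$ when $w'\preccurlyeq w$: a general element of $G_w$ has ``Bruhat position'' ranging over the twisted class $\{w_1 w\,\sigma(w_1)^{-1}:w_1\in W_L\}$; degenerating such an element inside the closure of its Bruhat cell --- concretely, inside a rank-one subgroup $\langle U_\alpha,U_{-\alpha}\rangle$ whenever one can lower the length by $s_\alpha$ --- one lands in the orbit $G_{w'}$ indexed by the unique ${}^I W$-representative, and this realizes exactly the set $\{w'\in{}^I W:w'\preccurlyeq w\}$. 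For ``$\subseteq$'': $\overline{G_w}$ is an irreducible $E$-stable closed subset, hence a union of strata; since every point of $\overline{G_w}$ is a limit of points of $G_w$, its Bruhat position is $\leq$ one of the twisted positions above, so the stratum containing it is indexed by some $w'\preccurlyeq w$. The genuine obstacles are then purely combinatorial: one must verify that ``$w'\le w_1 w\,\sigma(w_1)^{-1}$ for some $w_1\in W_L$'' actually defines a partial order on ${}^I W$ --- antisymmetry is not apparent and requires a length estimate under twisted conjugation --- and that this order matches the incidence of the strata exactly. This interplay between the Bruhat order and Frobenius-twisted conjugation is the substance of \cite[Corollary~6.3, Theorems~6.2 and 7.5]{Pink-Wedhorn-Ziegler-zip-data}.
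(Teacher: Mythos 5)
The paper itself gives no proof of this theorem: it is imported verbatim from Pink--Wedhorn--Ziegler, so the only "internal proof" is the citation. Judged on its own terms, your outline names several correct ingredients (Bruhat decomposition, Lang's theorem, a stabilizer dimension count, degeneration inside rank-one subgroups for the closure relation), but the central mechanism you propose for the parametrization is wrong, and that is exactly where the content of the theorem lies.

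Concretely: you assert that the residual ambiguity after applying the $E$-action is twisted conjugation $w\mapsto w_1 w\,\sigma(w_1)^{-1}$ by $w_1\in W_L$, and that each twisted class has a unique minimal-length element lying in ${}^I W$. Both halves fail. Take $G=\GL_{3,\FF_q}$ split with $I=\{s_1\}$ (so $\sigma$ acts trivially on $W$): the $W_I$-conjugation orbits on $W$ are $\{1\}$, $\{s_1\}$, $\{s_2,\,s_1s_2s_1\}$, $\{s_1s_2,\,s_2s_1\}$ --- four classes, whereas ${}^I W=\{1,s_2,s_2s_1\}$ has three elements, and the class $\{s_1\}$ meets ${}^I W$ nowhere, its minimal element being $s_1\notin{}^I W$. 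So ``reduce modulo twisted conjugation and take the minimal representative'' cannot yield the bijection \eqref{orbparam}; if orbits were classified by twisted classes the count would already be wrong. Your proposed separating invariant has the same defect: the relative position of $P$ and ${}^gQ$ only records a double coset in $W_{I}\backslash W/W_{I_Q}$, and ${}^I W\to W_{I}\backslash W/W_{I_Q}$ is not injective once $I_Q\neq\emptyset$; the ``sharpening by the $\varphi$-twisted condition'' you invoke is precisely the missing argument. What Pink--Wedhorn--Ziegler actually do is an induction on the group: for each Bruhat position they construct a smaller zip datum on a Levi subgroup of a smaller group and identify the $E$-orbits in that position with the orbits of the smaller datum (Lang's theorem enters there, to prove orbital finiteness --- and note that your map $l\mapsto l^{-1}\varphi(l)$ goes from $L$ to $M=L^{(q)}$, not $L$ to $L$). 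Both existence of a representative $\dot w\dot z^{-1}$ with $w\in{}^I W$ and injectivity come out of this induction, not out of a normal form for twisted conjugation. For the closure relation you explicitly defer the hard points (that $\preccurlyeq$ is a partial order, and the inclusion $\supseteq$) back to the source; since the paper itself only cites these results that is acceptable as a citation, but as a proof your outline does not close.
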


Each $E$-orbit is locally closed in $G_k$. Since $E$ is smooth over $k$, all $E$-orbits are also smooth over $k$. However, the Zariski closure $\overline{G}_w$ of $G_w$ may have highly complicated singularities, see \cite{Koskivirta-Normalization} for a description of the normalization of $\overline{G}_w$. The closure of an $E$-orbit is a union of $E$-orbits, hence we obtain a stratification of $G$. 

In particular, there is a unique open $E$-orbit
$U_\Zcal\subset G_k$ corresponding to the longest element $w_{0,I}w_0\in {}^I W$ via \eqref{orbparam}. For an $E$-orbit $G_w$ (with $w\in {}^I W$), we write $\Xcal_w \colonequals [E\backslash G_w]$ for the corresponding locally closed substack of $\GZip^\Zcal=[E\backslash G_k]$. 

If $\Zcal$ arises from a cocharacter datum (\S\ref{subsec-cochar}), we write $U_\mu$ for $U_{\Zcal_\mu}$. Using the terminology pertaining to the theory of Shimura varieties, we call $U_\mu$ the \emph{$\mu$-ordinary stratum} of $\GZip^\mu$.  The corresponding substack $\Ucal_\mu \colonequals [E\backslash U_\mu]$ is called the \emph{$\mu$-ordinary locus}. It corresponds to the $\mu$-ordinary locus in the good reduction of Shimura varieties, studied for example in \cite{Wortmann-mu-ordinary}, \cite{Moonen-Serre-Tate}. For more details about Shimura varieties, we refer to \S\ref{subsec-Shimura} below. 

\subsection{Reminders about representation theory}\label{subsec-remind}

If $H$ is an algebraic group over a field $K$, denote by $\Rep(H)$ the category of algebraic representations of $H$ on finite-dimensional $K$-vector spaces. 
We will denote such a representation by $(V,\rho)$, or sometimes simply $\rho$ or $V$. 

Let $H$ be a split connected reductive $K$-group and choose a Borel pair $(B_H,T)$ defined over $K$. Irreducible representations of $H$ are in 1-to-1 correspondence with dominant characters $X^*_+(T)$. This bijection is given by the highest weight of a representation. 
For $\lambda\in X_{+}^*(T)$, let $\Lcal_\lambda$ be the line bundle attached to $\lambda$ on the flag variety $H/B_H$ by the usual associated sheaf construction (\cite[\S5.8]{jantzen-representations}). Define an $H$-representation $V_H(\lambda)$ by 
\begin{equation}\label{Vlambdadef}
    V_H(\lambda) \colonequals H^0(H/B_H,\Lcal_\lambda).
\end{equation}
In other words, $V_H(\lambda)$ is the induced representation $\Ind_{B_H}^{H} \lambda$. Then $V_H(\lambda)$ is a representation of highest weight $\lambda$. 
We view elements of $V_H(\lambda)$ as functions $f \colon H\to \AA^1$ satisfying the relation 
\begin{equation}\label{Vlambda-function}
f(hb)=\lambda(b^{-1})f(h), \quad \forall h\in H, \ \forall b\in B_H.    
\end{equation}
For dominant characters $\lambda,\lambda'$, there is a natural surjective map
\begin{equation}\label{Vlambdamap}
V_H(\lambda)\otimes V_H(\lambda')\to V_H(\lambda+\lambda').
\end{equation}
In the description given by \eqref{Vlambda-function}, this map is simply given by mapping $f\otimes f'$ (where $f\in V_H(\lambda)$, $f'\in V_H(\lambda')$) to the function $ff'\in V_H(\lambda+\lambda')$.

Denote by $W_H \colonequals W(H,T)$ the Weyl group and $w_{0,H}\in W_H$ the longest element. Then $V_H(\lambda)$ has a unique $B_H$-stable line, which is a weight space for the weight $w_{0,H}\lambda$.

\subsection{Vector bundles on the stack of $G$-zips} \label{sec-vector-bundles-gzipz}

\subsubsection{General theory} \label{sec-genth-vb}
For an algebraic stack $\Xcal$, write $\VB(\Xcal)$ for the category of vector bundles on $\Xcal$. Let $X$ be a $k$-scheme and $H$ an affine $k$-group scheme acting on $X$. If $\rho \colon H\to \GL(V)$ is a finite dimensional algebraic representation of $H$, it gives rise to a vector bundle $\mathcal{V}_{H,X}(\rho)$ on the stack $[H\backslash X]$. This vector bundle can be defined geometrically as $[H\backslash (X\times_k V)]$ where $H$ acts diagonally on $X\times_k V$. We obtain a functor 
\[
\mathcal{V}_{H,X} \colon \Rep(H)\to \VB([H\backslash X]).
\]
In particular, similarly to the usual associated sheaf constrution \cite[I.5.8.(1)]{jantzen-representations}, the space of global sections $H^0([H\backslash X],\mathcal{V}_{H,X}(\rho))$ is identified with: 
\begin{equation}\label{globquot}
H^0([H\backslash X],\mathcal{V}_{H,X}(\rho))=\left\{f \colon X\to V \mid f(h \cdot x)=\rho(h) f(x) ,  \quad \forall h\in H, \ \forall x\in X\right\}.
\end{equation}

\subsubsection{Automorphic Vector bundles on $\GZip^\Zcal$}\label{sec-VBGzip}
Fix a zip datum $\Zcal=(G,P,L,Q,M,\varphi)$ and a frame $(B,T,z)$ as usual. By the previous paragraph, we obtain a functor $\mathcal{V}_{E,G} \colon \Rep(E)\to \VB(\GZip^\Zcal)$, that we simply denote by $\mathcal{V}$. For $(V,\rho)\in \Rep(E)$, the space of global sections of $\mathcal{V}(\rho)$ is 
\[
H^0(\GZip^\Zcal,\mathcal{V}(\rho))=\left\{f \colon G_k \to V \mid  f(\epsilon \cdot g)=\rho(\epsilon) f(g) , \quad \forall\epsilon\in E, \ \forall g\in G_k \right\}.
\]
One has the following easy lemma, which follows from the fact that $G_k$ admits an open dense $E$-orbit (see discussion below Theorem \ref{thm-E-orb-param}).

\begin{lemma}[{\cite[Lemma 1.2.1]{Koskivirta-automforms-GZip}}]
Let $(V,\rho)$ be an $E$-representation. Then we have $\dim H^0(\GZip^\Zcal,\mathcal{V}(\rho)) \leq \dim (V)$.
\end{lemma}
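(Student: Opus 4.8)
The plan is to use the existence of the dense open $E$-orbit $U_\Zcal \subset G_k$ recorded after Theorem~\ref{thm-E-orb-param} (the one corresponding to $w_{0,I}w_0 \in {}^IW$). By the description \eqref{globquot}, a global section of $\mathcal{V}(\rho)$ is nothing but an $E$-equivariant morphism $f \colon G_k \to V$, and the point is that such an $f$ is completely determined by its value at a single point of the open orbit.

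Concretely, I would fix a point $x \in U_\Zcal$ — for instance $x = \dot{w}\dot{z}^{-1}$ with $w = w_{0,I}w_0$ — and consider the evaluation map
\[
\mathrm{ev}_x \colon H^0(\GZip^\Zcal,\mathcal{V}(\rho)) \longrightarrow V, \qquad f \longmapsto f(x).
\]
The claim is that $\mathrm{ev}_x$ is injective, which immediately gives $\dim H^0(\GZip^\Zcal,\mathcal{V}(\rho)) \le \dim(V)$. To prove injectivity, suppose $f(x) = 0$. By $E$-equivariance, $f(\epsilon \cdot x) = \rho(\epsilon)f(x) = 0$ for all $\epsilon \in E$, so $f$ vanishes identically on the orbit $E\cdot x = U_\Zcal$. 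Since $G$ is connected reductive, $G_k$ is integral, and $U_\Zcal$ is a nonempty open (hence schematically dense) subscheme; as $f$ is a morphism from $G_k$ to the separated scheme $V \cong \AA^{\dim(V)}_k$ that vanishes on a dense open, it vanishes on all of $G_k$, i.e. $f = 0$.

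Equivalently — and this reformulation is the one that will be convenient for the sharper results later — one can restrict sections to the open substack $\Xcal_{w_{0,I}w_0} = [E\backslash U_\Zcal]$. A section of a vector bundle on $\GZip^\Zcal$ vanishing on this dense open substack vanishes everywhere (same density argument), so restriction is injective on global sections; and since $U_\Zcal$ is a single $E$-orbit, $\Xcal_{w_{0,I}w_0} \cong [\Stab_E(x)\backslash \spec k]$, whose space of global sections of the bundle attached to $(V,\rho)$ is the invariant subspace $V^{\Stab_E(x)} \subseteq V$. Either way the bound follows.

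There is no genuine obstacle here: the lemma is a soft consequence of the density of the $\mu$-ordinary stratum. The only step deserving a word of care is the implication ``vanishes on a dense open $\Rightarrow$ vanishes everywhere'', which uses that $G_k$ is reduced and irreducible (automatic since $G$ is connected reductive) together with separatedness of the target $V$; neither hypothesis is problematic.
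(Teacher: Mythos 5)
Your argument is correct and is essentially the paper's own: the lemma is stated there precisely as a consequence of the existence of the dense open $E$-orbit, with sections identified via \eqref{globquot} as $E$-equivariant maps $G_k\to V$ determined by their value at a single point of that orbit. Your reformulation via $[E\backslash U_\Zcal]\cong[\Stab_E(x)\backslash \spec k]$ also matches how the paper later sharpens this bound to $V^{L_\varphi}$ in Corollary \ref{cor-global-muord}.
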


The first projection $p_1 \colon E\to P$ induces a functor $p_1^* \colon \Rep(P)\to \Rep(E)$. If $(V,\rho)\in \Rep(P)$, we write again $\mathcal{V}(\rho)$ for $\mathcal{V}(p_1^*(\rho))$. 
Let $\VB_P(\GZip^\Zcal)$ be the essentail image of 
$\mathcal{V} \colon \Rep(P)\to \VB(\GZip^\Zcal)$. 
We call $\VB_P(\GZip^\Zcal)$ 
the category of automorphic vector bundles (\cf \cite[III. Remark 2.3]{Milne-ann-arbor}).  
The goal of this paper is to study the vector bundles $\mathcal{V}(\rho)$ on $\GZip^\Zcal$ and determine their properties for $\rho \in \Rep (P)$. In particular, we seek to understand the properties of $\mathcal{V}(\rho)$ in terms of the representation $(V,\rho)$ defining it.

\subsubsection{$L$-representations}\label{sec-Lreps}

Let $\theta^P_L \colon P\to L$ denote again the natural projection modulo the unipotent radical $R_{\mathrm{u}}(P)$, as in \S\ref{subsec-zipdatum}. It induces by composition a functor 
\begin{equation}
(\theta^P_L)^* \colon \Rep(L)\to \Rep(P).  
\end{equation}
It is easy to see that $(\theta^P_L)^*$ is a fully faithful functor, and its image is the full subcategory of $\Rep(P)$ of $P$-representations which are trivial on $R_{\mathrm{u}}(P)$. Hence, we view $\Rep(L)$ as a full subcategory of $\Rep(P)$. If $(V,\rho)\in \Rep(L)$, we write again $\mathcal{V}(\rho) \colonequals \mathcal{V}((\theta^P_L)^*(\rho))$. For $\lambda\in X^*_{+,I}(T)$, write $B_L \colonequals B\cap L$ and 
define an $L$-representation as 
\begin{equation}\label{equ-Vlambda}
V_I (\lambda)=\Ind_{B_L}^L(\lambda). 
\end{equation}
This is the representation defined in \eqref{Vlambdadef} for $H=L$ and $B_H=B_L$. Denote by $\mathcal{V}_I (\lambda)$ the vector bundle on $\GZip^\Zcal$ attached to $V_I (\lambda)$. We call $\Vcal_I(\lambda)$ the \emph{automorphic vector bundle associated to the weight $\lambda$} on $\GZip^\Zcal$. This terminology stems from Shimura varieties (see \S\ref{subsec-Shimura} below for further details). 
Note that if $\lambda\in X^*(T)$ is not $L$-dominant, then $V_I (\lambda)=0$ and hence $\mathcal{V}_I (\lambda)=0$. 
In \cite{Koskivirta-automforms-GZip}, the second author studied the vector bundles $\mathcal{V}_I (\lambda)$ on $\GZip^\Zcal$. In particular, he investigated the question of determining the set $C_{\zip}$ of characters $\lambda\in X_{+,I}^*(T)$ such that the space $H^0(\GZip^\Zcal,\mathcal{V}_I (\lambda))$ is non-zero. In a work in progress \cite{Goldring-Imai-Koskivirta-weights} with Goldring, we completely determine $C_{\zip}$ under the condition that $P$ is defined over $\FF_q$ and the Frobenius $\sigma$ acts on $I$ by $-w_{0,I}$.

\subsection{Shimura varieties}\label{subsec-Shimura}

In this subsection, we explain the link between the stack of $G$-zips and Shimura varieties. 
Let $\gx$ be a Shimura datum \cite[2.1.1]{Deligne-Shimura-varieties}. In particular, $\mathbf{G}$ is a connected reductive group over $\QQ$. Furthermore, $\mathbf{X}$ provides a well-defined $\mathbf{G}(\overline{\QQ})$-conjugacy class $\{\mu\}$ of cocharacters of $\mathbf{G}_{\overline{\QQ}}$. Write $\mathbf{E}=\egx$ for the reflex field of $\gx$ (i.e. the field of definition of $\{\mu\}$) and $\Ocal_\mathbf{E}$ for its ring of integers. 
Given an open compact subgroup $K \subset \gofaf$, write $\shgx_{K}$ for the canonical model at level $K$ over $\mathbf{E}$ (\cf  \cite[2.2]{Deligne-Shimura-varieties}). For $K$ small enough in $\mathbf{G}(\AA_f)$, $\shgx_K$ is a smooth, quasi-projective scheme over $\mathbf{E}$. 
For a small enough $K$, every inclusion $K' \subset K$ induces a finite \'{e}tale projection $\pi_{K'/K} \colon \shgx_{K'} \to \shgx_{K}$.

Let $g\geq 1$ and let $(V,\psi)$ be a $2g$-dimensional, non-degenerate symplectic space over $\QQ$. Write $\GSp(2g)=\GSp(V, \psi)$ for the group of symplectic similitudes of $(V,\psi)$. Write $\mathbf{X}_g$ for the double Siegel half-space \cite[1.3.1]{Deligne-Shimura-varieties}. The pair $\shdagsp$ is called the Siegel Shimura datum and has reflex field $\QQ$. Recall that $\gx$ is of Hodge type if there exists an embedding of Shimura data $\iota \colon \gx \hookrightarrow \shdagsp$ for some $g \geq 1$. Henceforth, assume $\gx$ is of Hodge-type.

Fix a prime number $p$, and assume that the level $K$ is of the form $K=K_pK^p$ where $K_p\subset \mathbf{G}(\QQ_p)$ is a hyperspecial subgroup and $K^p\subset \mathbf{G}(\AA_f^p)$ is an open compact subgroup. 
Recall that a hyperspecial subgroup of $\mathbf{G}(\QQ_p)$ exists if and only if $\mathbf{G}_{\QQ_p}$ is unramified, and is of the form $K_p=\mathscr{G}(\ZZ_p)$ where $\mathscr{G}$ is a reductive group over $\ZZ_p$ such that $\mathscr{G}\otimes_{\ZZ_p}\QQ_p\simeq \mathbf{G}_{\QQ_p}$ and $\mathscr{G}\otimes_{\ZZ_p}\FF_p$ is connected.

We assume that $p >2$. 
For any place $v$ above $p$ in $\mathbf{E}$, Kisin (\cite{Kisin-Hodge-Type-Shimura}) and Vasiu (\cite{Vasiu-Preabelian-integral-canonical-models})  constructed a family of smooth $\Ocal_{\mathbf{E}_v}$-schemes $\Sscr=(\Sscr_K)_{K^p}$, where $K=K_pK^p$ and $K^p$ is a small enough compact open subgroup of $\mathbf{G}(\AA_f^p)$. For $K'^p\subset K^p$, one has again a finite \'{e}tale projection $\pi_{K'/K} \colon \Sscr_{K_p K'^p}\to \Sscr_{K_pK^p}$, where $K=K_pK^p$ and $K'=K_pK'^p$, 
and the tower $\Sscr=(\Sscr_K)_{K^p}$ is an $\Ocal_{\mathbf{E}_v}$-model of the tower $(\shgx_{K})_{K^p}$. 
We write $S_K$ for the geometric special fiber of $\Sscr_K$.

We take a representative $\mu\in \{\mu\}$ defined over $\mathbf{E}_v$ by \cite[(1.1.3) Lemma (a)]{Kottwitz-Shimura-twisted-orbital}. 
We can also assume that $\mu$ extends to $\mu \colon \GG_{\mathrm{m},\Ocal_{\mathbf{E}_v}}\to \mathscr{G}_{\Ocal_{\mathbf{E}_v}}$ (\cite[Corollary 3.3.11]{Kim-Rapoport-Zink-uniformization}). 
Denote by $\mathbf{L} \subset \mathbf{G}_{\mathbf{E}_v}$ the centralizer of the cocharacter $\mu$. We take a 
parabolic sugroups $\mathbf{P}$ of $\mathbf{G}_{\mathbf{E}_v}$,  which has $\mathbf{L}$ as a Levi subgroup. 
Since $\mathbf{G}_{\QQ_p}$ is unramified, it is quasi-split, hence we can choose a Borel subgroup $\mathbf{B}\subset \mathbf{G}_{\QQ_p}$ and a maximal torus $\mathbf{T}\subset \mathbf{B}$. 
There is $g \in \mathbf{G}(\mathbf{E}_v)$ such that 
$\mathbf{B}_{\mathbf{E}_v} \subset g \mathbf{P} g^{-1}$. 
Write $g=b g_0$ with $b \in B(\mathbf{E}_v)$ and $g_0 \in \mathscr{G} (\Ocal_{\mathbf{E}_v})$ by the Iwasawa decomposition. Then replacing $\mu$ by its conjugate by $g_0$, we may assume that 
$\mathbf{B}_{\mathbf{E}_v} \subset \mathbf{P}$. 

By properness of the scheme of parabolic subgroups of $\mathscr{G}$ (\cite[Expos\'{e} XXVI, Corollaire 3.5]{SGA3}), the subgroups $\mathbf{B}$ and $\mathbf{P}$ extend uniquely to subgroups $\mathscr{B} \subset \mathscr{G}$ over $\ZZ_p$ and $\mathscr{P} \subset \mathscr{G}_{\Ocal_{\mathbf{E}_v}}$ over $\Ocal_{\mathbf{E}_v}$ respectively. 
Let $\mathscr{L}\subset \mathscr{P}$ be the centralizer of 
$\mu \colon \GG_{\mathrm{m},\Ocal_{\mathbf{E}_v}}\to \mathscr{G}_{\Ocal_{\mathbf{E}_v}}$. 
We take a Borel subgroup $\mathbf{B}^{\mathrm{op}}$ of $\mathbf{G}_{\QQ_p}$ such that $\mathbf{T}=\mathbf{B} \cap \mathbf{B}^{\mathrm{op}}$. 
The subgroup $\mathbf{B}^{\mathrm{op}}$ extends uniquely to a subgroup $\mathscr{B}^{\mathrm{op}} \subset \mathscr{G}$ over $\ZZ_p$. 
We put 
$\mathscr{T} =\mathscr{B} \cap \mathscr{B}^{\mathrm{op}}$. 
Set $G = \mathscr{G} \otimes_{\ZZ_p} \FF_p$ and denote by $B, T, P, L$ the geometric  special fiber of $\mathscr{B}, \mathscr{T}, \mathscr{P}, \mathscr{L}$ respectively. By slight abuse of notation, we denote again by $\mu$ its mod $p$ reduction $\mu \colon \GG_{\mathrm{m},k}\to G_k$. Then $(G,\mu)$ is a cocharacter datum, and it yields a zip datum $(G,P,L,Q,M,\varphi)$ as in \S\ref{subsec-cochar} (since $G$ is defined over $\FF_p$, in the context of Shimura varieties, we always take $q=p$, hence $\varphi$ is the $p$-th power Frobenius).

By a result of Zhang (\cite[4.1]{Zhang-EO-Hodge}), there exists a natural smooth morphism
\begin{equation}\label{zeta-Shimura}
\zeta \colon S_K\to \GZip^\mu. 
\end{equation}
This map is also surjective by \cite[Corollary 3.5.3(1)]{Shen-Yu-Zhang-EKOR}. 
The map $\zeta$ amounts to the existence of a universal $G$-zip $\underline{\mathcal{I}}=(\mathcal{I},\mathcal{I}_P,\mathcal{I}_Q,\iota)$ over $S_K$, using the description of $\GZip^\mu$ provided at the end of \S\ref{subsec-zipdatum}. In the construction of Zhang, the $G_k$-torsor $\mathcal{I}$ and the $P$-torsor $\mathcal{I}_P$ over $S_K$ are actually the reduction of a $\mathscr{G}$-torsor and a $\mathscr{P}$-torsor over $\Sscr_{K}$, that we denote by $\mathscr{I}$ and $\mathscr{I}_\mathscr{P}$ respectively.

\begin{example}
We explain the example of the Siegel-type Shimura variety. In this case, one has $\mathbf{G}=\GSp(V,\psi)$ for a symplectic space $(V,\psi)$ of dimension $2g$ ($g\geq 1$) over $\QQ$. The $\ZZ_p$-model $\mathscr{G}=\GSp(\Lambda,\psi)$ is given by a self-dual $\ZZ_p$-lattice $\Lambda\subset V_{\QQ_p}$, i.e. a lattice satisfying $\Lambda^\vee=\Lambda$, where $\Lambda^\vee \colonequals \{x\in V_{\QQ_p} \mid \forall y\in \Lambda, \ \psi (x, y) \in \ZZ_p\}$. The cocharacter $\mu \colon \GG_{\mathrm{m},\ZZ_p}\to \mathbf{G}_{\ZZ_p}$ induces a decomposition $\Lambda=\Lambda_0\oplus \Lambda_1$, where $\Lambda_0$, $\Lambda_1$ are free $\ZZ_p$-modules of rank $g$. Here $z\in \GG_{\mathrm{m}}$ acts via $\mu$ on $\Lambda_i$ by the character $z\mapsto z^i$ for $i\in \{0,1\}$. Define two filtrations
\begin{align*}
    \Fil_0(\Lambda)  \colon & \ 0\subset \Lambda_0 \subset \Lambda \quad \textrm{and} \\
    \Fil_1(\Lambda)  \colon & \ 0\subset \Lambda_1 \subset \Lambda.
\end{align*}
Then $\mathscr{P}$ can be defined as the parabolic subgroup of $\mathscr{G}$ stabilizing $\Fil_0(\Lambda)$. The scheme $\Sscr_{K}$ (with $K=K_pK^p$ and $K_p=\mathscr{G}(\ZZ_p)$ as above) is a moduli space classifying triples $(A,\xi, \eta K^p)$ where $A$ is an abelian variety of rank $g$ endowed with a principal polarization $\xi$, and a $K^p$-level structure $\eta K^p$. Here $\eta$ is a symplectic isomorphism $H^1(A,\AA^p)\simeq V\otimes \AA^p$ and $\eta K^p$ is its $K^p$-coset in the set of such isomorphisms.

Let $\mathscr{A}\to \Sscr_{K}$ denote the universal abelian scheme. Then $\mathscr{H} \colonequals H^1_{\dR}(\mathscr{A}/\Sscr_K)$ is a rank $2g$ vector bundle on $\Sscr_K$, and the principal polarization $\xi$ induces on $\mathscr{H}$ a perfect, symplectic pairing, that we denote by $\psi_\xi$. The vector bundle $\mathscr{H}$ also carries a natural Hodge filtration (that we denote by $\Fil_{\hdg}$):
\begin{equation}
    0\subset \Omega_{\mathscr{A}/\Sscr_K}\subset \mathscr{H}
\end{equation}
where $\Omega_{\mathscr{A}/\Sscr_K}$ is the push-forward of the sheaf of relative K\"{a}hler differentials $\Omega^1_{\mathscr{A} / \Sscr_K}$ by the structural morphism $f \colon \mathscr{A} \to \Sscr_K$. It is a rank $g$-subbundle of $\mathscr{H}$. We obtain a $\mathscr{G}$-torsor $\mathscr{I}$ and a $\mathscr{P}$-torsor $\mathscr{I}_\mathscr{P}$ over $\Sscr_K$ as follows: For an $\Sscr_K$-scheme $S$, we 
define $\mathscr{I}(S)$ by 
\begin{align*}
\underline{\Isom}_{\Ocal_S}\left((\Lambda\otimes \Ocal_S,\psi) , (\mathscr{H}\otimes_{\Ocal_{\Sscr_K}}\! \Ocal_S, \psi_\xi)\right), 
\end{align*}
and $\mathscr{I}_\mathscr{P}(S)$ by 
\begin{align*}
\underline{\Isom}_{\Ocal_S}\left((\Lambda\otimes \Ocal_S, \psi, \Fil_0(\Lambda)\otimes \Ocal_S) ,  (\mathscr{H}\otimes_{\Ocal_{\Sscr_K}}\! \Ocal_S,  \psi_\xi,  \Fil_{\hdg} \otimes_{\Ocal_{\Sscr_K}}\! \Ocal_S ) \right).
\end{align*}
This defines two fppf sheaves on $\Sscr_K$. Furthermore $\mathscr{G}$ acts naturally on $\mathscr{I}$ via its action on $\Lambda$. Furthermore, since the parabolic group $\mathscr{P}\subset \mathscr{G}$ stabilizes $\Fil_0(\Lambda)$, the group $\mathscr{P}$ acts naturally on $\mathscr{I}_{\mathscr{P}}$. This defines respectively a $\mathscr{G}$-torsor and a $\mathscr{P}$-torsor on $\Sscr_K$.

Over $S_K=\Sscr_K\otimes \FF_p$, the $G$-zip $\underline{\mathcal{I}}=(\mathcal{I},\mathcal{I}_P,\mathcal{I}_Q,\iota)$ is defined as follows. First define $\mathcal{I}$ and $\mathcal{I}_P$ to be the base change to $S_K$ of $\mathscr{I}$ and $\mathscr{I}_\mathscr{P}$. To define the $Q$-torsor $\mathcal{I}_Q$, recall that $H \colonequals H^1_{\dR}(A/S_K)$ admits a conjugate filtration $\Fil_{\conj}\subset H$: 
Let $f\colon A\to S_K$ denote the universal abelian scheme (with $A \colonequals \mathscr{A}\otimes_{\Sscr_K} S_K$), there is a conjugate spectral sequence $E_2^{ab}=R^af_{*}(\mathcal{H}^b(\Omega^{\bullet}_{A/S_K}))\Rightarrow H^{a+b}_{\dR}(A/S_K)$. For abelian varieties, this spectral sequence degenerates and gives the filtration $\Fil_{\conj}$ on $H^{1}_{\dR}(A/S_K)$. Note that the conjugate filtration only exists on the special fiber of $\Sscr_K$, contrary to the Hodge filtration. 
For an $S_K$-scheme $S$, we put 
\begin{equation}
\mathcal{I}_Q(S) = \underline{\Isom}_{\Ocal_S}\left((\Lambda\otimes \Ocal_S, \psi, \Fil_1(\Lambda) \otimes \Ocal_S)  ,  (H\otimes_{\Ocal_{S_K}}\! \Ocal_S, \psi_\xi, \Fil_{\conj} \otimes_{\Ocal_{S_K}}\! \Ocal_S ) \right).
\end{equation}
Since $Q$ stabilizes the filtration $\Fil_1(\Lambda)\otimes \FF_p$, it acts naturally on $\mathcal{I}_Q$, and again we obtain a $Q$-torsor on $S_K$. Finally, the isomorphism $\iota \colon (\mathcal{I}_P/R_{\mathrm{u}}(P))^{(p)}\to \mathcal{I}_Q/R_{\mathrm{u}}(Q)$ is naturally induced by the Frobenius and Verschiebung homomorphisms (or more generally, the Cartier isomorphism, see \cite[7.3]{Moonen-Wedhorn-Discrete-Invariants}).
\end{example}

For each $\mathbf{L}$-dominant character $\lambda\in X^*(\mathbf{T})$, we have the unique irreducible representation $\mathbf{V}_{I}(\lambda)$ of $\mathbf{P}$ over $\overline{\QQ}_p$ of highest weight $\lambda$. Since we are in characteristic zero, $\mathbf{V}_{I}(\lambda)$ coincides with  $H^0(\mathbf{P}/\mathbf{B},\Lcal_\lambda)$, as defined in \eqref{Vlambdadef} in \S \ref{subsec-remind}. It admits a natural model over $\overline{\ZZ}_p$, namely
\begin{equation}
\mathbf{V}_{I}(\lambda)_{\overline{\ZZ}_p} \colonequals H^0(\mathscr{P}/\mathscr{B},\Lcal_\lambda), 
\end{equation} 
where $\Lcal_\lambda$ is the line bundle attached to 
$\lambda$ viewed as a character of $\mathscr{T}$. 
Its reduction modulo $p$ is the $P$-representation $V_I (\lambda)=H^0(P/B,\Lcal_\lambda)$ over $k=\overline{\FF}_p$. Since $\Sscr_K$ is endowed naturally with a $\mathscr{P}$-torsor $\mathscr{I}_\mathscr{P}$, we obtain a vector bundle $\mathscr{V}_{I}(\lambda)$ on $\Sscr_K$ by applying the $\mathscr{P}$-representation $\mathbf{V}_{I}(\lambda)_{\overline{\ZZ}_p}$ to $\mathscr{I}_\mathscr{P}$. 
The vector bundle $\mathscr{V}_{I}(\lambda)$ for $\lambda\in X^*(\mathbf{T})_{+,I}$ is called \emph{the automorphic vector bundles associated to the weight $\lambda$}. For an $\Ocal_{\mathbf{E}_v}$-algebra $R$, the space $H^0(\Sscr_K\otimes_{\Ocal_{\mathbf{E}_v}} R, \mathscr{V}_{I}(\lambda))$ may be called the space of automorphic forms of level $K$ and weight $\lambda$ with coefficients in $R$. More generally, by the same formalism, we have a commutative diagram of functors
$$\xymatrix{
\Rep_{\overline{\ZZ}_p}(\mathscr{P}) \ar[r]^{\Vscr} \ar[d] & \VB(\Sscr_{K}) \ar[d] \\
\Rep_{\overline{\FF}_p}(P) \ar[r]^{\Vcal} & \VB(S_K)
}$$
where the vertical arrows are reduction modulo $p$ and the horizontal arrows are obtained by applying the $\mathscr{P}$-torsor $\mathscr{I}_{\mathscr{P}}$ and the $P$-torsor $\mathcal{I}_P$ respectively. The vector bundles obtained in this way on $\Sscr_K$ and $S_K$ are called \emph{automorphic vector bundles} following \cite[III. Remark 2.3]{Milne-ann-arbor}.

Furthermore, the map $\zeta  \colon  S_K \to \GZip^\mu$ induces a factorization of the lower horizontal arrow of the above diagram as 
\begin{equation}\label{equ-Shimura-functor}
\xymatrix@1{
\Rep_{\overline{\FF}_p}(P) \ar[r]^-{\mathcal{V}} & \VB(\GZip^\mu) \ar[r]^-{\zeta^*} & \VB(S_K).
}    
\end{equation}
Note also that for any $P$-representation $(V,\rho)$, the map $\zeta  \colon  S_K \to \GZip^\mu$ induces by pull-back a natural injective morphism
\begin{equation}
    H^0(\GZip^\mu, \mathcal{V}(\rho))\to H^0(S_K,\mathcal{V}(\rho)).
\end{equation}
In \S \ref{sec-H0}, we determine the space $H^0(\GZip^\mu, \mathcal{V}(\rho))$ in all generality (i.e. even for cocharacter data $(G,\mu)$ that are not attached to Shimura varieties). For general pairs $(G,\mu)$ with $\mu$ minuscule (but not necessarily attached to Shimura varieties), one has the following remark:

\begin{rmk}\label{rem-shtukas}
Let $F$ be a local field with ring of integers $\Ocal$ and residue field $\FF_q$. Let $G$ be an unramified reductive group over $\Ocal$. Let $(B,T)$ be a Borel pair of $G$, and let $\mu$ be a dominant cocharacter of $G$. Then Xiao--Zhu define the moduli of local shtukas $\Sht^{\loc}_\mu$ classifying modifications bounded by $\mu$ of a $G$-torsor and its Frobenius twist (see \cite[Definition 5.2.1]{Xiao-Zhu-geometric-satake}). Similarly, there is a moduli $\Sht^{\loc(m,n)}_\mu$ of restricted local shtuka (\cite[\S 5.3]{Xiao-Zhu-geometric-satake}), with a natural projection $\Sht^{\loc}_\mu\to \Sht^{\loc(m,n)}_\mu$. In the case when $\mu$ is minuscule, Xiao--Zhu show in \cite[Lemma 5.3.6]{Xiao-Zhu-geometric-satake} that there exists a natural perfectly smooth morphism $\Sht^{\loc(2,1)}_\mu\to \GZip^{\mu,\pf}$, where $\pf$ denotes the perfection and the special fiber of $G$ is again denoted by $G$ (see \S\ref{subsec-perfect} for further details).

\end{rmk}

\section{The space of global sections $H^0(\GZip^\mu, \mathcal{V}(\rho))$}\label{sec-H0}

\subsection{Adapted morphisms}\label{sec-adap}

To determine the space $H^0(\GZip^\mu, \mathcal{V}(\rho))$ (for $(V,\rho)$ a $P$-representation), we use a similar method as in \cite[\S3.2]{Koskivirta-automforms-GZip}, where we studied representations of the type $V_I (\lambda)$. We review some of the notions introduced in \loccit

Let $X$ be an irreducible normal $k$-variety and let $U\subset X$ be an open subset such that $S=X\setminus U$ is irreducible of codimension $1$. For $f\in H^0(U,\Ocal_X)$, denote by $Z_U(f)\subset U$ the vanishing locus of $f$ in $U$ and let $\overline{Z_U(f)}$ be its Zariski closure in $X$. We endow all locally closed subsets of schemes with the reduced structure. Let $Y$ be an irreducible $k$-variety and $\psi \colon Y\to X$ be a $k$-morphism. 
\begin{definition}\label{defadap}
We say that $\psi$ is adapted to $f$ (with respect to $U$) if
\begin{enumerate}[(i)]
\item \label{adap1} $\psi(Y)\cap U\neq \emptyset$, and
\item \label{adap2} $\psi(Y)\cap S$ is not contained in $\overline{Z_U(f)}$.
\end{enumerate}
\end{definition}

\begin{lemma}\label{exadap}
 If $\psi(Y)$ intersects $U$ and $\psi(Y)\cap S$ is dense in $S$, then $\psi$ is adapted to any nonzero section $f\in H^0(U,\Ocal_X)$.
\end{lemma}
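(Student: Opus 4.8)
The plan is to check the two conditions (i) and (ii) of Definition~\ref{defadap} in turn. Condition (i), that $\psi(Y)\cap U\neq\emptyset$, is literally one of the hypotheses, so there is nothing to prove there. Hence the entire content of the lemma is condition (ii): we must show that $\psi(Y)\cap S$ is not contained in $\overline{Z_U(f)}$.

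I would argue by contradiction, assuming $\psi(Y)\cap S\subseteq\overline{Z_U(f)}$. Since $\overline{Z_U(f)}$ is closed in $X$ and, by hypothesis, $\psi(Y)\cap S$ is dense in $S$, passing to closures yields $S\subseteq\overline{Z_U(f)}$. Now comes the one substantive point, a dimension count. Because $X$ is an irreducible (hence integral) variety and $U$ a nonempty open subset, $U$ is integral with $\dim U=\dim X$, and $f\in H^0(U,\Ocal_X)\subseteq k(X)$ is a nonzero regular function; therefore $Z_U(f)$ is a proper closed subset of $U$, so $\dim\overline{Z_U(f)}=\dim Z_U(f)\leq\dim X-1$. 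On the other hand $S$ is irreducible of codimension $1$, that is, $\dim S=\dim X-1$. Since $S$ is irreducible and contained in $\overline{Z_U(f)}$, it lies in some irreducible component $C$ of $\overline{Z_U(f)}$; then $\dim X-1=\dim S\leq\dim C\leq\dim X-1$, so $\dim C=\dim S$ and, both being irreducible, $C=S$. Thus $S$ is itself an irreducible component of $\overline{Z_U(f)}$.

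To finish, I would use that $Z_U(f)$ is dense in its closure $\overline{Z_U(f)}$: consequently every irreducible component of $\overline{Z_U(f)}$ meets $Z_U(f)$, and in particular meets $U$. Applying this to the component $S$ gives $S\cap U\neq\emptyset$, which is absurd since $S=X\setminus U$. This contradiction establishes (ii), and the lemma follows.

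I do not anticipate a genuine obstacle; the statement is elementary. The only place to be mildly careful is the step isolating $S$ as a component of $\overline{Z_U(f)}$: one should resist comparing $\overline{Z_U(f)}$ with $U$ directly, since the closure may well pick up new points lying in $S$, and instead combine the dimension bound on $\overline{Z_U(f)}$ with the density of $Z_U(f)$ inside it. It is also worth noting that the normality hypothesis on $X$ plays no role in this particular lemma; it is needed only for the surrounding notion of adaptedness.
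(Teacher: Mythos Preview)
Your proof is correct and follows essentially the same route as the paper's: both reduce condition~(ii) to showing that $S\not\subseteq\overline{Z_U(f)}$ via the codimension bound $\dim\overline{Z_U(f)}\leq\dim X-1$ together with the fact that every component of $\overline{Z_U(f)}$ meets $U$. The paper's version is simply terser, phrasing the conclusion as ``$\overline{Z_U(f)}\cap S$ has codimension $\geq 1$ in $S$''; your explicit isolation of $S$ as a putative component and the density argument make the same point with more detail.
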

\begin{proof}
We need to show that the consition \eqref{adap2} is satisfied. We may assume that $Z_U(f)\neq \emptyset$. Then, the closed subset $\overline{Z_U(f)}$ has codimension 1 in $X$ and intersects $U$, hence $\overline{Z_U(f)}\cap S$ has codimension $\geq 1$ in $S$, so it cannot contain $\psi(Y)\cap S$.
\end{proof}

\begin{lemma}[{\cite[Lemma 3.2.2]{Koskivirta-automforms-GZip}}]\label{adappb}
Let $\psi \colon Y\to X$ be a morphism adapted to $f\in H^0(U,\Ocal_X)$. Then $f$ extends to $X$ if and only if $\psi^*(f)\in H^0(\psi^{-1}(U),\Ocal_Y)$ extends to $Y$. In this case, $f$ vanishes along $S$ if and only if $\psi^*(f)$ vanishes along $\psi^{-1}(S)$.
\end{lemma}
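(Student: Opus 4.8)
The statement asserts that, for a morphism $\psi\colon Y\to X$ adapted to $f\in H^0(U,\Ocal_X)$ (with $S=X\setminus U$ irreducible of codimension $1$ and $X$ irreducible normal), the section $f$ extends to $X$ if and only if $\psi^*(f)$ extends to $Y$, and that when it does, $f$ vanishes along $S$ iff $\psi^*(f)$ vanishes along $\psi^{-1}(S)$. The plan is to argue via valuative/order-of-vanishing considerations at the generic point of $S$ and its image under $\psi$.

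First I would set up the normal case: since $X$ is normal and $S\subset X$ is an irreducible divisor, the local ring $\Ocal_{X,\eta_S}$ at the generic point $\eta_S$ of $S$ is a discrete valuation ring; let $v_S$ be the associated valuation on the function field $k(X)$. Because $U$ is the complement of $S$ (at least in a neighborhood of $\eta_S$), a section $f\in H^0(U,\Ocal_X)$ extends to $X$ if and only if it extends across $\eta_S$, i.e. $v_S(f)\ge 0$; and when $v_S(f)\ge 0$, it vanishes along $S$ iff $v_S(f)>0$. So the whole statement reduces to comparing $v_S(f)$ with the corresponding order of vanishing of $\psi^*(f)$ along $\psi^{-1}(S)$. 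Here I would use condition (i) of adaptedness, $\psi(Y)\cap U\neq\emptyset$, to ensure $\psi(Y)$ is not contained in $S$, so that $\psi^{-1}(U)$ is a nonempty open dense subset of $Y$ and $\psi^*(f)\in H^0(\psi^{-1}(U),\Ocal_Y)$ makes sense; and I would restrict attention to (the closure of) a component of $\psi^{-1}(S)$ dominating $S$ — condition (ii), that $\psi(Y)\cap S$ is not contained in $\overline{Z_U(f)}$, is exactly what guarantees that $f$ does not vanish identically on the image of such a component, so that the relevant orders of vanishing are finite and the pullback is not the zero function on $\psi^{-1}(S)$.

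Next, the core step: pick an irreducible component $T\subset \overline{\psi^{-1}(S)}$ whose image is dense in $S$ (such a $T$ exists and has codimension $1$ in $Y$ by the adaptedness hypotheses, after passing to the normalization of $Y$ if necessary so that $\Ocal_{Y,\eta_T}$ is again a DVR with valuation $v_T$). The morphism $\psi$ induces a local homomorphism $\Ocal_{X,\eta_S}\to \Ocal_{Y,\eta_T}$ of DVRs, hence $v_T(\psi^*g) = e\cdot v_S(g)$ for every $g\in k(X)$, where $e\ge 1$ is the ramification index. In particular $v_T(\psi^*f)\ge 0 \iff v_S(f)\ge 0$, and $v_T(\psi^*f)>0 \iff v_S(f)>0$. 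This gives both equivalences at the level of the chosen divisorial valuations. To finish, I would package this back into statements about global extension: if $\psi^*f$ extends to $Y$, then in particular $v_T(\psi^*f)\ge 0$, hence $v_S(f)\ge 0$, hence $f$ extends across $\eta_S$ and therefore (by normality of $X$, using that $S$ is the only divisor being removed) to all of $X$; the converse direction is immediate since pullback of a regular section is regular. The vanishing statement follows the same way: $f$ vanishes along $S$ $\iff v_S(f)>0 \iff v_T(\psi^*f)>0 \iff \psi^*f$ vanishes along $T$, and since $T$ dominates $S$ and was an arbitrary such component, this is equivalent to $\psi^*f$ vanishing along $\psi^{-1}(S)$.

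The main obstacle I anticipate is the bookkeeping around $Y$ possibly being non-normal and $\psi^{-1}(S)$ possibly having several components with different behaviors: one must be careful that "$f$ extends to $X$" is controlled by a \emph{single} divisorial valuation $v_S$ (which is true by normality of $X$), whereas "$\psi^*f$ extends to $Y$" a priori involves all codimension-$1$ points of (the normalization of) $Y$. The adaptedness conditions are precisely designed to isolate one component $T$ dominating $S$ on which the comparison is faithful; the argument must make clear that extension/vanishing of $\psi^*f$ along $T$ already forces, and is forced by, the corresponding property of $f$ along $S$, while the other components of $\psi^{-1}(S)$ cause no trouble because $\psi^*f$ is automatically regular there once $f$ is regular on $X$. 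I would cite the normality of $X$ and the DVR structure at codimension-$1$ points as the key inputs, and otherwise keep the argument at the level of orders of vanishing rather than grinding through local coordinates.
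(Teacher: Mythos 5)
Your reduction of ``$f$ extends to $X$'' to the single condition $v_S(f)\ge 0$ at the generic point $\eta_S$ of $S$ is correct and is indeed the right use of normality. The gap is in the core step: you assert that adaptedness produces an irreducible component $T$ of $\overline{\psi^{-1}(S)}$ dominating $S$, on which you can run the DVR comparison $v_T(\psi^*g)=e\cdot v_S(g)$. Adaptedness guarantees no such thing. Condition \eqref{adap2} only says that $\psi(Y)\cap S$ contains \emph{one point} outside $\overline{Z_U(f)}$; it does not say $\psi(Y)\cap S$ is dense in $S$ (density is the \emph{sufficient} criterion of Lemma \ref{exadap}, not part of Definition \ref{defadap}). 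For instance, take $X=\AA^2$, $S=\{x=0\}$, $f=1/x$, and $\psi\colon\AA^1\to X$, $t\mapsto(t,0)$: this $\psi$ is adapted to $f$, but $\psi^{-1}(S)$ is a single closed point, no component of it maps to $\eta_S$, and there is no valuation $v_T$ with $v_T\circ\psi^*=e\cdot v_S$. Your argument therefore does not prove the lemma in the generality in which it is stated (and used via the citation); the entire ``if $\psi^*f$ extends then $v_S(f)\ge 0$'' direction, and the converse of the vanishing statement, rest on the nonexistent $T$.

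The argument that matches the hypotheses is different in kind: argue by contraposition with $h=1/f$. If $v_S(f)<0$, then $h$ is regular on $X\setminus W$, where $W$ is the union of the codimension-one components of $\overline{Z_U(f)}$ (normality again), and $h$ vanishes identically on $S\setminus W$ because $v_S(h)>0$ and $S$ is irreducible. Condition \eqref{adap2} supplies $y\in\psi^{-1}(S)$ with $\psi(y)\notin\overline{Z_U(f)}$, so $\psi^*h$ is regular at $y$ and $(\psi^*h)(y)=0$. On the other hand, the two adaptedness conditions together with irreducibility of $\overline{\psi(Y)}$ force $\psi(Y)\not\subset \overline{Z_U(f)}\cup S$, so $\psi^{-1}(U\setminus Z_U(f))$ is a dense open of $Y$ on which $\psi^*f\cdot\psi^*h=1$; if $\psi^*f$ admitted a regular extension $F$ to $Y$, then $F\cdot\psi^*h-1$ would be a regular function on $\psi^{-1}(X\setminus W)$ vanishing on a dense subset, hence at $y$, giving $-1=0$. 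The vanishing assertion is handled the same way: if $f$ extends but does not vanish along $S$, its full zero locus satisfies $Z(f)=\overline{Z_U(f)}$ (any codimension-one component of $Z(f)$ contained in $S$ would equal $S$), so condition \eqref{adap2} directly produces $y\in\psi^{-1}(S)$ with $(\psi^*f)(y)\neq 0$. This is why condition \eqref{adap2} is phrased in terms of $\overline{Z_U(f)}$ rather than in terms of density.
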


We apply the above notions to the following situation. From now on, let $(G,\mu)$ be a cocharacter datum, with attached zip datum $\Zcal=(G,P,L,Q,M,\varphi)$ as in \S\ref{subsec-cochar}. Assume that $(B,T)$ is a Borel pair defined over $\FF_q$ such that $B\subset P$. We take a frame $(B,T,z)$ as in Lemma \ref{lem-framemu}. 
Consider the variety $G_k$ and the open subset $U_{\mu} \subset G_k$ (the $\mu$-ordinary stratum, defined after Theorem \ref{thm-E-orb-param}). 
The complement of $U_{\mu}$ in $G_k$ is not irreducible in general, so in order to apply the previous results, we slightly modify the problem. Recall the parametrization of $E$-orbits in $G_k$ \eqref{orbparam}. Using Theorem \ref{thm-E-orb-param}, we have
\begin{equation}\label{GminusU}
    G_k \setminus U_{\mu} = \bigcup_{\alpha \in \Delta^P} Z_\alpha, \quad Z_\alpha=\overline{E\cdot s_\alpha}
\end{equation}
where $E\cdot s_\alpha$ denotes the $E$-orbit of $s_{\alpha}$ and the bar denotes the Zariski closure. 
Indeed, by \eqref{orbparam}, the $E$-orbits of codimension $1$ in $G_k$ are the $E$-orbits of $wz^{-1}$ where $w\in {}^I W$ is an element of length $\ell(w_{0,I}w_0)-1$. These elements are of the form $w_{0,I}s_{\alpha}w_0$ for $\alpha \in \Delta^P$. Since $z=\sigma(w_{0,I})w_0$, the element $wz^{-1}$ has the form $w_{0,I}s_{\alpha}\sigma(w_{0,I})$. Since $(w_{0,I},\sigma(w_{0,I}))\in E$, this element generates the same $E$-orbit as $s_\alpha$. This proves the decomposition \eqref{GminusU} above. For any $\alpha\in \Delta^P$, define an open subset 
\begin{equation}
X_\alpha  \colonequals  G_k \setminus \bigcup_{\beta\in \Delta^P,\ \beta\neq \alpha}Z_\beta.
\end{equation}
Clearly $U_{\mu}\subset X_\alpha$ and one has $X_{\alpha}\setminus U_{\mu}=E\cdot s_{\alpha}$. In particular, $X_{\alpha}\setminus U_{\mu}$ is irreducible.
We define a morphism which satisfies the conditions of Definition \ref{defadap} for the pair $(X_\alpha,U_{\mu})$. 

We take an isomorphism 
$u_{\alpha} \colon \GG_{\mathrm{a}} \to U_{\alpha}$ 
for $\alpha \in \Phi$ 
so that 
$(u_{\alpha})_{\alpha \in \Phi}$ is a realization 
in the sense of \cite[8.1.4]{Springer-Linear-Algebraic-Groups-book}. In particular, we have 
\begin{equation}\label{eq:phiconj}
 t u_{\alpha}(x)t^{-1}=u_{\alpha}(\alpha(t)x)
\end{equation}
for $x \in \GG_{\mathrm{a}}$ and $t \in T$. 
For $\alpha \in \Phi$, 
there is a unique homomorphism
\begin{equation}\label{Ralpha}
 \phi_\alpha  \colon  \SL_{2,k} \to G_k 
\end{equation}
such that 
\[
 \phi_\alpha 
 \left( \begin{pmatrix}
 1 & x \\ 0 & 1 
 \end{pmatrix}\right) = u_{\alpha}(x), \quad 
 \phi_\alpha 
 \left( \begin{pmatrix}
 1 & 0 \\ x & 1 
 \end{pmatrix}\right) = u_{-\alpha}(x) 
\]
as in \cite[9.2.2]{Springer-Linear-Algebraic-Groups-book}. Also note that $\phi_{\alpha}(\diag(t,t^{-1}))=\alpha^\vee(t)$. 

Let $\alpha\in \Delta^P$. Set $Y=E \times \AA^1$ and 
\begin{equation}\label{phia}
\psi_\alpha \colon Y \to G_k ; \ ((x,y),t)\mapsto x \phi_{\alpha} \left(A(t)\right)y^{-1} \quad \textrm{where} \ A(t)=\left(
\begin{matrix}
t & 1 \\ -1 & 0 \end{matrix}
\right)\in \SL_{2,k}.
\end{equation}
Note that $\phi_{\alpha} (A(0))=s_\alpha$ in $W$. 
The following identity will be crucial for later purposes:
\begin{equation}\label{At}
A(t)=\left(\begin{matrix}
1&0\\-t^{-1}&1
\end{matrix} \right) \left(\begin{matrix}
t&0\\0&t^{-1}
\end{matrix} \right)\left(\begin{matrix}
1&t^{-1}\\0&1
\end{matrix} \right).
\end{equation}
Let $\wp \colon T \to T; \ g \mapsto g \varphi(g)^{-1}$ be the Lang torsor. 
Then $\wp$ induces the isomorphism 
\begin{equation}\label{def-Pstar}
 \wp_* \colon X_*(T)_{\RR} \stackrel{\sim}{\longrightarrow} X_*(T)_{\RR}; \  \delta \mapsto \wp \circ \delta =\delta - q \sigma(\delta).     
\end{equation}
We put $\delta_{\alpha}=\wp_*^{-1}(\alpha^{\vee})$. 
Recall that $\sigma$ denotes the $q$-th power Frobenius action on $\Delta$. 
We put 
\begin{equation}\label{malpha-equ}
 m_{\alpha} =\min \{ m \geq 1 \mid 
 \sigma^{-m}(\alpha) \notin I \}     
\end{equation}
and $t_{\alpha}=t^{-1}\alpha(\varphi(\delta_{\alpha}(t)))^{-1} =t \alpha(\delta_{\alpha}(t))^{-1} \in t^{\QQ}$, where $t$ is an indeterminate. 

\begin{proposition}\label{psiadapted}
The following properties hold: 
\begin{assertionlist}
\item \label{item-imagepsi} The image of $\psi_\alpha$ is contained in $X_\alpha$.
\item \label{item-psit} For any $(x,y)\in E$ and $t\in \AA^1$, one has $\psi_\alpha((x,y),t)\in U_{\mu} \Longleftrightarrow t\neq 0$.
\item \label{item-zero} For all $(x,y)\in E$, we have $\psi_\alpha((x,y),0)\in E\cdot s_\alpha$.

\end{assertionlist}

\end{proposition}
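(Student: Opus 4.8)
\emph{Setup and reduction.} The plan is to observe first that, since $E$ acts on $G_k$ and the subsets $U_\mu$, $X_\alpha$ and the $Z_\beta$ are all $E$-stable, everything depends only on the $E$-orbit of $g(t):=\phi_\alpha(A(t))\in G_k$: indeed $\psi_\alpha((x,y),t)=(x,y)\cdot g(t)$. So all three items reduce to understanding the $E$-orbit of $g(t)$ as a function of $t\in\AA^1$. Assertion \ref{item-zero} is then immediate: $g(0)=\phi_\alpha(A(0))$ is the representative of $s_\alpha$ built from the $\SL_2$-datum (\S\ref{sec-adap}), so $\psi_\alpha((x,y),0)\in E\cdot s_\alpha$. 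For \ref{item-psit} and \ref{item-imagepsi} I first record that, by Theorem \ref{thm-E-orb-param}, for $\beta\neq\alpha$ the closed sets $Z_\alpha$ and $Z_\beta$ are distinct irreducible of codimension $1$, so $Z_\alpha\cap Z_\beta$ has codimension $\geq 2$ and hence $E\cdot s_\alpha$ is disjoint from $Z_\beta$; in particular $E\cdot s_\alpha\subseteq X_\alpha$, and $E\cdot s_\alpha\cap U_\mu=\emptyset$ (distinct $E$-orbits via \eqref{orbparam}). Granting this, \ref{item-zero} already yields the implication $\Rightarrow$ of \ref{item-psit}; and both \ref{item-imagepsi} and the remaining implication of \ref{item-psit} will follow once we prove
\[
g(t)\in U_\mu\qquad\text{for every }t\in k^{\times}.
\]

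\emph{A normal form for $g(t)$.} For $t\neq 0$, applying $\phi_\alpha$ to \eqref{At} gives $g(t)=u_{-\alpha}(-t^{-1})\,\alpha^\vee(t)\,u_\alpha(t^{-1})$. By the very definition of $\delta_\alpha=\wp_*^{-1}(\alpha^\vee)$ and \eqref{def-Pstar} one has $\alpha^\vee(t)=\delta_\alpha(t)\varphi(\delta_\alpha(t))^{-1}$; substituting this and pushing the torus factor $\delta_\alpha(t)$ to the far left and $\varphi(\delta_\alpha(t))^{-1}$ to the far right with \eqref{eq:phiconj}, the two root-group exponents collapse exactly to powers of $t_\alpha$, giving
\[
g(t)=\delta_\alpha(t)\cdot u_{-\alpha}(-t_\alpha^{-1})\,u_\alpha(t_\alpha)\cdot\varphi(\delta_\alpha(t))^{-1}.
\]
Now $(\delta_\alpha(t),\varphi(\delta_\alpha(t)))\in E$ since $\delta_\alpha(t)\in T\subseteq L$; and because $\alpha\in\Delta^P$ is a positive simple root outside $I$ while $B\subseteq P=P_-(\mu)$, one gets $\langle\mu,\alpha\rangle>0$, hence $U_{-\alpha}\subseteq R_{\mathrm{u}}(P)$ and $(u_{-\alpha}(-t_\alpha^{-1}),1)\in E$. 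Acting by these two elements of $E$ shows $g(t)$ lies in the $E$-orbit of $u_\alpha(t_\alpha)$, with $t_\alpha\in k^\times$. Thus it remains to prove: for every $\alpha\in\Delta^P$ and every $c\in k^\times$, $u_\alpha(c)\in U_\mu$.

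\emph{Hitting the open orbit.} This is where the integer $m_\alpha$ of \eqref{malpha-equ} enters. The plan is an induction using the basic move: if $\gamma$ is a root with $\sigma^{-1}(\gamma)\in I$, then $\gamma\in\Phi_M$, $u_\gamma(c)=\varphi\bigl(u_{\sigma^{-1}(\gamma)}(d)\bigr)$ for a suitable $d\in k^\times$, and $(u_{\sigma^{-1}(\gamma)}(d),u_\gamma(-c))\in E$; applying this element of $E$ to $u_{\sigma^{-1}(\gamma)}(-d)$ shows $u_\gamma(c)$ lies in the $E$-orbit of $u_{\sigma^{-1}(\gamma)}(-d)$. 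Iterating for $\gamma=\sigma^{-j}(\alpha)$, $j=0,\dots,m_\alpha-2$ (legitimate since $\sigma^{-j}(\alpha)\in I$ for $1\le j\le m_\alpha-1$ by definition of $m_\alpha$), reduces $u_\alpha(c)$, up to the $E$-action, to $u_{\gamma_0}(c_0)$ with $\gamma_0=\sigma^{-(m_\alpha-1)}(\alpha)$ and $\sigma^{-1}(\gamma_0)=\sigma^{-m_\alpha}(\alpha)\notin I$. For such $\gamma_0$ one checks $U_{\gamma_0}\subseteq R_{\mathrm{u}}(Q)$ (since $\sigma^{-1}(\gamma_0)$ is simple and outside $I$, so $\gamma_0\notin\Phi_M$ and $\langle\mu,\sigma^{-1}(\gamma_0)\rangle>0$), hence $(1,u_{\gamma_0}(-c_0))\in E$ and $u_{\gamma_0}(c_0)\in E\cdot e$. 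Finally one must see $e\in U_\mu$: by Lemma \ref{lem-framemu} the open orbit is $E\cdot(\dot w_{0,I}\dot z^{-1})$, and a Lang--Steinberg argument on $T$ for the twisted Frobenius $(w_{0,I}\sigma(w_{0,I})^{-1})\circ\varphi$ produces $p\in L$ with $(p,\varphi(p))\cdot e=p\,\varphi(p)^{-1}$ equal, up to the relevant torus correction coming from the Chevalley section, to a representative of $w_{0,I}w_0z^{-1}$. Chaining these inclusions gives $u_\alpha(c)\in U_\mu$, hence $g(t)\in U_\mu$ for $t\neq 0$, which completes \ref{item-imagepsi} and \ref{item-psit}.

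\emph{Expected main obstacle.} The routine-but-delicate point is the last paragraph in full generality, when $P$ (equivalently $L$) is not defined over $\FF_q$: one must correctly track the $\sigma^{-1}$-orbit of $\alpha$ relative to $I$, keep the Frobenius-image normalizations of the root-group coordinates consistent through the $m_\alpha-1$ iterations, and carry out the Lang--Steinberg bookkeeping identifying $E\cdot e$ with the $\mu$-ordinary orbit. When $P$ is defined over $\FF_q$ one has $m_\alpha=1$, the iteration is empty, and the argument collapses to the one in \cite[\S 3.2]{Koskivirta-automforms-GZip}.
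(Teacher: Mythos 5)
Your proof is correct and follows essentially the same route as the paper's: both use the factorization \eqref{At} to rewrite $\phi_\alpha(A(t))$ with the torus factor $\alpha^\vee(t)=\delta_\alpha(t)\varphi(\delta_\alpha(t))^{-1}$ split via the Lang torsor, absorb $u_{-\alpha}(-t^{-1})$ into $R_{\mathrm{u}}(P)$, and then transport the remaining $u_\alpha(t_\alpha)$ along the chain $\sigma^{-1}(\alpha),\dots,\sigma^{-(m_\alpha-1)}(\alpha)$ until the root leaves $\sigma(I)$ and lands in $R_{\mathrm{u}}(Q)$, concluding $\phi_\alpha(A(t))\in E\cdot 1=U_\mu$ --- the paper simply packages your iterated elementary $E$-moves into the single element $u_{t,\alpha}$ and the explicit pair \eqref{eq:inE}. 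The only place you work harder than needed is the final fact $1\in U_\mu$: the paper obtains it directly from $(\dot w_{0,I},\varphi(\dot w_{0,I}))\in E$ together with the frame $z=\sigma(w_{0,I})w_0$ and the representative-independence of $E$-orbits (see \S\ref{subsec-ordloc}), so no separate Lang--Steinberg bookkeeping is required.
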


\begin{proof}
It suffices to show \eqref{item-psit} and \eqref{item-zero}. 
If $t=0$, we have $\phi_{\alpha}(A(0))=s_\alpha$ in $W$. Hence $\psi_\alpha((x,y),0)\in E\cdot s_\alpha$. 
Assume that $t \neq 0$. 
We put 
\[
 u_{t,\alpha}=\prod_{i=1}^{m_{\alpha}-1} \phi_{\sigma^{-i}(\alpha)} 
 \left( \begin{pmatrix}
1&-t_{\alpha}^{\frac{1}{q^i}}\\0&1
\end{pmatrix} \right) 
\]
where the products are taken in the increasing order of indices. 
By \eqref{At} and the definitions of 
$\delta_{\alpha}$, $t_{\alpha}$ and 
$u_{t,\alpha}$, we have 
\begin{align}
 \phi_{\alpha} \left( A(t) \right) 
 &= 
 \phi_{\alpha} \left( \begin{pmatrix}
1&0\\-t^{-1}&1
\end{pmatrix} \right) \delta_{\alpha}(t)\varphi(\delta_{\alpha}(t))^{-1} 
 \phi_{\alpha} \left(\begin{pmatrix}
1&t^{-1} \\ 0&1
\end{pmatrix} \right)\\ 
 &= 
 \phi_{\alpha} \left(\begin{pmatrix}
1&0\\-t^{-1}&1
\end{pmatrix} \right) \delta_{\alpha}(t) 
 \phi_{\alpha} \left(\begin{pmatrix}
1&t_{\alpha} \\ 0&1
\end{pmatrix} \right) \varphi(\delta_{\alpha}(t))^{-1}\\ 
 &=  \phi_{\alpha} \left(\begin{pmatrix}
1&0\\-t^{-1}&1
\end{pmatrix} \right) \delta_{\alpha}(t) 
u_{t,\alpha} 
\left( 
\varphi(\delta_{\alpha}(t))
 \phi_{\alpha} \left(\begin{pmatrix}
1& -t_{\alpha} \\ 0&1
\end{pmatrix} \right)
u_{t,\alpha}
\right)^{-1}. \label{eq:phiadec} 
\end{align}
We have 
\begin{equation}\label{eq:inE}
 \left(
  \phi_{\alpha} \left(\begin{pmatrix}
1&0\\-t^{-1}&1
\end{pmatrix} \right) \delta_{\alpha}(t) 
u_{t,\alpha}, 
\varphi(\delta_{\alpha}(t))
 \phi_{\alpha} \left(\begin{pmatrix}
1& -t_{\alpha} \\ 0&1
\end{pmatrix} \right) 
u_{t,\alpha} 
\right) \in E 
\end{equation}
because 
\begin{equation*}\label{RainRu}
  \phi_{\alpha} \left(\begin{pmatrix}
1&0\\-t^{-1}&1
\end{pmatrix} \right) \in R_{\mathrm{u}}(P), 
 \quad 
 \phi_{\sigma^{-(m_{\alpha}-1)}(\alpha)} 
 \left( \begin{pmatrix}
1&-t_{\alpha}^{\frac{1}{q^{m_{\alpha}-1}}}\\0&1
\end{pmatrix} \right) 
 \in R_{\mathrm{u}}(Q) 
\end{equation*}
by $\alpha \notin I$ and $\sigma^{-(m_{\alpha}-1)}(\alpha) \notin \sigma (I)$. 
Hence we have 
$\psi_\alpha((x,y),t)\in U_{\mu}$ if $t\neq 0$. 
\end{proof}

Set $Y_0 \colonequals E\times \GG_{\mathrm{m}}\subset Y$. We obtain a map $\psi_\alpha \colon Y_0\to U_\mu$.

\begin{corollary}\label{corphial}
Let $f \colon U_{\mu}\to \AA^n$ be a regular map. Then $f$ extends to a regular map $G_k \to \AA^n$ if and only if for all $\alpha \in \Delta^P$, the map $f\circ \psi_\alpha  \colon  Y_0\to \AA^n$ extends to a map $Y\to \AA^n$.
\end{corollary}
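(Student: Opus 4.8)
The plan is to reduce the extension problem to each of the codimension-one strata $E\cdot s_\alpha$, $\alpha\in\Delta^P$, and then feed the map $\psi_\alpha$ into the adapted-morphism criterion of Lemma~\ref{adappb}. It is harmless to argue one coordinate at a time: writing $f=(f_1,\dots,f_n)$, the map $f$ extends to $G_k$ precisely when each $f_i\in H^0(U_\mu,\Ocal_{G_k})$ does, and likewise $f\circ\psi_\alpha$ extends to $Y$ precisely when each $f_i\circ\psi_\alpha$ does; coordinates that vanish identically contribute nothing. So I may assume $n=1$ and, discarding the trivial case, $f\neq 0$.

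The first step is the reduction: a nonzero $f\in H^0(U_\mu,\Ocal_{G_k})$ extends to $G_k$ if and only if it extends to $X_\alpha$ for every $\alpha\in\Delta^P$. One direction is restriction. For the converse, regard $f$ as a rational function on $G_k$, which is legitimate since $U_\mu$ is dense. Let $D\subset G_k$ be an irreducible closed subset of codimension one. Either $D$ meets $U_\mu$, so its generic point lies in $U_\mu$ where $f$ is regular, or $D\subset G_k\setminus U_\mu=\bigcup_{\alpha\in\Delta^P}Z_\alpha$ by \eqref{GminusU}. In the latter case, since each $Z_\alpha=\overline{E\cdot s_\alpha}$ is irreducible and of codimension one --- the orbit $E\cdot s_\alpha$ corresponds under \eqref{orbparam} to an element of ${}^I W$ of length $\ell(w_{0,I}w_0)-1$, so $\dim(E\cdot s_\alpha)=\dim(P)+\ell(w_{0,I}w_0)-1=\dim G_k-1$ by Theorem~\ref{thm-E-orb-param} --- the subset $D$ must coincide with some $Z_\alpha$; and then the generic point of $D=Z_\alpha$ lies in the dense open subset $X_\alpha\cap Z_\alpha=E\cdot s_\alpha$ of $Z_\alpha$ (here $X_\alpha\cap Z_\alpha=E\cdot s_\alpha$ because $X_\alpha\setminus U_\mu=E\cdot s_\alpha$ and $Z_\alpha\cap U_\mu=\emptyset$), where $f$ is regular by hypothesis. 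Hence $f$ is regular at every codimension-one point of $G_k$, and as $G_k$ is smooth, hence normal, $f$ is regular on all of $G_k$.

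The second step is to apply Lemmas~\ref{exadap} and~\ref{adappb} to the pair $(X_\alpha,U_\mu)$, whose closed complement $S=X_\alpha\setminus U_\mu=E\cdot s_\alpha$ is irreducible of codimension one and whose ambient space $X_\alpha$ is irreducible and normal (being open in $G_k$). By Proposition~\ref{psiadapted}, the image of $\psi_\alpha$ lies in $X_\alpha$, it meets $U_\mu$, and $\psi_\alpha^{-1}(U_\mu)=Y_0$. Moreover, for $(x,y)\in E$ one has $\psi_\alpha((x,y),0)=x\,\phi_\alpha(A(0))\,y^{-1}$ with $\phi_\alpha(A(0))$ a representative of $s_\alpha$, so as $(x,y)$ runs over $E$ these points sweep out all of the orbit $E\cdot s_\alpha=S$ (recall $E$ acts by $(a,b)\cdot g=agb^{-1}$); in particular $\psi_\alpha(Y)\cap S$ is dense in $S$. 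Lemma~\ref{exadap} then shows that $\psi_\alpha$ is adapted to $f$ with respect to $U_\mu$, and Lemma~\ref{adappb} gives that $f$ extends to $X_\alpha$ if and only if $\psi_\alpha^*(f)=f\circ\psi_\alpha\in H^0(Y_0,\Ocal_Y)$ extends to $Y$. Chaining this equivalence with the reduction of the first step, and with the coordinate-wise remark, proves the corollary.

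There is no serious obstacle here, as all the ingredients are already in place; the one point that needs genuine care is the reduction step, namely confirming that the only codimension-one irreducible closed subsets of $G_k$ contained in $G_k\setminus U_\mu$ are the $Z_\alpha$, and that each $Z_\alpha$ really has codimension one, so that the normality of $G_k$ can be brought to bear. Everything else is a direct assembly of Proposition~\ref{psiadapted} and Lemmas~\ref{exadap}--\ref{adappb}.
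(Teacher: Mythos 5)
Your proof is correct and follows essentially the same route as the paper: apply Lemmas~\ref{exadap} and~\ref{adappb} to the pairs $(X_\alpha,U_\mu)$ coordinate-wise, then extend over the remaining codimension-$\geq 2$ locus by normality of $G_k$. Your first step (checking regularity at every codimension-one point) is just a slightly more explicit phrasing of the paper's observation that the complement of $\bigcup_{\alpha}X_\alpha$ has codimension $\geq 2$.
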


\begin{proof}
Applying Lemma \ref{exadap} and Lemma \ref{adappb} to the coordinate functions of $f$, 
we can extend $f$ to $\bigcup_{\alpha \in \Delta^P} X_{\alpha}$. 
Since the complement of $\bigcup_{\alpha \in \Delta^P} X_{\alpha}$ in $G$ has codimension $\geq 2$, 
we can extend $f$ to $G$ by normality. 
\end{proof}

\subsection{The space of $\mu$-ordinary sections} \label{subsec-ordloc}

Recall that $\Ucal_{\mu}=[E\backslash U_{\mu}]\subset \GZip^\mu$ denotes the $\mu$-ordinary locus (see \S\ref{subsec-zipstrata}). The open substack  $\Ucal_\mu\subset \GZip^\mu$  is dense, and hence induces an obvious injective map
\begin{equation}
H^0(\GZip^\mu,\mathcal{V}(\rho))\to H^0(\Ucal_\mu,\mathcal{V}(\rho))
\end{equation}
for any $(V,\rho)\in \Rep(P)$. This will give an upper bound approximation of the space $H^0(\GZip^\mu,\mathcal{V}(\rho))$. We claim that $1\in U_{\mu}$. Indeed, by Theorem \ref{thm-E-orb-param}, $U_\mu$ coincides with the $E$-orbit of the element $w_{0,I}w_0 z^{-1}$. Since $z=\sigma(w_{0,I})w_{0}$, we obtain $w_{0,I}w_0 z^{-1}=w_{0,I}\sigma(w_{0,I})$. This element is in the same $E$-orbit as $1$, because $(w_{0,I},\sigma(w_{0,I}))\in E$. This proves the claim.

We denote by $L_{\varphi}\subset E$ the scheme-theoretical stabilizer of the element $1$. Note that 
\begin{equation}\label{Lphi-equ}
L_{\varphi}=E\cap \{(x,x) \mid x\in G_k \}    
\end{equation}
is a $0$-dimensional algebraic group. In general it is non-smooth. Denote by $L_0\subset L$ the largest algebraic subgroup defined over $\FF_q$. In other words,
\begin{equation}
    L_0=\bigcap_{n\geq 0}L^{(q^n)}.
\end{equation}
In view of \eqref{Lphi-equ}, it is clear that the restriction of the first projection $E\to P$ induces a closed immersion $L_{\varphi}\to P$. Hence we will identify $L_\varphi$ with its image and view it as a subgroup of $P$.

\begin{lemma}[{\cite[Lemma 3.2.1]{Koskivirta-Wedhorn-Hasse}}]\label{lemLphi} \ 
\begin{assertionlist}
\item \label{lemLphi-item1} One has $L_{\varphi}\subset L$.
\item \label{lemLphi-item2}  The group $L_{\varphi}$ can be written as a semidirect product
\begin{equation}
L_{\varphi}=L_{\varphi}^\circ\rtimes L_0(\FF_q)    
\end{equation}
where $L_{\varphi}^\circ$ is the identity component of $L_{\varphi}$. Furthermore, $L_{\varphi}^\circ$ is a finite unipotent algebraic group.
\item \label{lemLphi-item3}  Assume that $P$ is defined over $\FF_q$. Then $L_0=L$ and $L_{\varphi}=L(\FF_q)$, viewed as a constant algebraic group.
\end{assertionlist}
\end{lemma}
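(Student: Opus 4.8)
The plan is to work throughout with the explicit description $L_{\varphi}=\Stab_E(1)=E\cap\{(x,x)\mid x\in G_k\}$, using that — by the presentation of $E$ recalled in \S\ref{subsec-zipdatum} — every point of $E$ is uniquely of the form $(au,\varphi(a)v)$ with $a\in L$, $u\in R_{\mathrm{u}}(P)$, $v\in R_{\mathrm{u}}(Q)$. Under the closed immersion $p_1\colon L_{\varphi}\hookrightarrow P$ this identifies $L_{\varphi}$ with the subscheme $\{x\in P\cap Q\mid \varphi(\theta^P_L(x))=\theta^Q_M(x)\}$ of $P\cap Q$, equivalently with the set of $x=au$ (with $a\in L$, $u\in R_{\mathrm{u}}(P)$) for which $au=\varphi(a)v$ for some $v\in R_{\mathrm{u}}(Q)$.

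For \eqref{lemLphi-item1}, I would reduce to showing that the unipotent factor $u$ is trivial, so that $x=a\in L$. The key point is that $\varphi$ is a homomorphism compatible with the parabolic data: from $x\in P=P_-(\mu)$ one gets $\varphi(x)=\varphi(a)\varphi(u)\in\varphi(P)=P^{(q)}$, and $P^{(q)}$ is precisely the parabolic opposite to $Q=P_+(\mu)^{(q)}$ relative to the common Levi $M=L^{(q)}$; moreover the defining relation reads $\theta^Q_M(x)=\varphi(a)=\theta^{P^{(q)}}_M(\varphi(x))$. Thus $x$ lies in $Q$ and $\varphi(x)$ in the opposite parabolic $P^{(q)}$, both with the same Levi component $\varphi(a)$. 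From here a computation in root coordinates — using a realization $(u_\alpha)_{\alpha\in\Phi}$ as in \eqref{eq:phiconj}, the frame $(B,T,z)$, and the uniqueness of the big-cell factorization $R_{\mathrm{u}}(P^{(q)})\times M\times R_{\mathrm{u}}(Q)\hookrightarrow G_k$ — forces $u=1$. This is the technical heart and is exactly what is carried out in \cite[Lemma~3.2.1]{Koskivirta-Wedhorn-Hasse}; I expect this to be the main obstacle, precisely because $P\cap Q$ is in general strictly larger than $L$, so the Frobenius-compatibility built into $E$ is genuinely needed to cut it down to $L$.

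Part \eqref{lemLphi-item3} should then be easy. Assuming $P$ is defined over $\FF_q$, the Levi $L\supset T$ is defined over $\FF_q$, so $L_0=L$, and by the argument of Remark~\ref{rmkmuFp} the parabolic $Q$ is opposite to $P$ with Levi $M=L$, whence $P\cap Q=L$. For $x\in L_{\varphi}$ one then has $\theta^P_L(x)=\theta^Q_M(x)=x$, so the defining relation becomes $\varphi(x)=x$ and $L_{\varphi}$ is the Frobenius fixed-point scheme $L^{\varphi}$. Since the differential of the $q$-power Frobenius vanishes, the Lang map $L\to L$, $g\mapsto g\varphi(g)^{-1}$, is étale; hence $L^{\varphi}$, being its fibre over $1$, is finite étale, and — $k$ being algebraically closed — it is the constant group scheme with group of points $L(\FF_q)$, which gives \eqref{lemLphi-item3}.

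For \eqref{lemLphi-item2} I would first note that $\dim\GZip^\mu=0$ (equivalently $\dim E=\dim G_k$, by Theorem~\ref{thm-E-orb-param}), so $L_{\varphi}$ is a finite algebraic group over the perfect field $k$ and therefore splits canonically as $L_{\varphi}=L_{\varphi}^{\circ}\rtimes(L_{\varphi})_{\mathrm{red}}$ with $L_{\varphi}^{\circ}$ connected (hence infinitesimal) and $(L_{\varphi})_{\mathrm{red}}$ finite étale, i.e. a constant group. Next, evaluating the defining relation on $k$-points and using \eqref{lemLphi-item1} (as in \emph{loc. cit.}), I would show that every $x\in L_{\varphi}(k)$ satisfies $\varphi(x)=x$; such an $x$ then lies in $L^{(q^n)}(k)$ for all $n\ge 0$, hence in $L_0(k)$, and is Frobenius-fixed, so $x\in L_0(\FF_q)$. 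As the reverse inclusion $L_0(\FF_q)\subset L_{\varphi}(k)$ is immediate, this identifies $(L_{\varphi})_{\mathrm{red}}=L_0(\FF_q)$. Finally, since $d\varphi=0$, differentiating the defining relation yields $\Lie(L_{\varphi})=\Lie(P\cap Q)\cap\Lie(R_{\mathrm{u}}(Q))\subset\Lie(R_{\mathrm{u}}(Q))$, so $\Lie(L_{\varphi})$ consists of nilpotent operators in any faithful representation of $G$; this forces the infinitesimal group $L_{\varphi}^{\circ}$ to be unipotent (for instance via its Frobenius-kernel filtration, whose height-one graded piece has unipotent restricted Lie algebra). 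Thus $L_{\varphi}^{\circ}$ is a finite unipotent algebraic group, completing \eqref{lemLphi-item2}.
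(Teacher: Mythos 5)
The lemma is quoted in the paper with a citation to \cite[Lemma 3.2.1]{Koskivirta-Wedhorn-Hasse} and no proof is given there, so your write-up has to stand on its own --- and at the two crucial points it does not, because you defer precisely those points back to the same reference. For part \eqref{lemLphi-item1} your reduction is correct ($x=au=\varphi(a)v$ with $a\in L$, $u\in R_{\mathrm{u}}(P)$, $v\in R_{\mathrm{u}}(Q)$, and the goal is $u=1$), but the mechanism you gesture at does not close: the observation that $x\in Q$ and $\varphi(x)\in P^{(q)}$ have the \emph{same} $M$-component $\varphi(a)$ carries no information by itself, since distinct elements routinely share a Levi component, and the uniqueness of the big-cell factorization $R_{\mathrm{u}}(P^{(q)})\times M\times R_{\mathrm{u}}(Q)\hookrightarrow G_k$ concerns the factorization of a \emph{single} element, not a pair. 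What the relation actually yields is, e.g., $x^{-1}\varphi(x)=v^{-1}\varphi(u)\in R_{\mathrm{u}}(Q)\cdot R_{\mathrm{u}}(P^{(q)})$, and nothing forces this element to be trivial without a further argument (an iteration of $\varphi$ and a root-group computation --- note that in the $\U(2,1)$ example \eqref{eq-Lvarphi-GU21} one has $\varphi(x)\neq x$ on nilpotent points, so one cannot hope to show $x^{-1}\varphi(x)=1$). Writing that ``a computation in root coordinates forces $u=1$'' and that this ``is exactly what is carried out in loc.\ cit.'' is an acknowledgement of the gap, not a proof of the technical heart of the lemma.

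Part \eqref{lemLphi-item2} inherits the same problem. The splitting of a finite group scheme over a perfect field as infinitesimal $\rtimes$ \'etale is standard, and your unipotence argument is sound (indeed $\Lie(L_\varphi)=\Lie(P)\cap\Lie(R_{\mathrm{u}}(Q))$ because $d\varphi=0$). But the identification $(L_\varphi)_{\mathrm{red}}=L_0(\FF_q)$ rests on the claim that every $x\in L_\varphi(k)$ satisfies $\varphi(x)=x$, which you again assert ``as in loc.\ cit.'' Granting \eqref{lemLphi-item1}, the defining relation only gives $\varphi(x)=\theta^Q_M(x)$ for $x\in L(k)\cap Q(k)$; to upgrade this to $\varphi(x)=x$ you must first show $x\in M$, so that $\theta^Q_M(x)=x$ --- e.g.\ by iterating the argument over the Frobenius twists to land in $L_0=\bigcap_{n\geq 0}L^{(q^n)}$ --- and that step is exactly the remaining content of the lemma. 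By contrast, part \eqref{lemLphi-item3} is complete and correct as written: when $P$ is defined over $\FF_q$ one has $P\cap Q=L$ directly, the relation collapses to $\varphi(x)=x$, and the \'etaleness of the Lang map identifies $L_\varphi$ with the constant group $L(\FF_q)$ (note this part does not even need \eqref{lemLphi-item1}).
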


\begin{proposition}\label{prop-Umu}
The stack $\Ucal_{\mu}$ is isomorphic to $B(L_{\varphi})=[1/L_{\varphi}]$, the classifying stack of $L_{\varphi}$.
\end{proposition}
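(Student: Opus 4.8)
The plan is to realise the $\mu$-ordinary locus $\Ucal_{\mu}$ as the classifying stack of the stabilizer of the base point $1\in U_{\mu}$, i.e.\ to run the orbit--stabilizer theorem at the level of quotient stacks.

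\emph{First} I would set up the orbit picture. Since $1\in U_{\mu}$ (shown just above) and $U_{\mu}$ is a single $E$-orbit by Theorem~\ref{thm-E-orb-param}, we have $U_{\mu}=E\cdot 1$ as a (reduced, smooth) locally closed subscheme of $G_k$. The $E$-action being $(x,y)\cdot g=xgy^{-1}$, one has $(x,y)\cdot 1=xy^{-1}$, so the scheme-theoretic stabilizer of $1$ is $\Stab_E(1)=E\cap\{(x,x)\mid x\in G_k\}=L_{\varphi}$ by \eqref{Lphi-equ}; by Lemma~\ref{lemLphi} this is a finite (in general non-smooth) subgroup scheme of $E$, contained in $L\subset P$.

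\emph{Next} I would prove that the orbit morphism $a\colon E\to U_{\mu}$, $\epsilon\mapsto\epsilon\cdot 1$, is an fppf $L_{\varphi}$-torsor. It is surjective and $E$-equivariant for left translation, and its fibres are translates of the finite group scheme $L_{\varphi}$, hence finite; the fibre-dimension theorem then gives $\dim E=\dim U_{\mu}$. As $E$ is smooth (hence Cohen--Macaulay), $U_{\mu}$ is smooth (hence regular), and every fibre has dimension $0=\dim E-\dim U_{\mu}$, the miracle-flatness criterion shows $a$ is flat, hence faithfully flat. Combined with the isomorphism $E\times L_{\varphi}\xrightarrow{\ \sim\ }E\times_{U_{\mu}}E$, $(\epsilon,h)\mapsto(\epsilon,\epsilon h)$, this exhibits $a$ as an $L_{\varphi}$-torsor in the fppf topology; equivalently $U_{\mu}\cong E/L_{\varphi}$ $E$-equivariantly.

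\emph{Finally} I would conclude that $\Ucal_{\mu}=[E\backslash U_{\mu}]\cong[E\backslash(E/L_{\varphi})]\cong B(L_{\varphi})$. For the last isomorphism I would use the general fact that, for a subgroup scheme $H\subset E$, the $H$-torsor $E\to E/H$ induces an equivalence $[E\backslash(E/H)]\simeq B(H)$: one functor sends an $H$-torsor $\mathcal T$ to the $E$-torsor $\mathcal T\times^{H}E$ with its tautological equivariant map to $E/H$, and a quasi-inverse sends $(\mathcal I,f\colon\mathcal I\to E/H)$ to the descent along $\mathcal I\to S$ of the $H$-torsor $\mathcal I\times_{E/H}E\to\mathcal I$. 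Equivalently, one checks directly that the composite ${\rm pt}\xrightarrow{\ 1\ }U_{\mu}\to\Ucal_{\mu}$ pulls back to $a$ along the atlas $U_{\mu}\to\Ucal_{\mu}$, hence is an fppf presentation of $\Ucal_{\mu}$ whose groupoid ${\rm pt}\times_{\Ucal_{\mu}}{\rm pt}\rightrightarrows{\rm pt}$ is the constant-base action of the group $\Stab_E(1)=L_{\varphi}$, giving $\Ucal_{\mu}\cong[1/L_{\varphi}]$.

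The main obstacle is the flatness assertion in the second step: since $L_{\varphi}$ need not be reduced, one cannot get away with an argument on $k$-points and must establish $E/\Stab_E(1)\xrightarrow{\sim}E\cdot 1$ scheme-theoretically --- via miracle flatness as above, or via the general theory of orbit maps. Once that is in hand, the identification with $B(L_{\varphi})$ is a formal manipulation of quotient stacks.
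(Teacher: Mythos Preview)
Your proof is correct and follows exactly the same route as the paper: identify $U_\mu$ with $E/L_\varphi$ via the orbit map at $1$, then use $[E\backslash(E/L_\varphi)]\simeq[1/L_\varphi]$. The paper compresses all of this into two sentences and simply asserts that the action map induces $E/L_\varphi\simeq U_\mu$, whereas you supply the missing justification (miracle flatness to handle the possibly non-reduced stabilizer $L_\varphi$) and spell out the stack equivalence; your caution about scheme-theoretic versus set-theoretic orbit identification is well placed and is precisely the point the paper glosses over.
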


\begin{proof}
The action map $E\to U_\mu$, $e\mapsto e\cdot 1$ induces an isomorphism $E/L_{\varphi}\simeq U_\mu$. Hence $\Ucal_\mu =[E\backslash U_\mu]\simeq [E\backslash (E/L_{\varphi})]\simeq [1/L_{\varphi}]$.
\end{proof}

\begin{corollary}\label{cor-global-muord}
The category of vector bundles on $\Ucal_\mu$ is equivalent to the category $\Rep(L_{\varphi})$ of representations of $L_{\varphi}$. Furthermore, for all $(V,\rho)\in \Rep(L_{\varphi})$, the space of global sections of the attached vector bundle $\mathcal{V}(\rho)$ on $\Ucal_\mu$ identifies with the space of $L_\varphi$-invariants of $V$: 
\begin{equation}\label{muordsections}
    H^0(\Ucal_\mu,\mathcal{V}(\rho))=V^{L_{\varphi}}.
\end{equation}
Furthermore, this identification is functorial in $(V,\rho)$.
\end{corollary}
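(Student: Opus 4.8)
The plan is to reduce the whole statement to Proposition \ref{prop-Umu}, which identifies $\Ucal_\mu$ with the classifying stack $B(L_{\varphi})=[1/L_{\varphi}]$, and then to apply the general formalism of \S\ref{sec-genth-vb} to the atlas $X=\spec k$.

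First I would recall the standard equivalence of categories $\mathcal{V}_{H,\spec k}\colon \Rep(H)\to\VB([1/H])$ for an affine $k$-group scheme $H$. Indeed, a vector bundle on $[1/H]$ is, by fppf descent along the atlas $\spec k\to[1/H]$ (along which the fiber product $\spec k\times_{[1/H]}\spec k$ is $H$), the same datum as a finite-dimensional $k$-vector space together with descent data, i.e. an algebraic $H$-action; and the functor $\mathcal{V}_{H,\spec k}$ of \S\ref{sec-genth-vb}, which sends $(V,\rho)$ to $[H\backslash V]$, realizes this equivalence explicitly. Applying this with $H=L_{\varphi}$ (a finite, hence affine, $k$-group scheme) and transporting along the isomorphism of Proposition \ref{prop-Umu} gives the asserted equivalence $\VB(\Ucal_\mu)\simeq\Rep(L_{\varphi})$.

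Next, for the space of global sections I would specialize the identification \eqref{globquot} to $X=\spec k$ and $H=L_{\varphi}$: it reads $H^0([1/L_{\varphi}],\mathcal{V}(\rho))=\{f\colon\spec k\to V\mid f(h\cdot x)=\rho(h)f(x),\ \forall h\in L_{\varphi},\ \forall x\in \spec k\}$. A morphism $\spec k\to V$ is simply an element $v\in V$, and since $\spec k$ consists of a single ($L_{\varphi}$-fixed) point the equivariance condition collapses to $\rho(h)v=v$ for all $h\in L_{\varphi}$, i.e. $v\in V^{L_{\varphi}}$. Combined with Proposition \ref{prop-Umu}, this gives $H^0(\Ucal_\mu,\mathcal{V}(\rho))=V^{L_{\varphi}}$. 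Functoriality in $(V,\rho)$ is then automatic: the equivalence $\Rep(L_{\varphi})\simeq\VB([1/L_{\varphi}])$, the description \eqref{globquot}, and the invariants functor $V\mapsto V^{L_{\varphi}}$ are all functorial, while the isomorphism $\Ucal_\mu\simeq[1/L_{\varphi}]$ of Proposition \ref{prop-Umu} is fixed and independent of $\rho$.

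I do not expect a genuine obstacle. The only point requiring a little care is essential surjectivity of $\mathcal{V}_{L_{\varphi},\spec k}$, that is, that \emph{every} vector bundle on $[1/L_{\varphi}]$ descends from $\spec k$; but this is precisely fppf descent of quasi-coherent sheaves for the atlas $\spec k\to[1/L_{\varphi}]$, which is valid because $L_{\varphi}$ is affine (indeed finite) over $k$, so its possible non-smoothness (see Lemma \ref{lemLphi}) is harmless here.
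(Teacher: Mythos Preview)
Your proposal is correct and follows the same approach as the paper, which in fact gives no explicit proof of this corollary at all: it is stated as an immediate consequence of Proposition \ref{prop-Umu} together with the general formalism of \S\ref{sec-genth-vb}, exactly as you spell out. The only remark worth adding is that the paper, just after the corollary, emphasizes that $V^{L_{\varphi}}$ must be read scheme-theoretically (i.e.\ $\rho(x)v=v$ in $V\otimes_k R$ for all $x\in L_{\varphi}(R)$ and all $k$-algebras $R$), since $L_{\varphi}$ can be non-reduced; your use of \eqref{globquot} already encodes this, but your phrase ``$\rho(h)v=v$ for all $h\in L_{\varphi}$'' should be understood in this sense.
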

The identity \eqref{muordsections} can be seen as an isomorphism between two functors $\Rep(L_\varphi)\to \vs_k$. The notation $V^{L_{\varphi}}$ for the space of invariants is to be understood in a scheme-theoretical way as the set of $v\in V$ such that for any $k$-algebra $R$, one has $\rho(x)v=v$ in $V\otimes_k R$ for all $x\in L_{\varphi}(R)$. In particular, if $(V,\rho)\in \Rep(P)$ and $\mathcal{V}(\rho)$ is the attached vector bundle on $\GZip^\mu$, the restriction of $\mathcal{V}(\rho)$ to $\Ucal_\mu$ is attached to the restriction of $\rho$ to $L_\varphi$, and the formula \eqref{muordsections} applies similarly.

By \eqref{globquot}, any $f\in V^{L_\varphi}=H^0(\Ucal_\mu,\mathcal{V}(\rho))$ corresponds bijectively to a unique function 
\begin{equation}\label{ftilde}
   \tilde{f} \colon U_\mu\to V 
\end{equation}
satisfying $\tilde{f}(1)=f$ and $\tilde{f}(axb^{-1})=\rho(a)\tilde{f}(x)$ for all $(a,b) \in E$ and all $x\in U_\mu$. The strategy to determine the space $H^0(\GZip^\mu,\mathcal{V}(\rho))$ will be to characterize which of these functions extend to a function $G_k \to V$. We will use Corollary \ref{corphial} for this purpose. As another preliminary, we introduce (a generalization of) the Brylinski--Kostant filtration in the next section.

\subsection{Brylinski--Kostant filtration}

\begin{lemma}\label{lem:phialpha}
Let $\alpha \in \Phi$. 
Let $V$ be a finite dimensional algebraic representation of $TU_{\alpha}$. 
Let $v \in V_{\nu}$ for $\nu \in X^*(T)$. 
Then we have 
\[
 u_{\alpha}(x)(v)-v=\sum_{j=1}^{\infty} x^j v_j 
\] 
where $v_j \in V_{\nu +j \alpha}$. 
\end{lemma}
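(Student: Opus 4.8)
The plan is to exploit the fact that the orbit map $x \mapsto u_{\alpha}(x)(v)$ is a \emph{polynomial} map $\GG_{\mathrm{a}} \to V$, and then read off the $T$-weights of its coefficients from the torus-conjugation relation \eqref{eq:phiconj}. So first I would note that, since $V$ is a finite-dimensional algebraic representation of $TU_{\alpha}$ and $u_{\alpha} \colon \GG_{\mathrm{a}} \to U_{\alpha}$ is an isomorphism of algebraic groups, the map $\GG_{\mathrm{a}} \to V$, $x \mapsto u_{\alpha}(x)(v)$, is a morphism of varieties; hence there are finitely many vectors $v_0, v_1, \dots, v_N \in V$ with $u_{\alpha}(x)(v) = \sum_{j=0}^{N} x^j v_j$, this identity being valid functorially for $x$ in any $k$-algebra $R$. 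Evaluating at $x = 0$ gives $v_0 = v$, so $u_{\alpha}(x)(v) - v = \sum_{j=1}^{N} x^j v_j$; setting $v_j = 0$ for $j > N$ we may write this as the infinite sum $\sum_{j=1}^{\infty} x^j v_j$ in the statement.

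Next I would pin down the weight of each $v_j$. Applying \eqref{eq:phiconj}, for $t \in T$ one has $t\, u_{\alpha}(x)\, t^{-1} = u_{\alpha}(\alpha(t)x)$ in $TU_{\alpha}$. Acting on $v \in V_{\nu}$ and using $t^{-1}v = \nu(t)^{-1} v$, the left-hand side becomes $\nu(t)^{-1} \sum_{j} x^j\, t(v_j)$, while the right-hand side is $u_{\alpha}(\alpha(t)x)(v) = \sum_{j} \alpha(t)^j x^j\, v_j$. Comparing the coefficients of $x^j$ — legitimate since this is an equality of polynomials with coefficients in $V$, holding functorially in $R$ — yields $t(v_j) = \nu(t)\alpha(t)^j v_j = (\nu + j\alpha)(t)\, v_j$ for all $t \in T$, i.e. $v_j \in V_{\nu + j\alpha}$. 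This completes the argument.

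The proof is essentially routine and I do not expect a genuine obstacle; the only step deserving a word of care is the passage from an equality of the two polynomial expressions in $x$ to the equality of their coefficients, which is justified because the identity is one of $R$-points for all $k$-algebras $R$ (equivalently, because $k$ is infinite). I would also remark that the hypothesis that $V$ is a representation of $TU_{\alpha}$ — rather than merely of $U_{\alpha}$ — is used precisely at this last step, where the $T$-action enters.
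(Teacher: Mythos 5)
Your proof is correct and follows the same route as the paper, which simply writes $u_{\alpha}(x)v=\sum_{j\geq 0}x^j v_j$, notes $v_0=v$, and deduces $v_j\in V_{\nu+j\alpha}$ from \eqref{eq:phiconj} (citing the proof of Donkin's Proposition 3.3.2). Your write-up just makes explicit the coefficient-comparison step that the paper leaves implicit.
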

\begin{proof}
This is proved in the proof of \cite[Proposition 3.3.2]{Donkin-good-filtrations-LNM}. We recall the argument. 
We write $u_{\alpha}(x)v$ as $\sum_{j \geq 0} x^j v_j$ for some $v_j \in V$. We note that $v_0=v$. 
By \eqref{eq:phiconj}, we have $v_j \in V_{\nu +j \alpha}$. 
\end{proof}

For $\alpha \in \Phi$, 
we define $E_{\alpha}^{(j)} \colon V \to V$ by 
\[
 u_{\alpha}(x)v=\sum_{j \geq 0} x^j E_{\alpha}^{(j)}(v)
\]
for $j \geq 0$ and 
put $E_{\alpha}^{(j)}=0$ if $j <0$. 
By Lemma \ref{lem:phialpha}, 
we have $E_{\alpha}^{(j)}(v) \in V_{\nu+j\alpha}$ for $v \in V_{\nu}$. 

Let $\Xi=(\alpha_1, \ldots, \alpha_m) \in \Phi^m$. 
Let $H$ be a closed subgroup scheme of 
$G$ contaning 
$T$ and $U_{\alpha_i}$ for $1 \leq i \leq m$. 
Let $V$ be a finite dimensional algebraic representation of $H$. 
Let 
$\mathbf{a}=(a_1,\ldots,a_m) \in (k^{\times})^m$ and 
$\mathbf{r}=(r_1,\ldots,r_m) \in \RR^m$. 
We put 
\begin{align*}
 (\ZZ^m)_{\mathbf{r}} &= \left\{ (n_1,\ldots,n_m) \in \ZZ^m \relmiddle| \sum_{i=1}^m n_i r_i =0 
 \right\}, \\ 
 \Lambda_{\Xi,\mathbf{r}} &= \left\{ \sum_{i=1}^m n_i \alpha_i \relmiddle| 
 (n_1,\ldots,n_m) \in (\ZZ^m)_{\mathbf{r}}
 \right\}. 
\end{align*}
For $[\nu] \in X^*(T)/\Lambda_{\Xi,\mathbf{r}}$, we put 
\[
 V_{[\nu]}=\bigoplus_{\nu \in [\nu]} V_{\nu} . 
\] 
We use the notation $\mathbf{j}$ for  $(j_1,\ldots,j_m) \in \ZZ^m$. 
For $[\mathbf{j}] \in \ZZ^m/(\ZZ^m)_{\mathbf{r}}$ 
and 
$[\nu] \in X^*(T)/\Lambda_{\Xi,\mathbf{r}}$, 
we put 
\begin{align*}
 [\mathbf{j}] \cdot \mathbf{r} 
 &= \sum_{i=1}^m j_i r_i \in \RR, \\ 
 [\nu] +[\mathbf{j}] \cdot \Xi 
 &= \left[ \nu +\sum_{i=1}^m j_i \alpha_i \right] \in 
 X^*(T)/\Lambda_{\Xi,\mathbf{r}}, 
\end{align*}
which are well-defined. 
For $[\nu] \in X^*(T)/\Lambda_{\Xi,\mathbf{r}}$ and a function $\delta \colon X^*(T) \to \RR$, we define 
$\fil_{\delta}^{\Xi,\mathbf{a},\mathbf{r}} V_{[\nu]}$ by 
\[
 \bigcap_{[\mathbf{j}] \in \ZZ^m/(\ZZ^m)_{\mathbf{r}}} 
 \bigcap_{\substack{\chi \in [\nu] +[\mathbf{j}] \cdot \Xi, \\ [\mathbf{j}] \cdot \mathbf{r} > \delta(\chi)}}
 \Ker \left( \sum_{\mathbf{j} \in [\mathbf{j}]} 
 \pr_{\chi} \circ  a_1^{j_1} E_{\alpha_1}^{(j_1)} \circ \cdots \circ a_m^{j_m} E_{\alpha_m}^{(j_m)} \colon V_{[\nu]
 } \to 
 V_{\chi} 
 \right) 
\]
where $\pr_{\chi} \colon V_{[\nu] +[\mathbf{j}] \cdot \Xi} \to V_{\chi}$ denotes the projection.

\begin{example}\label{example-risone}
Assume that $\Xi=(\alpha) \in \Phi$, $r_1=1$ and $\delta$ is a constant function $c \in \RR$. 
Then $\Lambda_{\Xi,\mathbf{r}}=0$ and 
$V_{[\nu]}=V_{\nu}$ for $\nu \in X^*(T)$. 
In this case, 
\begin{equation}\label{equ-Filc}
 \fil_{c}^{\Xi,\mathbf{a},\mathbf{r}} V_{\nu} = 
 \bigcap_{j > c} \Ker \left( 
 E_{\alpha}^{(j)} \colon V_{\nu} \to V_{\nu+j\alpha} \right),     
\end{equation}
which we simply write 
$\fil_{c}^{\alpha} V_{\nu}$. 
This is a Brylinski--Kostant filtration 
(\cf \cite[(3.3.2)]{Xiao-Zhu-on-vector-valued}). 
\end{example}

\subsection{Main result} \label{subsec-mainresult}
We now investigate the space of global sections over $\GZip^\mu$ of the vector bundle $\mathcal{V}(\rho)$ for $(V,\rho)\in \Rep(P)$. By \eqref{muordsections}, this space is contained in $V^{L_{\varphi}}$. Conversely, the problem is to determine which $f\in V^{L_{\varphi}}$ correspond to sections of $\mathcal{V}(\rho)$ that extend from $\Ucal_\mu$ to $\GZip^\mu$. Equivalently, we ask for which $f\in V^{L_{\varphi}}$ the regular function $\tilde{f} \colon U_\mu \to V$ defined in \eqref{ftilde} extends to a regular function $G_k \to V$.

Recall the definition of the integer $m_\alpha$ in \eqref{malpha-equ} for each $\alpha\in \Delta^P$. For example, if $P$ is defined over $\FF_q$, then $m_\alpha=1$ for all $\alpha\in \Delta^P$. We put $\mathbf{a}_\alpha=(-1,\ldots, -1) \in (k^{\times})^{m_\alpha}$. For $\alpha \in \Delta^P$, 
we put 
$\Xi_{\alpha}=(-\alpha, \sigma^{-1}(\alpha),\ldots,
 \sigma^{-(m_{\alpha}-1)}(\alpha))$
and 
$\mathbf{r}_{\alpha}=(r_{\alpha,1}, \ldots, r_{\alpha,m_{\alpha}})$, 
where $r_{\alpha,1}=1-\langle \alpha,\delta_{\alpha}  \rangle$ and 
\[
 r_{\alpha,i}=\frac{\langle \alpha,\delta_{\alpha}  \rangle -1}{q^{i-1}}  
\]
for $2 \leq i \leq m_{\alpha}$. 
We view $\delta_{\alpha}$ as 
a function $X^*(T) \to \RR$ 
by $\chi \mapsto \langle \chi ,\delta_{\alpha} \rangle$. 

\begin{theorem}\label{thm-main}
Let $(V,\rho)\in \Rep(P)$. Via the inclusion $H^0(\GZip^\mu,\mathcal{V}(\rho))\subset V^{L_{\varphi}}$ (see Corollary \ref{cor-global-muord}) one has an identification
\begin{equation}\label{eq-main}
H^0(\GZip^\mu,\mathcal{V}(\rho))=V^{L_{\varphi}}\cap 
 \bigcap_{\alpha \in \Delta^P} 
 \bigoplus_{[\nu] \in X^*(T)/\Lambda_{\Xi_{\alpha},\mathbf{r}_{\alpha}}} 
 \fil_{\delta_{\alpha}}^{\Xi_{\alpha},\mathbf{a}_{\alpha},\mathbf{r}_{\alpha}} V_{[\nu]} .
\end{equation}
\end{theorem}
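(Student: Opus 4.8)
The plan is to follow the strategy of \S\ref{sec-adap} and \S\ref{subsec-ordloc}: first reduce to a one-variable extension problem along the maps $\psi_{\alpha}$, then compute the pulled-back sections explicitly, and finally recognize the resulting condition as the Brylinski--Kostant condition. First I would use Corollary \ref{cor-global-muord} to identify $H^{0}(\GZip^{\mu},\mathcal{V}(\rho))$ with the set of $f\in V^{L_{\varphi}}$ for which the $E$-equivariant function $\widetilde{f}\colon U_{\mu}\to V$ determined by $\widetilde{f}(1)=f$ (see \eqref{ftilde}) extends to a regular map $G_{k}\to V$; by Corollary \ref{corphial}, applied to the coordinate functions of $\widetilde{f}$, this happens if and only if for every $\alpha\in\Delta^{P}$ the map $\widetilde{f}\circ\psi_{\alpha}\colon Y_{0}\to V$ extends to $Y=E\times\AA^{1}$. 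Writing $\psi_{\alpha}((x,y),t)=x\,\phi_{\alpha}(A(t))\,y^{-1}$ with $(x,y)\in E$ and using that $\phi_{\alpha}(A(t))\in U_{\mu}$ for $t\neq 0$ (Proposition \ref{psiadapted}), $E$-equivariance gives $\widetilde{f}(\psi_{\alpha}((x,y),t))=\rho(x)\,\widetilde{f}(\phi_{\alpha}(A(t)))$; since $\rho(x)\in\GL(V)$ is invertible and does not depend on $t$, $\widetilde{f}\circ\psi_{\alpha}$ extends to $Y$ if and only if the single-variable morphism $F_{\alpha}\colon\GG_{\mathrm{m}}\to V$, $t\mapsto\widetilde{f}(\phi_{\alpha}(A(t)))$, extends to $\AA^{1}$. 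So the theorem reduces to proving, for each $\alpha\in\Delta^{P}$, that $F_{\alpha}$ extends to $\AA^{1}$ if and only if $f\in\bigoplus_{[\nu]}\fil_{\delta_{\alpha}}^{\Xi_{\alpha},\mathbf{a}_{\alpha},\mathbf{r}_{\alpha}}V_{[\nu]}$.

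Next I would make $F_{\alpha}$ explicit. Using the factorization obtained in the proof of Proposition \ref{psiadapted}, namely $\phi_{\alpha}(A(t))=g_{1}(t)g_{2}(t)^{-1}$ with $(g_{1}(t),g_{2}(t))\in E$ and $g_{1}(t)=u_{-\alpha}(-t^{-1})\,\delta_{\alpha}(t)\,u_{t,\alpha}\in P$, one gets $F_{\alpha}(t)=\rho(g_{1}(t))f$. (Here one first substitutes $t\mapsto t^{N}$ for a suitable $N$ so that $\delta_{\alpha}(t)$ and $u_{t,\alpha}$ become genuine; this is harmless because $\wp_{*}$ is $\QQ$-linear, so $\delta_{\alpha}\in X_{*}(T)_{\QQ}$, and a morphism from $\GG_{\mathrm{m}}$ extends across $0$ precisely when its pullback along $t\mapsto t^{N}$ does.) Conjugating the torus factor $\delta_{\alpha}(t)$ to the left and inserting the numerical identities $t_{\alpha}=t^{r_{\alpha,1}}$ and $r_{\alpha,i+1}=-r_{\alpha,1}/q^{i}$, together with the action of the Frobenius $\varphi$ on root groups, rewrites $g_{1}(t)=\delta_{\alpha}(t)\prod_{k=1}^{m_{\alpha}}u_{\beta_{k}}(-t^{-r_{\alpha,k}})$, where $\Xi_{\alpha}=(\beta_{1},\dots,\beta_{m_{\alpha}})$. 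Expanding each factor via $u_{\beta_{k}}(s)v=\sum_{j\ge 0}s^{j}E_{\beta_{k}}^{(j)}(v)$ and tracking the $T$-weight gradings then yields, for $f=\sum_{\nu}f_{\nu}$,
\begin{equation*}
F_{\alpha}(t)=\sum_{\nu}\ \sum_{\mathbf{j}\ge 0}\ (-1)^{|\mathbf{j}|}\,t^{\langle\nu+\mathbf{j}\cdot\Xi_{\alpha},\delta_{\alpha}\rangle-\mathbf{j}\cdot\mathbf{r}_{\alpha}}\ E_{\beta_{1}}^{(j_{1})}\circ\cdots\circ E_{\beta_{m_{\alpha}}}^{(j_{m_{\alpha}})}(f_{\nu}),
\end{equation*}
where $\mathbf{j}\cdot\Xi_{\alpha}=\sum_{k}j_{k}\beta_{k}$, $\mathbf{j}\cdot\mathbf{r}_{\alpha}=\sum_{k}j_{k}r_{\alpha,k}$, $|\mathbf{j}|=\sum_{k}j_{k}$, and the displayed summand lies in $V_{\nu+\mathbf{j}\cdot\Xi_{\alpha}}$.

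Finally I would match this expansion with the filtration. Collecting the summands of $F_{\alpha}$ by their target weight $\chi$ and by the class $[\mathbf{j}]\in\ZZ^{m_{\alpha}}/(\ZZ^{m_{\alpha}})_{\mathbf{r}_{\alpha}}$ (on which $\mathbf{j}\cdot\mathbf{r}_{\alpha}$ is constant), one checks that the source weights $\nu=\chi-\mathbf{j}\cdot\Xi_{\alpha}$ occurring for a fixed pair $(\chi,[\mathbf{j}])$ all lie in a single class $[\nu]\in X^{*}(T)/\Lambda_{\Xi_{\alpha},\mathbf{r}_{\alpha}}$, namely the unique one with $\chi\in[\nu]+[\mathbf{j}]\cdot\Xi_{\alpha}$; hence the coefficient of $t^{\langle\chi,\delta_{\alpha}\rangle-[\mathbf{j}]\cdot\mathbf{r}_{\alpha}}$ in the $V_{\chi}$-component of $F_{\alpha}$ is precisely the image of the $V_{[\nu]}$-component of $f$ under the map $\sum_{\mathbf{j}\in[\mathbf{j}]}(-1)^{|\mathbf{j}|}\,\pr_{\chi}\circ E_{\beta_{1}}^{(j_{1})}\circ\cdots\circ E_{\beta_{m_{\alpha}}}^{(j_{m_{\alpha}})}\colon V_{[\nu]}\to V_{\chi}$ occurring in the definition of $\fil_{\delta_{\alpha}}^{\Xi_{\alpha},\mathbf{a}_{\alpha},\mathbf{r}_{\alpha}}V_{[\nu]}$ (recall $\mathbf{a}_{\alpha}=(-1,\dots,-1)$). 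Because each $(\chi,[\mathbf{j}])$ involves only this one class $[\nu]$, no cancellation between distinct classes is possible, so the extension of $F_{\alpha}$ decouples over $[\nu]$, and $F_{\alpha}$ extends to $\AA^{1}$ if and only if every such coefficient with negative exponent (i.e.\ with $[\mathbf{j}]\cdot\mathbf{r}_{\alpha}>\delta_{\alpha}(\chi)$) vanishes, i.e.\ if and only if $f\in\bigoplus_{[\nu]}\fil_{\delta_{\alpha}}^{\Xi_{\alpha},\mathbf{a}_{\alpha},\mathbf{r}_{\alpha}}V_{[\nu]}$. Intersecting over $\alpha\in\Delta^{P}$ and with $V^{L_{\varphi}}$ gives \eqref{eq-main}. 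I expect the main obstacle to be this last matching step, especially verifying that each (weight, power of $t$) coefficient of $F_{\alpha}$ sees only a single $\Lambda_{\Xi_{\alpha},\mathbf{r}_{\alpha}}$-class of $f$ --- which is what forces the problem to decouple over $[\nu]$ and makes the multi-index kernels of the $\fil$-construction appear --- together with the precise numerical identities and the $\varphi$-action on root groups needed in the previous step to put $g_{1}(t)$ into the product form $\delta_{\alpha}(t)\prod_{k}u_{\beta_{k}}(-t^{-r_{\alpha,k}})$ with exponents matching $\mathbf{r}_{\alpha}$.
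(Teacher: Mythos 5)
Your proposal is correct and follows essentially the same route as the paper's proof: reduce via Corollary \ref{corphial} to the extension of the one-variable function $F_{\alpha}(t)=\rho(g_1(t))f$ coming from the factorization \eqref{eq:phiadec}, expand it with the operators $E^{(j)}_{\beta}$, collect terms by target weight and by coset $[\mathbf{j}]\in\ZZ^{m_\alpha}/(\ZZ^{m_\alpha})_{\mathbf{r}_\alpha}$, and identify the vanishing of the negative-exponent coefficients with membership in $\bigoplus_{[\nu]}\fil_{\delta_\alpha}^{\Xi_\alpha,\mathbf{a}_\alpha,\mathbf{r}_\alpha}V_{[\nu]}$. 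The only point where you go beyond the paper is the explicit $t\mapsto t^N$ substitution to handle the fractional exponents arising from $\delta_\alpha\in X_*(T)_{\QQ}$, which the paper leaves implicit; this is a harmless refinement, not a different argument.
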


\begin{proof}
Let $f\in V^{L_{\varphi}}$, and let $\tilde{f} \colon U_\mu\to V$ be the function defined in \eqref{ftilde}. It suffices to show: $\tilde{f}$ extends to $G$ if and only if 
\[
 f \in 
  \bigoplus_{[\nu] \in X^*(T)/\Lambda_{\Xi_{\alpha},\mathbf{r}_{\alpha}}} 
 \fil_{\delta_{\alpha}}^{\Xi_{\alpha},\mathbf{a}_{\alpha},\mathbf{r}_{\alpha}} V_{[\nu]}
\] 
for all $\alpha \in \Delta^P$. By Corollary \ref{corphial}, $\tilde{f}$ extends to $G_k$ if and only if $\tilde{f}\circ \psi_\alpha  \colon  Y_0\to V$ extends to a function $Y\to V$. 
We now give an explcit formula for $\tilde{f}\circ \psi_\alpha ((x,y),t)$. Using \eqref{eq:phiadec} and \eqref{eq:inE}, the element $\psi_\alpha ((x,y),t)\in U$ can be written as $x_1 x_2^{-1}$ with 
$(x_1,x_2)\in E$ and
\begin{align*}
 x_1  = x 
      \phi_{\alpha} \left(\begin{pmatrix}
1&0\\-t^{-1}&1
\end{pmatrix} \right) \delta_{\alpha}(t) 
u_{t,\alpha} , \quad 
 x_2=y 
\varphi(\delta_{\alpha}(t))
 \phi_{\alpha} \left(\begin{pmatrix}
1& -t_{\alpha} \\ 0&1
\end{pmatrix} \right) 
u_{t,\alpha}. 
\end{align*}
It follows:
\begin{align*}
    (\tilde{f}\circ \psi_\alpha) ((x,y),t)
    =\tilde{f}(x_1 x_2^{-1})=\rho(x_1)f 
     = \rho(x) \rho \left(       \phi_{\alpha} \left(\begin{pmatrix}
1&0\\-t^{-1}&1
\end{pmatrix} \right) 
\delta_{\alpha}(t) 
u_{t,\alpha} 
\right) f. 
\end{align*}
Hence, the function $\tilde{f}\circ \psi_\alpha$ extends to $Y$ if and only if the function 
\[
 F_\alpha \colon t\mapsto \rho \left(       \phi_{\alpha} \left(\begin{pmatrix}
1&0\\-t^{-1}&1
\end{pmatrix} \right) 
\delta_{\alpha}(t) 
u_{t,\alpha} 
\right) f
\] 
lies in $k[t]\otimes V$. 
Write $f=\sum_{\nu \in X^*(T)} f_{\nu}$ by the weight decomposition of $f$. 
We put 
\begin{equation}
f_{\nu,\Xi_{\alpha}}^{\mathbf{j}} = E_{-\alpha}^{(j_1)} E_{\sigma^{-1}(\alpha)}^{(j_2)}
\cdots
E_{\sigma^{-(m_{\alpha}-1)}(\alpha)}^{(j_{m_{\alpha}})} f_{\nu} \in V_{\nu +\mathbf{j} \cdot \Xi_{\alpha}} 
\end{equation}
for $\mathbf{j}=(j_1,\ldots,j_{m_{\alpha}}) \in \ZZ^{m_{\alpha}}$ and $\nu \in X^*(T)$. 
We obtain 
\begin{align*}
 F_\alpha(t) &= 
 \rho \left( \delta_{\alpha}(t) 
 \phi_{\alpha} \left(\begin{pmatrix}
1&0\\-\alpha(\delta_{\alpha}(t)) t^{-1}&1
\end{pmatrix} \right)  
u_{t,\alpha} 
\right) f \\ 
 &= 
 \sum_{\nu} 
 \rho \left( \delta_{\alpha}(t) 
 \phi_{\alpha} \left(\begin{pmatrix}
1&0\\- t^{\langle \alpha, \delta_{\alpha} \rangle-1}&1
\end{pmatrix} \right)  
\prod_{i=2}^{m_{\alpha}} \phi_{\sigma^{-(i-1)}(\alpha)} 
\left(\begin{pmatrix}
1&- t_{\alpha}^{\frac{1}{q^{i-1}}}\\0&1
\end{pmatrix} \right) 
\right)
f_{\nu}\\
&=
 \sum_{\nu} 
 \rho \left( \delta_{\alpha}(t) \right) 
 \sum_{\mathbf{j} \in \ZZ^{m_{\alpha}}} 
 \left( 
 (-t^{\langle \alpha, \delta_{\alpha} \rangle-1})^{j_1}  
 \prod_{i=2}^{m_{\alpha}}  (-t_{\alpha}^{\frac{1}{q^{i-1}}})^{j_i} \right) 
 f_{\nu,\Xi_{\alpha}}^{\mathbf{j}}  \\ 
 &=
  \sum_{\nu} 
 \sum_{\mathbf{j} \in \ZZ^{m_{\alpha}}} 
  t^{\langle \nu +\mathbf{j} \cdot \Xi_{\alpha} ,  \delta_{\alpha}  \rangle} 
 \left( 
 (-t^{\langle \alpha, \delta_{\alpha} \rangle-1})^{j_1}  
 \prod_{i=2}^{m_{\alpha}}  (-t_{\alpha}^{\frac{1}{q^{i-1}}})^{j_i} \right) 
 f_{\nu,\Xi_{\alpha}}^{\mathbf{j}} . 
\end{align*}
For fixed $\chi \in X^*(T)$, let 
$F_{\alpha,\chi}(t)$ be 
the $V_{\chi}$-component of $F_\alpha(t)$. 
Then we have 
\begin{align*}
 F_{\alpha,\chi}(t) &= 
 \sum_{\mathbf{j} \in \ZZ^{m_{\alpha}}} 
 t^{\langle \chi , \delta_{\alpha}  \rangle} 
 \left( 
 (-t^{\langle \alpha , \delta_{\alpha}  \rangle-1})^{j_1}  
 \prod_{i=2}^{m_{\alpha}} (- t_{\alpha}^{\frac{1}{q^{i-1}}})^{j_i} 
 \right) 
 f_{\chi -\mathbf{j} \cdot \Xi_{\alpha},\Xi_{\alpha}}^{\mathbf{j}} \\ 
 &=\sum_{[\mathbf{j}] \in \ZZ^{m_{\alpha}}/(\ZZ^{m_{\alpha}})_{\mathbf{r}_{\alpha}}} 
 \sum_{\mathbf{j} \in [\mathbf{j}]} 
 t^{\langle \chi , \delta_{\alpha}  \rangle -\mathbf{j} \cdot \mathbf{r}_{\alpha}} (-1)^{\sum_{i=1}^{m_{\alpha}} j_i} 
 f_{\chi -\mathbf{j} \cdot \Xi_{\alpha}
,\Xi_{\alpha}}^{\mathbf{j}} . 
\end{align*}
The exponents of $t$ in two terms 
in the last expression are equal if and only if the indices belong to the same coset in 
$\ZZ^{m_{\alpha}}/(\ZZ^{m_{\alpha}})_{\mathbf{r}_{\alpha}}$. 
Therefore, 
$F_{\alpha,\chi}(t)$ lies in $k[t]\otimes V_{\chi}$ for all $\chi \in X^*(T)$ 
if and only if we have 
\[
 \sum_{\mathbf{j} \in [\mathbf{j}]} 
(-1)^{\sum_{i=1}^{m_{\alpha}} j_i} 
 f_{\chi -\mathbf{j} \cdot \Xi_{\alpha},\Xi_{\alpha}}^{\mathbf{j}} 
 =0  
\]
for all $\chi \in X^*(T)$ and 
$[\mathbf{j}] \in \ZZ^{m_{\alpha}}/(\ZZ^{m_{\alpha}})_{\mathbf{r}_{\alpha}}$ such that 
$\mathbf{j} \cdot \mathbf{r}_{\alpha} > \langle \chi , \delta_{\alpha}  \rangle$. 
This condition is equivalent to that 
$f$ belongs to 
$  \bigoplus_{[\nu] \in X^*(T)/\Lambda_{\Xi_{\alpha},\mathbf{r}_{\alpha}}} 
 \fil_{\delta_{\alpha}}^{\Xi_{\alpha},\mathbf{a}_{\alpha},\mathbf{r}_{\alpha}} V_{[\nu]}$. 
Hence the claim follows. 
\end{proof}

We now give some corollaries of Theorem \ref{thm-main} in case where the formula \eqref{eq-main} becomes simpler. For $\nu \in X^*(T)$ and 
$\chi \in X^*(T)_{\RR}$, 
we put 
\begin{equation}\label{equ-FilPchi}
 \fil_{\chi}^P 
 V_{\nu} = 
 \bigcap_{\alpha \in \Delta^P} 
 \fil_{\langle \chi, \alpha^\vee \rangle}^{-\alpha} 
 V_{\nu}    
\end{equation}
where $\fil_{\langle \chi, \alpha^\vee \rangle}^{-\alpha} V_{\nu}$ was defined in Example \ref{example-risone}. The morphism $\wp \colon T \to T$ induces the isomorphism 
\begin{equation}\label{equ-Pupstar}
 \wp^* \colon X^*(T)_{\RR} \stackrel{\sim}{\longrightarrow} X^*(T)_{\RR};\ \lambda \mapsto \lambda \circ \wp  = \lambda -q\sigma^{-1} (\lambda).    
\end{equation}

\begin{corollary}\label{cor-main-Fq}
Assume that $P$ is defined over $\FF_q$. 
Let $(V,\rho)\in \Rep(P)$. Via the inclusion $H^0(\GZip^\mu,\mathcal{V}(\rho))\subset V^{L(\FF_q)}$ one has
\begin{equation*}
H^0(\GZip^\mu,\mathcal{V}(\rho))=V^{L(\FF_q)}\cap 
 \bigoplus_{\nu \in X^*(T)} 
 \fil_{{\wp^*}^{-1}(\nu)}^P 
 V_{\nu}.
\end{equation*}
\end{corollary}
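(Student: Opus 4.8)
The plan is to derive Corollary~\ref{cor-main-Fq} from Theorem~\ref{thm-main} by specialising the tuples $(\Xi_\alpha,\mathbf{a}_\alpha,\mathbf{r}_\alpha)$ to the case where $P$ is defined over $\FF_q$, and then rewriting the resulting intersection of generalised Brylinski--Kostant filtrations in the simpler form \eqref{equ-FilPchi}.

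First I would record the simplifications coming from the hypothesis. Since $P$ is defined over $\FF_q$, the subset $I=I_P\subset\Delta$ is stable under $\sigma$, so for every $\alpha\in\Delta^P$ the integer $m_\alpha$ of \eqref{malpha-equ} equals $1$; hence $\Xi_\alpha=(-\alpha)$ is a length-one tuple, $\mathbf{a}_\alpha=(-1)$, and $\mathbf{r}_\alpha=(r_{\alpha,1})$ with $r_{\alpha,1}=1-\langle\alpha,\delta_\alpha\rangle$. Also, by Lemma~\ref{lemLphi} we have $L_\varphi=L(\FF_q)$, so the ambient space $V^{L_\varphi}$ of Theorem~\ref{thm-main} is the space $V^{L(\FF_q)}$ in the statement. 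The point that must be checked here is that $r_{\alpha,1}\neq 0$, i.e. $\langle\alpha,\delta_\alpha\rangle\neq 1$; I would verify this by writing $\delta_\alpha=\wp_*^{-1}(\alpha^\vee)$ in terms of the finite-order action of Frobenius on $X_*(T)$ together with a short estimate. This nonvanishing is essential: if $r_{\alpha,1}=0$ then $\mathbf{r}_\alpha=0$, $\Lambda_{\Xi_\alpha,\mathbf{r}_\alpha}$ becomes nonzero, and the generalised filtration would involve the composite operator $\sum_j(-1)^jE_{-\alpha}^{(j)}$ on full $\alpha$-strings rather than the individual $E_{-\alpha}^{(j)}$ on single weight spaces, so the clean formula \eqref{equ-FilPchi} would fail.

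Next, assuming $r_{\alpha,1}\neq 0$ I would unwind $\fil_{\delta_\alpha}^{\Xi_\alpha,\mathbf{a}_\alpha,\mathbf{r}_\alpha}V_{[\nu]}$. Here $(\ZZ^1)_{\mathbf{r}_\alpha}=0$, hence $\Lambda_{\Xi_\alpha,\mathbf{r}_\alpha}=0$, $V_{[\nu]}=V_\nu$ for each $\nu\in X^*(T)$, and $\ZZ^1/(\ZZ^1)_{\mathbf{r}_\alpha}=\ZZ$. The defining intersection then becomes $\bigcap_j\Ker\bigl(E_{-\alpha}^{(j)}\colon V_\nu\to V_{\nu-j\alpha}\bigr)$, the intersection being over those $j$ with $j\,r_{\alpha,1}>\langle\nu-j\alpha,\delta_\alpha\rangle$; substituting $r_{\alpha,1}=1-\langle\alpha,\delta_\alpha\rangle$ this condition collapses to $j>\langle\nu,\delta_\alpha\rangle$, so $\fil_{\delta_\alpha}^{\Xi_\alpha,\mathbf{a}_\alpha,\mathbf{r}_\alpha}V_\nu=\fil_{\langle\nu,\delta_\alpha\rangle}^{-\alpha}V_\nu$ in the notation of Example~\ref{example-risone}. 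Finally, since $\wp\colon T\to T$ induces mutually adjoint maps $\wp^*$ on $X^*(T)$ and $\wp_*$ on $X_*(T)$ and $\delta_\alpha=\wp_*^{-1}(\alpha^\vee)$, a direct check with \eqref{def-Pstar} and \eqref{equ-Pupstar} gives $\langle\nu,\delta_\alpha\rangle=\langle(\wp^*)^{-1}(\nu),\alpha^\vee\rangle$, whence $\fil_{\langle\nu,\delta_\alpha\rangle}^{-\alpha}V_\nu=\fil_{\langle(\wp^*)^{-1}(\nu),\alpha^\vee\rangle}^{-\alpha}V_\nu$.

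It remains to assemble the pieces. Plugging the above into Theorem~\ref{thm-main} yields $H^0(\GZip^\mu,\mathcal{V}(\rho))=V^{L(\FF_q)}\cap\bigcap_{\alpha\in\Delta^P}\bigoplus_{\nu\in X^*(T)}\fil_{\langle(\wp^*)^{-1}(\nu),\alpha^\vee\rangle}^{-\alpha}V_\nu$. Each filtration piece $\fil_{\langle(\wp^*)^{-1}(\nu),\alpha^\vee\rangle}^{-\alpha}V_\nu$ is contained in the single weight space $V_\nu$, and the weight components of a vector are uniquely determined, so one may interchange $\bigcap_\alpha$ with $\bigoplus_\nu$ to get $\bigoplus_\nu\bigcap_{\alpha\in\Delta^P}\fil_{\langle(\wp^*)^{-1}(\nu),\alpha^\vee\rangle}^{-\alpha}V_\nu=\bigoplus_\nu\fil_{(\wp^*)^{-1}(\nu)}^PV_\nu$ by the definition \eqref{equ-FilPchi}, which is the asserted formula. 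The only genuinely delicate step is the nonvanishing $r_{\alpha,1}\neq 0$; everything else is routine bookkeeping with the definitions.
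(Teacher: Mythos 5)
Your proof is correct and is essentially the paper's own argument: specialize Theorem \ref{thm-main} using $m_\alpha=1$ and $L_\varphi=L(\FF_q)$, identify $\fil_{\delta_\alpha}^{\Xi_\alpha,\mathbf{a}_\alpha,\mathbf{r}_\alpha}V_{[\nu]}$ with $\fil^{-\alpha}_{\langle\nu,\delta_\alpha\rangle}V_\nu=\fil^{-\alpha}_{\langle(\wp^*)^{-1}\nu,\alpha^\vee\rangle}V_\nu$, and reassemble via \eqref{equ-FilPchi}. The one point you flag, $r_{\alpha,1}\neq 0$, is indeed needed (solely to force $(\ZZ^1)_{\mathbf{r}_\alpha}=0$ and hence $V_{[\nu]}=V_\nu$; as your own computation shows, the inequality $jr_{\alpha,1}>\delta_\alpha(\nu-j\alpha)$ reduces to $j>\langle\nu,\delta_\alpha\rangle$ irrespective of the sign of $r_{\alpha,1}$) and is left implicit in the paper; it does hold, since writing $\delta_\alpha=(1-q^d)^{-1}\sum_{i=0}^{d-1}q^i\sigma^i(\alpha^\vee)$ with $d$ the order of $\sigma$ and using $\langle\alpha,\sigma^i(\alpha^\vee)\rangle\geq -1$ whenever $\sigma^i(\alpha)\neq\alpha$ (a diagram automorphism cannot carry a simple root to a neighbour across a multiple bond) gives $\langle\alpha,\delta_\alpha\rangle<1$, i.e.\ $r_{\alpha,1}>0$.
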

\begin{proof}
For $\alpha \in \Delta^P$ and 
$\nu \in X^*(T)$, 
we have 
\[
 \fil_{\delta_{\alpha}}^{\Xi_{\alpha},\mathbf{a}_{\alpha},\mathbf{r}_{\alpha}} V_{[\nu]} = 
 \fil_{\langle \nu,\delta_{\alpha} \rangle}^{-\alpha} V_{\nu} 
 = \fil_{\langle {\wp^*}^{-1}(\nu),\alpha^{\vee}  \rangle}^{-\alpha} V_{\nu} . 
\]
Hence the claim follows from 
Lemma \ref{lemLphi}\eqref{lemLphi-item3} and  Theorem \ref{thm-main}. 
\end{proof}

Assume again that $P$ is defined over $\FF_q$. To simplify further, assume that $(V,\rho)\in \Rep(P)$ is trivial on the unipotent radical $R_{\mathrm{u}}(P)$. Then we have $E^{(j)}_{-\alpha}=0$ for all $\alpha\in \Delta^P$ and all $j>0$. It follows that $\fil_{c}^{-\alpha} V_{\nu}=V_\nu$ for $c\geq 0$ and $\fil_{c}^{-\alpha} V_{\nu}=0$ for $c<0$. We obtain that for all $\chi\in X^*(T)_\RR$, one has
\begin{equation}
\fil_{\chi}^{P} V_{\nu}=
\begin{cases}
V_{\nu} & \textrm{ if for all }\alpha\in \Delta^P \textrm{ one has} \ \langle \chi, \alpha^\vee \rangle \geq 0,  \\
0 & \textrm{ otherwise.} 
\end{cases}
\end{equation}
Define a subspace $V_{\geq 0}^{\Delta^P}\subset V$ as follows: 
\begin{equation}\label{equ-VDeltaP}
V_{\geq 0}^{\Delta^P} = \bigoplus_{\substack{\langle \nu,\delta_{\alpha} \rangle \geq 0, \ 
 \forall \alpha\in \Delta^P}} V_\nu.
\end{equation}
For example, if $T$ is split over $\FF_q$, then 
$\delta_{\alpha}=-\alpha^\vee /(q-1)$, 
and therefore $V_{\geq 0}^{\Delta^P}$ is the direct sum of the weight spaces $V_\nu$ for those $\nu\in X^*(T)$ satisfying $\langle \nu,\alpha^\vee \rangle \leq 0$ for all $\alpha \in \Delta^P$.

\begin{corollary}\label{cor-Fq-Levi}
Assume that $P$ is defined over $\FF_q$ and furthermore that $(V,\rho)\in \Rep(P)$ is trivial on the unipotent radical $R_{\mathrm{u}}(P)$. Then one has an equality
\begin{equation}
H^0(\GZip^\mu,\mathcal{V}(\rho))=V^{L(\FF_q)}\cap V_{\geq 0}^{\Delta^P} .
\end{equation}
\end{corollary}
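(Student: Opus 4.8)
The plan is to derive this directly from Corollary~\ref{cor-main-Fq} by computing what the Brylinski--Kostant filtration becomes when $(V,\rho)$ is trivial on $R_{\mathrm{u}}(P)$. First I would record that for every $\alpha\in\Delta^P$ the root group $U_{-\alpha}$ lies in $R_{\mathrm{u}}(P)$ (as $\alpha\notin I_P$, the root $-\alpha$ is a root of the unipotent radical of $P$), so $\rho$ being trivial on $R_{\mathrm{u}}(P)$ forces $E_{-\alpha}^{(j)}=0$ for all $j\geq 1$. Recalling from Example~\ref{example-risone} that $\fil_{c}^{-\alpha}V_{\nu}=\bigcap_{j>c}\Ker\bigl(E_{-\alpha}^{(j)}\colon V_\nu\to V_{\nu-j\alpha}\bigr)$, the only nontrivial operator that can occur in this intersection is $E_{-\alpha}^{(0)}=\iden$, whose kernel is $0$, and it occurs exactly when $0>c$. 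Hence $\fil_{c}^{-\alpha}V_{\nu}=V_\nu$ for $c\geq 0$ and $\fil_{c}^{-\alpha}V_{\nu}=0$ for $c<0$, and therefore, by \eqref{equ-FilPchi}, for any $\chi\in X^*(T)_\RR$ we get $\fil_{\chi}^{P}V_{\nu}=V_\nu$ if $\langle\chi,\alpha^\vee\rangle\geq 0$ for all $\alpha\in\Delta^P$, and $\fil_{\chi}^{P}V_{\nu}=0$ otherwise.

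The second step is to feed this into Corollary~\ref{cor-main-Fq}. Using the identity $\langle\nu,\delta_\alpha\rangle=\langle{\wp^*}^{-1}(\nu),\alpha^\vee\rangle$ (with $\delta_\alpha=\wp_*^{-1}(\alpha^\vee)$, exactly as in the proof of Corollary~\ref{cor-main-Fq}), the condition $\langle{\wp^*}^{-1}(\nu),\alpha^\vee\rangle\geq 0$ for all $\alpha\in\Delta^P$ is precisely the condition defining $V_{\geq 0}^{\Delta^P}$ in \eqref{equ-VDeltaP}. Consequently
\[
\bigoplus_{\nu\in X^*(T)}\fil_{{\wp^*}^{-1}(\nu)}^{P}V_{\nu}=\bigoplus_{\substack{\langle\nu,\delta_\alpha\rangle\geq 0,\ \forall\alpha\in\Delta^P}}V_\nu=V_{\geq 0}^{\Delta^P},
\]
so intersecting with $V^{L(\FF_q)}$ and invoking Corollary~\ref{cor-main-Fq} yields $H^0(\GZip^\mu,\mathcal{V}(\rho))=V^{L(\FF_q)}\cap V_{\geq 0}^{\Delta^P}$.

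I do not expect a genuine obstacle: the statement is a straightforward specialization of Corollary~\ref{cor-main-Fq}. The only points that require care are the bookkeeping in the degenerate case of the filtration — in particular making sure the $j=0$ term $\Ker(\iden)=0$ is included in the intersection exactly when $c<0$ — and keeping the duality between $\wp^*$ and $\wp_*$ straight so that the sign condition on weights matches \eqref{equ-VDeltaP}. One should also note that $L_\varphi=L(\FF_q)$ in this setting by Lemma~\ref{lemLphi}\eqref{lemLphi-item3}, which is already incorporated in Corollary~\ref{cor-main-Fq}.
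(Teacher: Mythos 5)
Your argument is correct and is essentially the paper's own derivation: the paper likewise notes that triviality on $R_{\mathrm{u}}(P)$ forces $E_{-\alpha}^{(j)}=0$ for $j>0$, deduces that $\fil_{\chi}^{P}V_{\nu}$ is $V_\nu$ or $0$ according to whether $\langle\chi,\alpha^\vee\rangle\geq 0$ for all $\alpha\in\Delta^P$, and then specializes Corollary \ref{cor-main-Fq}. The identification $\langle\nu,\delta_\alpha\rangle=\langle{\wp^*}^{-1}(\nu),\alpha^\vee\rangle$ you use to match the condition with \eqref{equ-VDeltaP} is exactly the one appearing in the proof of Corollary \ref{cor-main-Fq}, so nothing is missing.
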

This formula recovers the result \cite[Theorem 1]{Koskivirta-automforms-GZip} (with slightly different notation). In \loccitn, only the special case when $G$ is split over $\FF_p$ and $V$ is of the form $V_I (\lambda)$ was considered.

\subsection{Perfection}\label{subsec-perfect}
As noted in Remark \ref{rem-shtukas}, the perfection of the stack of $G$-zips appears in connection with the moduli of local shtukas. In \cite[Lemma 5.3.6]{Xiao-Zhu-geometric-satake}, the zip datum that appears satisfes that $P$ is defined over $\FF_q$. We do not make this assumption here. For a scheme $X$ over $k$, define the perfection of $X$ as the projective limit 
\[
 X^{\pf}:=\varprojlim_{\varphi_X} X 
\]
where $\varphi_X$ denotes the absolute $q$-th power Frobenius endomorphism of $X$. 
There is a natural map $X^{\pf}\to X$. 
We have an isomorphism 
\[
 X^{\pf} \simeq \varprojlim 
 \left( \cdots \xrightarrow{\varphi} X^{(q^{-2})} \xrightarrow{\varphi} X^{(q^{-1})} \xrightarrow{\varphi} X \right) 
\]
where $\varphi$ denotes the relative $q$-th power Frobenius endomorphism. 
The perfection of $\GZip^\mu$ is then given by
\begin{equation}
\GZip^{\mu,\pf}=[E^{\pf}\backslash G_k^{\pf}].
\end{equation}
Similarly to Proposition \ref{prop-Umu}, the perfection of the $\mu$-ordinary locus $\Ucal_\mu^{\pf}$ is isomorphic to $[1/L_{\varphi}^{\pf}]$. Since $L_{\varphi}=L_{\varphi}^\circ\rtimes L_0(\FF_q)$ by Lemma \ref{lemLphi}\eqref{lemLphi-item2}, we obtain
\begin{equation}\label{equ-Umupf}
    \Ucal_\mu^{\pf}=[1/L_0(\FF_q)].
\end{equation}

If $(V,\rho)$ is a $P$-representation, then we obtain a $P^{\pf}$-representation by pull-back, which we denote by $\rho^{\pf}$. 
This yields a vector bundle $\mathcal{V}(\rho^{\pf})$ on $\GZip^{\mu,\pf}$, which also coincides with the pull-back of $\mathcal{V}(\rho)$ under the natural map $\GZip^{\mu,\pf}\to \GZip^\mu$. By the equation \eqref{equ-Umupf} above, we see that the space $H^0(\GZip^{\mu,\pf},\mathcal{V}(\rho^{\pf}))$ is naturally a subspace of $V^{L_0(\FF_q)}$.
\begin{corollary}\label{cor-perf}
Let $(V,\rho)\in \Rep(P)$. We have
\begin{equation}\label{eq-main-perf}
H^0(\GZip^{\mu,\pf},\mathcal{V}(\rho^{\pf}))=V^{L_{0}(\FF_q)}\cap 
 \bigcap_{\alpha \in \Delta^P} 
 \bigoplus_{[\nu] \in X^*(T)/\Lambda_{\Xi_{\alpha},\mathbf{r}_{\alpha}}} 
 \fil_{\delta_{\alpha}}^{\Xi_{\alpha},\mathbf{a}_{\alpha},\mathbf{r}_{\alpha}} V_{[\nu]} .
\end{equation}
\end{corollary}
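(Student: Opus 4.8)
The plan is to repeat the proof of Theorem \ref{thm-main} with $G_k$, $E$, $U_\mu$, $X_\alpha$ and the morphisms $\psi_\alpha$ replaced throughout by their perfections, the only extra input being \eqref{equ-Umupf}. Recall $\GZip^{\mu,\pf}=[E^{\pf}\backslash G_k^{\pf}]$, and that by \eqref{equ-Umupf} the perfection of the $\mu$-ordinary locus is $\Ucal_\mu^{\pf}=[1/L_0(\FF_q)]$. Hence, exactly as in Corollary \ref{cor-global-muord}, restriction to $\Ucal_\mu^{\pf}$ is injective and identifies $H^0(\GZip^{\mu,\pf},\mathcal{V}(\rho^{\pf}))$ with a subspace of $V^{L_0(\FF_q)}$, an element $f$ of this subspace corresponding to the unique $E^{\pf}$-equivariant regular map $\tilde f\colon U_\mu^{\pf}\to V$ with $\tilde f(1)=f$ (note $1\in U_\mu^{\pf}$). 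It then remains to characterize those $f\in V^{L_0(\FF_q)}$ for which $\tilde f$ extends to a regular map $G_k^{\pf}\to V$.

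Next I would establish the perfect analogue of Corollary \ref{corphial}. Since the perfection map $X^{\pf}\to X$ is a universal homeomorphism, the decomposition \eqref{GminusU} and the open subsets $X_\alpha$ transfer at once: $G_k^{\pf}\setminus U_\mu^{\pf}=\bigcup_{\alpha\in\Delta^P}Z_\alpha^{\pf}$, the set $X_\alpha^{\pf}\setminus U_\mu^{\pf}$ is the irreducible locally closed subset $(E\cdot s_\alpha)^{\pf}$, and the complement of $\bigcup_{\alpha\in\Delta^P}X_\alpha^{\pf}$ has codimension $\geq 2$ in $G_k^{\pf}$; likewise the three assertions of Proposition \ref{psiadapted} hold for the perfection $\psi_\alpha^{\pf}\colon E^{\pf}\times\AA^{1,\pf}\to G_k^{\pf}$ of the morphism \eqref{phia}, being statements on points. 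The adapted-morphism lemmas (Lemma \ref{exadap}, Lemma \ref{adappb}) and the normality argument closing Corollary \ref{corphial} carry over once one notes that the perfection of a normal integral variety is again an integral scheme integrally closed in its function field; alternatively, one may use that $\Ocal_{G_k^{\pf}}$ is the colimit along Frobenius of $\Ocal_{G_k}$ and that $V$ is of finite type to reduce the whole assertion to a finite Frobenius stage, where Corollary \ref{corphial} applies directly. Either way, $\tilde f$ extends to $G_k^{\pf}$ if and only if, for every $\alpha\in\Delta^P$, the composite $\tilde f\circ\psi_\alpha^{\pf}\colon E^{\pf}\times\GG_{\mathrm{m}}^{\pf}\to V$ extends to $E^{\pf}\times\AA^{1,\pf}$.

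With this in hand, the computation in the proof of Theorem \ref{thm-main} applies verbatim: the factorization $\psi_\alpha((x,y),t)=x_1x_2^{-1}$ with $(x_1,x_2)\in E$ from \eqref{eq:phiadec}--\eqref{eq:inE} is unchanged, so $(\tilde f\circ\psi_\alpha)((x,y),t)=\rho(x)\,\rho\!\left(\phi_\alpha\!\left(\begin{pmatrix}1&0\\-t^{-1}&1\end{pmatrix}\right)\delta_\alpha(t)u_{t,\alpha}\right)f$, and one is reduced to deciding when $F_\alpha(t)$ lies in $\Ocal(\AA^{1,\pf})\otimes V$. Expanding $f$ by its weight decomposition gives the same expression for the $V_\chi$-component $F_{\alpha,\chi}(t)$ as a sum of monomials in $t$ with rational exponents indexed by $\ZZ^{m_\alpha}/(\ZZ^{m_\alpha})_{\mathbf{r}_\alpha}$; crucially, ``$F_{\alpha,\chi}(t)$ has no term of negative $t$-exponent'' is literally the same condition in $k[t^{1/p^\infty}]=\Ocal(\AA^{1,\pf})$ as in the original argument (the exponents occurring are the same rational numbers), hence it is equivalent to the same vanishing relations among the $f_{\chi-\mathbf{j}\cdot\Xi_\alpha,\Xi_\alpha}^{\mathbf{j}}$, i.e. to $f\in\bigoplus_{[\nu]}\fil_{\delta_\alpha}^{\Xi_\alpha,\mathbf{a}_\alpha,\mathbf{r}_\alpha}V_{[\nu]}$. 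Intersecting over $\alpha\in\Delta^P$ and with $V^{L_0(\FF_q)}$ yields \eqref{eq-main-perf}.

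The only genuinely new point, and the one I expect to demand the most care, is the extension step for perfect schemes: one must either verify that perfections of normal varieties remain normal and that perfection preserves codimension and irreducibility of the relevant closed subschemes so that Lemmas \ref{exadap} and \ref{adappb} transfer, or make precise the reduction of the equivariant extension problem to a finite Frobenius stage, which requires tracking the Frobenius twists relating $\rho^{\pf}$ to a representation of some $P^{(q^{-n})}$. Everything else is a formal transcription of the proof of Theorem \ref{thm-main}, so no further computation is needed.
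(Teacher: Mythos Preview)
Your approach is correct but genuinely different from the paper's. You redo the proof of Theorem \ref{thm-main} in the perfect setting, carrying the adapted-morphism machinery and the $F_\alpha(t)$ computation over to $G_k^{\pf}$, $E^{\pf}$, $\AA^{1,\pf}$; the key observation is that the extension criterion ``no negative $t$-exponents'' is literally unchanged when the target ring is $k[t^{1/p^\infty}]$ rather than $k[t]$. The paper instead avoids perfect schemes almost entirely: it observes that since $V$ is of finite type, any $E^{\pf}$-equivariant map $G_k^{\pf}\to V$ factors through some finite level of the tower $(\cdots\xrightarrow{\varphi^d}G_k\xrightarrow{\varphi^d}G_k)$, so that $H^0(\GZip^{\mu,\pf},\mathcal{V}(\rho^{\pf}))$ is the increasing union over $n\geq 1$ of the spaces $H^0(\GZip^{\mu},\mathcal{V}(\rho^{(q^{nd})}))$, where $d$ is chosen so that $\mu$ is defined over $\FF_{q^d}$. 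One then applies Theorem \ref{thm-main} directly to each $\rho^{(q^{nd})}$ and checks that the Brylinski--Kostant part of \eqref{eq-main} is unchanged under $\rho\rightsquigarrow\rho^{(q^{nd})}$ (the weight spaces are merely reindexed by $\nu\mapsto q^{nd}\nu$), while the invariants part $(V^{(q^{nd})})^{L_\varphi}$ increases with $n$ to $V^{L_0(\FF_q)}$ because Frobenius eventually kills the unipotent part $L_\varphi^\circ$ and fixes $L_0(\FF_q)$ pointwise. The paper's route is shorter and sidesteps the normality and codimension issues for perfect schemes that you flag as needing care; your route is more self-contained and makes transparent why the filtration condition is insensitive to perfection. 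Interestingly, the finite-level reduction you mention as an alternative for the extension step is essentially the paper's entire argument.
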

\begin{proof}
Let $d$ be the smallest positive integer such that $\mu$ is defined over $\FF_{q^d}$. 
We show that $H^0(\GZip^{\mu,\pf},\mathcal{V}(\rho^{\pf}))$ is given by the subspace of elements $f\in V$ such that there exists $n\geq 1$ with $f\in H^0(\GZip^{\mu},\mathcal{V}(\rho^{(q^{nd})}))$. Indeed, such a section is given by a map $f\colon G_k^{\pf}\to V$ satisfying an $E^{\pf}$-equivariance condition with respect to $\rho^{\pf}$. Since $V$ is a scheme of finite-type, such a map is given by a map $f_{n}\colon G_k\to V$ at a finite level of the system $(\cdots \xrightarrow{\varphi^d} G_k \xrightarrow{\varphi^d} G_k)$. We have
\begin{equation}
\fil_{\delta_{\alpha}}^{\Xi_{\alpha},\mathbf{a}_{\alpha},\mathbf{r}_{\alpha}} V^{(q^{nd})}_{[q^{nd}\nu]} = \fil_{\delta_{\alpha}}^{\Xi_{\alpha},\mathbf{a}_{\alpha},\mathbf{r}_{\alpha}} V_{[\nu]}.
\end{equation}
Hence, changing $\rho$ to $\rho^{(q^n)}$ only affects $V^{L_\varphi}$. The result follows.
\end{proof}

\subsection{$L$-semisimplification}
If $\rho \colon P\to \GL(V)$ is an arbitrary representation, we can attach a $P$-representation $(V,\rho^{L\textrm{-ss}})$ which is trivial on $R_{\mathrm{u}}(P)$. The representation $\rho^{L\textrm{-ss}}$ is defined as the composition
\begin{equation}
    \rho^{L\textrm{-ss}}  \colon  P \xrightarrow{\theta^P_L} L \xrightarrow{\rho} \GL(V)
\end{equation}
where $\theta^P_L \colon P\to L$ is the natural projection map whose kernel is $R_{\mathrm{u}}(P)$, as defined in \S\ref{subsec-zipdatum}. We call $\rho^{L\textrm{-ss}}$ the $L$-semisimplification of $\rho$. We sometimes write $V^{L\textrm{-ss}}$ to denote this representation (even though the underlying vector space is the same as $V$).

One obvious property of $V^{L\textrm{-ss}}$ is $(V^{L\textrm{-ss}})^{L_\varphi}=V^{L_\varphi}$ since $L_\varphi\subset L$ by Lemma \ref{lemLphi}\eqref{lemLphi-item1}. In particular, by Corollary \ref{cor-global-muord}, we have for all $(V,\rho)\in \Rep(P)$, the equality
\begin{equation}\label{identULss}
    H^0(\Ucal_\mu,\mathcal{V}(\rho^{L\textrm{-ss}})) = H^0(\Ucal_\mu,\mathcal{V}(\rho)).
\end{equation}
Note that this identification is somewhat indirect: it is not induced by a morphism between the sheaves $\mathcal{V}(\rho)$ and $\mathcal{V}(\rho^{L\textrm{-ss}})$. For $f\in H^0(\Ucal_\mu,\mathcal{V}(\rho))$, we will write $f^{L\textrm{-ss}}$ for its image under the identification \eqref{identULss}, and call it the $L$-semisimplification of $f$. As an element of $V$, $f^{L\textrm{-ss}}$ is the same as $f$, but we want to emphasize the fact that the representation has changed.

We now give another interpretation of $L$-semisimplification when $P$ is defined over $\FF_q$. Write again $U_\mu \subset G_k$ for the unique open $E$-orbit, and recall that $1\in U_{\mu}$ (see \S\ref{subsec-ordloc}).
\begin{lemma}
Assume that $P$ is defined over $\FF_q$. There exists a unique regular map $\Theta \colon U_\mu\to L$ such that for any $(a,b)\in E$, one has
\begin{equation}\label{Theta-equ}
    \Theta(ab^{-1})=\theta^P_L(a) \theta^Q_L(b)^{-1}.
\end{equation}
Furthermore, we have $L\subset U_\mu$ and 
the inclusion $L\subset U_\mu$ is a section of $\Theta$. 
\end{lemma}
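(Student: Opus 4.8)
The idea is to construct $\Theta$ directly on the $E$-orbit $U_\mu$ of $1$ and then check that $L$ sits inside $U_\mu$ as a section. First I would observe that, since $P$ is defined over $\FF_q$, Remark \ref{rmkmuFp} gives $L=M$ and $Q$ opposite to $P$ with common Levi $L$, so $\varphi\colon L\to L$ is the $q$-power Frobenius and the zip group is $E=\{(x,y)\in P\times Q\mid \varphi(\theta^P_L(x))=\theta^Q_L(y)\}$. The stabilizer of $1$ is $L_\varphi=L(\FF_q)$ by Lemma \ref{lemLphi}\eqref{lemLphi-item3}. The action map $E\to U_\mu$, $(a,b)\mapsto ab^{-1}$, is faithfully flat and identifies $U_\mu\cong E/L_\varphi$ (Proposition \ref{prop-Umu} and its proof). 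So to define $\Theta\colon U_\mu\to L$ it suffices to check that the morphism $E\to L$, $(a,b)\mapsto \theta^P_L(a)\theta^Q_L(b)^{-1}$, is invariant under right translation by $L_\varphi=L(\FF_q)$: for $c\in L(\FF_q)$ we have $(a,b)\cdot(c,c)=(ac,bc)$, hence $\theta^P_L(ac)\theta^Q_L(bc)^{-1}=\theta^P_L(a)c\,(\theta^Q_L(b)c)^{-1}=\theta^P_L(a)\theta^Q_L(b)^{-1}$, using that $\theta^P_L,\theta^Q_L$ are group homomorphisms fixing $L$ pointwise and $c\in L$. By faithfully flat descent along $E\to U_\mu$ this produces the desired regular map $\Theta$, and uniqueness is immediate since $E\to U_\mu$ is an epimorphism of schemes and \eqref{Theta-equ} pins down $\Theta$ on points.

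Next I would prove $L\subset U_\mu$. Since $P$ is opposite to $Q$ with common Levi $L$, for $\ell\in L$ we can write $\ell = a\,b^{-1}$ with $a\in P$, $b\in Q$ and $\varphi(\theta^P_L(a))=\theta^Q_L(b)$ precisely when the pair $(a,b)$ lies in $E$; the natural candidate is $a=\ell\cdot\varphi(\ell')$-type decompositions. Concretely: the map $L\to G_k$, $\ell\mapsto \ell\varphi(\ell)^{-1}$ is the Lang map, but here I instead want to exhibit $\ell$ itself (not $\ell\varphi(\ell)^{-1}$) in the $E$-orbit of $1$. Observe $(\ell,\ell^{\varphi^{-1}}\text{-adjusted})$ won't literally work, so more carefully: take $a\in P$, $b\in Q$ with $a b^{-1}=\ell$. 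Writing $a=\theta^P_L(a)u$, $u\in R_{\mathrm u}(P)$, and similarly for $b$, and using the Bruhat-type uniqueness of the $R_{\mathrm u}(P)\times R_{\mathrm u}(Q)$-factorization of $\ell\in L$ (namely $u=1$ forced since $\ell\in L$ and $R_{\mathrm u}(P)\cap R_{\mathrm u}(Q)L = \{1\}\cdot$stuff), one reduces to pairs $(a,b)\in L\times L$ with $ab^{-1}=\ell$. Then $(a,b)\in E$ iff $\varphi(a)=\varphi(b)$, i.e. $a=b$; but then $ab^{-1}=1\ne\ell$ in general. The correct statement is rather that the orbit map restricted to the subgroup-pair $\{(\ell,\varphi^{-1}\text{...})\}$... — this is exactly where care is needed. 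The honest route: the $E$-orbit of $1$ contains $ab^{-1}$ for all $(a,b)\in E$; take $a\in L$ arbitrary and $b=\varphi(a)\in L$ (which lies in $Q$ since $L\subset Q$), so $(a,\varphi(a))\in E$ and $a\varphi(a)^{-1}\in U_\mu$. Thus $U_\mu\supset \{\,\ell\varphi(\ell)^{-1}:\ell\in L\,\}$, the image of the Lang isogeny $L\to L$; since $\varphi\colon L\to L$ has Frobenius kernel and the Lang map $L\to L$ is surjective (Lang's theorem for the connected group $L$ over $\FF_q$), we get that every element of $L$ is of the form $\ell\varphi(\ell)^{-1}$, hence $L\subset U_\mu$.

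Finally, to see $L\hookrightarrow U_\mu$ is a section of $\Theta$: for $\ell\in L$ write $\ell=m\varphi(m)^{-1}$ with $m\in L$ as above, so the pair $(m,\varphi(m))\in E$ represents $\ell$, and then $\Theta(\ell)=\theta^P_L(m)\theta^Q_L(\varphi(m))^{-1}=m\varphi(m)^{-1}=\ell$, since $\theta^P_L,\theta^Q_L$ restrict to the identity on $L$. Hence $\Theta|_L=\mathrm{id}_L$, which is the section assertion.

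\textbf{Main obstacle.} The genuinely delicate point is the verification that $L\subset U_\mu$ — one must be careful not to confuse "$\ell$ lies in the $E$-orbit of $1$" with "$\ell$ is the image of the Lang map", and the clean argument goes through Lang's theorem applied to the connected reductive group $L$ over $\FF_q$ together with the explicit pairs $(m,\varphi(m))\in E$. Once that is in place, the descent construction of $\Theta$ and the section identity are formal consequences of $\theta^P_L,\theta^Q_L$ being homomorphisms that are the identity on $L$ together with faithfully flat descent along the orbit map $E\twoheadrightarrow U_\mu$.
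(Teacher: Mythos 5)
Your proposal is correct and follows essentially the same route as the paper: descend $(a,b)\mapsto\theta^P_L(a)\theta^Q_L(b)^{-1}$ along $E\to E/L_\varphi\simeq U_\mu$ using $L_\varphi=L(\FF_q)$, and obtain $L\subset U_\mu$ together with $\Theta|_L=\mathrm{id}$ by writing $\ell=m\varphi(m)^{-1}$ via Lang's theorem and using the pair $(m,\varphi(m))\in E$. The exploratory dead end in your middle paragraph is harmless since you discard it and land on the correct argument.
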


\begin{proof}
First, note that since $P$ is defined over $\FF_q$, one has $L=M$, hence the formula \eqref{Theta-equ} makes sense. The unicity of $\Theta$ is obvious. 
For the existence, consider the map $\tilde{\Theta} \colon E\to L;\ (a,b)\mapsto \theta^P_L(a) \theta^Q_L(b)^{-1}$. Since $P$ is defined over $\FF_q$, one has $L_\varphi=L(\FF_q)$ (Lemma \ref{lemLphi}\ref{lemLphi-item3}). For all $(a,b)\in E$ and all $x\in L(\FF_q)$, one has $\tilde{\Theta}(ax,bx)=\tilde{\Theta}(a,b)$. Hence $\tilde{\Theta}$ factors to a map $\Theta \colon E/L(\FF_q)\simeq U_\mu\to L$. This proves the first result. Now, if $x\in L$, we can write $x=a\varphi(a)^{-1}$ with $a\in L$ by Lang's theorem. 
Hence $x \in U_\mu$ and $\Theta(x)=a\varphi(a)^{-1}=x$, so the second statement is proved.
\end{proof}

\begin{example}
Consider the case $G=\Sp(2n)_{\FF_q}$ for $n\geq 1$. 
We write an element of $G_k$ as 
\[
 \begin{pmatrix} A&B\\C&D  \end{pmatrix} 
\]
with $A,B,C,D$ square matrices of size $n\times n$. Let $P\subset G_k$ be the parabolic subgroup defined by the condition $B=0$ and $Q\subset G_k$ the parabolic subgroup defined by the condition $C=0$. We put $L = P\cap Q$. This gives a zip datum $(G,P,L,Q,L,\varphi)$. The Zariski open subset $U_\mu\subset G_k$ is the set of matrices in $G_k$ for which $A$ is invertible. The map $\Theta \colon U_\mu\to L$ is given by
\begin{equation}
    \Theta \colon \left(\begin{matrix} A&B\\C&D  \end{matrix} \right) \mapsto \left(\begin{matrix} A&0\\0&D-CA^{-1}B  \end{matrix} \right).
\end{equation}
\end{example}

\begin{proposition}
Assume that $P$ is defined over $\FF_q$. Let $(V,\rho)\in \Rep(P)$ and let $f\in V^{L(\FF_q)}$. Let $\tilde{f}$ be the corresponding function $U_\mu\to V$ defined in \eqref{ftilde}. Then the function $\widetilde{f^{L\textrm{-ss}}} \colon U_\mu\to V$ that corresponds to the $L$-semisimplification $f^{L\textrm{-ss}}$ is the composition
\begin{equation}
    U_{\mu} \stackrel{\Theta}{\longrightarrow} L 
    \hookrightarrow U_\mu \stackrel{\tilde{f}}{\longrightarrow} V. 
\end{equation}
\end{proposition}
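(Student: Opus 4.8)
The plan is to exploit that $U_\mu$ is a single $E$-orbit through $1$, so that a regular map out of $U_\mu$ with a prescribed $E$-equivariance is pinned down by its value at $1$. Concretely, combining \eqref{globquot} with Corollary \ref{cor-global-muord}, the function $\widetilde{f^{L\textrm{-ss}}}$ is the unique regular map $h\colon U_\mu\to V$ with $h(1)=f$ and $h(axb^{-1})=\rho^{L\textrm{-ss}}(a)\,h(x)=\rho(\theta^P_L(a))\,h(x)$ for all $(a,b)\in E$ and $x\in U_\mu$. So it suffices to check that the composition $g\colonequals \tilde f\circ\bigl(L\hookrightarrow U_\mu\bigr)\circ\Theta$ has these two properties. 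Regularity of $g$ is clear from the previous lemma (which gives $\Theta$ regular) and regularity of $\tilde f$. Since $(1,1)\in E$ forces $\Theta(1)=\theta^P_L(1)\theta^Q_L(1)^{-1}=1$, we get $g(1)=\tilde f(1)=f$.

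The key computational input is the transformation rule
\[
\Theta(axb^{-1})=\theta^P_L(a)\,\Theta(x)\,\theta^Q_L(b)^{-1}
\qquad\text{for }(a,b)\in E,\ x\in U_\mu .
\]
I would prove this by writing $x=cd^{-1}$ with $(c,d)\in E$, observing $(ac,bd)\in E$, and using that $\theta^P_L$ (resp.\ $\theta^Q_L$) is a group homomorphism on $P$ (resp.\ $Q$) together with the defining formula \eqref{Theta-equ}. The second ingredient is that the pair $(\theta^P_L(a),\theta^Q_L(b))$ again lies in $E$: because $P$ is defined over $\FF_q$ we have $M=L$, hence $\theta^Q_M=\theta^Q_L$, and both $\theta^P_L,\theta^Q_L$ restrict to the identity on $L$; therefore the defining condition $\varphi(\theta^P_L(\theta^P_L(a)))=\theta^Q_M(\theta^Q_L(b))$ collapses to $\varphi(\theta^P_L(a))=\theta^Q_L(b)$, which holds since $(a,b)\in E$.

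Putting these together, for $(a,b)\in E$ and $x\in U_\mu$, using that $\Theta(x)\in L\subset U_\mu$ (so $\tilde f$ is defined there) and the $E$-equivariance of $\tilde f$ applied to $(\theta^P_L(a),\theta^Q_L(b))\in E$, we obtain $g(axb^{-1})=\tilde f\bigl(\theta^P_L(a)\,\Theta(x)\,\theta^Q_L(b)^{-1}\bigr)=\rho(\theta^P_L(a))\,\tilde f(\Theta(x))=\rho(\theta^P_L(a))\,g(x)$. Thus $g$ satisfies both characterizing properties of $\widetilde{f^{L\textrm{-ss}}}$, and by the uniqueness recorded above $g=\widetilde{f^{L\textrm{-ss}}}$, which is exactly the asserted factorization $U_\mu\xrightarrow{\Theta}L\hookrightarrow U_\mu\xrightarrow{\tilde f}V$.

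I do not expect a serious obstacle here: the only points that genuinely require care are the verification that $(\theta^P_L(a),\theta^Q_L(b))\in E$ — which is where the hypothesis that $P$ is defined over $\FF_q$ (equivalently $L=M$) is used in an essential way — and keeping track that $\Theta$ takes values in $L$, viewed inside $U_\mu$ via the section of the previous lemma, so that the composition with $\tilde f$ is legitimate.
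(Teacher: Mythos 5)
Your argument is correct and follows essentially the same route as the paper: both rest on the defining formula \eqref{Theta-equ} for $\Theta$ and on the observation that $(\theta^P_L(a),\theta^Q_L(b))\in E$ when $L=M$, the paper simply evaluating $\tilde f\circ\Theta$ directly at $g=ab^{-1}$ rather than first recording the transformation rule for $\Theta$ and invoking uniqueness. The extra layer you add (the rule $\Theta(axb^{-1})=\theta^P_L(a)\Theta(x)\theta^Q_L(b)^{-1}$ for general $x$) is harmless but unnecessary, since one may always take $x=1$.
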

\begin{proof}
Put $f' = \tilde{f}\circ \Theta$. For $(a,b)\in E$ and $g\in U_\mu$ such that $g=ab^{-1}$, we have
\begin{equation}
f'(g)=f'(ab^{-1})=\tilde{f}(\Theta(ab^{-1}))=\tilde{f}(\theta^P_L(a)\theta^Q_L(b)^{-1})=\rho(\theta^P_L(a))f=\rho^{L\textrm{-ss}}(a)f=\widetilde{f^{L\textrm{-ss}}}(g).    
\end{equation}
Hence $f'=\widetilde{f^{L\textrm{-ss}}}$.
\end{proof}

Let $f\in H^0(\GZip^\mu,\mathcal{V}(\rho))$ be a global section. We may view its restriction $f|_{\Ucal_\mu}$ as a section of $\mathcal{V}(\rho^{L\textrm{-ss}})$ over $\Ucal_\mu$ by the identification \eqref{identULss}. It is thus natural to ask if $(f|_{\Ucal_\mu})^{L \textrm{-ss}}$ extends to a global section over $\GZip^\mu$. We prove that this holds when $P$ is defined over $\FF_q$ in the following proposition.

\begin{proposition} \label{Lssreg}
Assume that $P$ is defined over $\FF_q$. The identification \eqref{identULss} extends to a commutative diagram
\begin{equation}
\xymatrix@1@M=7pt{
H^0(\GZip^{\mu},\mathcal{V}(\rho)) \ar@{^{(}->}[r]  \ar@{^{(}->}[d] & H^0(\GZip^\mu,\mathcal{V}(\rho^{L\textrm{-ss}}))  \ar@{^{(}->}[d] \\
H^0(\Ucal_\mu,\mathcal{V}(\rho)) \ar[r]^-{=} &H^0(\Ucal_\mu,\mathcal{V}(\rho^{L\textrm{-ss}})). }
\end{equation}
\end{proposition}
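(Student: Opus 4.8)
The plan is to reduce the statement to a single inclusion of subspaces of the ambient space $V^{L_{\varphi}}$, which by Lemma~\ref{lemLphi}\eqref{lemLphi-item3} equals $V^{L(\FF_q)}$ since $P$ is defined over $\FF_q$. Recall that by Corollary~\ref{cor-global-muord} the two vertical arrows are the natural restriction maps $H^0(\GZip^{\mu},-)\hookrightarrow H^0(\Ucal_\mu,-)$, that $H^0(\Ucal_\mu,\mathcal{V}(\rho))=H^0(\Ucal_\mu,\mathcal{V}(\rho^{L\textrm{-ss}}))=V^{L(\FF_q)}$, and that under these identifications the lower horizontal arrow \eqref{identULss} and the operation $f\mapsto f^{L\textrm{-ss}}$ are literally the identity on underlying vectors. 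Consequently all four spaces in the square sit inside $V^{L(\FF_q)}$ and all four maps are ``identity on vectors'', so the only thing to check is the inclusion
\[
H^0(\GZip^{\mu},\mathcal{V}(\rho))\ \subseteq\ H^0(\GZip^{\mu},\mathcal{V}(\rho^{L\textrm{-ss}}))\qquad\text{inside } V^{L(\FF_q)}.
\]
Once this is known, the top arrow is simply the inclusion of these two subspaces, the square commutes tautologically, and the required injectivity is automatic.

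To establish the inclusion I would feed everything through Corollaries~\ref{cor-main-Fq} and \ref{cor-Fq-Levi}. By Corollary~\ref{cor-main-Fq} one has $H^0(\GZip^{\mu},\mathcal{V}(\rho))=V^{L(\FF_q)}\cap\bigoplus_{\nu}\fil^{P}_{{\wp^*}^{-1}(\nu)}V_{\nu}$, and by the computation in its proof $\fil^{P}_{{\wp^*}^{-1}(\nu)}V_{\nu}=\bigcap_{\alpha\in\Delta^P}\fil^{-\alpha}_{\langle\nu,\delta_{\alpha}\rangle}V_{\nu}$, cf.\ \eqref{equ-FilPchi}. The key elementary point is that $\fil^{-\alpha}_{c}V_{\nu}=0$ whenever $c<0$: by \eqref{equ-Filc} this space is the intersection of the kernels of $E^{(j)}_{-\alpha}$ over all $j>c$, and for $c<0$ the index $j=0$ is included, with $E^{(0)}_{-\alpha}=\mathrm{id}_{V_{\nu}}$. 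Hence $\fil^{P}_{{\wp^*}^{-1}(\nu)}V_{\nu}=0$ unless $\langle\nu,\delta_{\alpha}\rangle\ge 0$ for every $\alpha\in\Delta^P$, i.e.\ unless $V_{\nu}\subseteq V^{\Delta^P}_{\ge 0}$ in the notation of \eqref{equ-VDeltaP}; summing over $\nu$ gives $\bigoplus_{\nu}\fil^{P}_{{\wp^*}^{-1}(\nu)}V_{\nu}\subseteq V^{\Delta^P}_{\ge 0}$, so that $H^0(\GZip^{\mu},\mathcal{V}(\rho))\subseteq V^{L(\FF_q)}\cap V^{\Delta^P}_{\ge 0}$. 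Finally, $\rho^{L\textrm{-ss}}$ is trivial on $R_{\mathrm{u}}(P)$, has the same $T$-weight decomposition as $\rho$, and satisfies $(V^{L\textrm{-ss}})^{L(\FF_q)}=V^{L(\FF_q)}$, so Corollary~\ref{cor-Fq-Levi} applied to $\rho^{L\textrm{-ss}}$ identifies $H^0(\GZip^{\mu},\mathcal{V}(\rho^{L\textrm{-ss}}))$ with precisely $V^{L(\FF_q)}\cap V^{\Delta^P}_{\ge 0}$, which yields the desired inclusion. (The inclusion is in general strict, since $\fil^{P}_{\chi}V_{\nu}$ may be a proper subspace of $V_{\nu}$ even when $\langle\chi,\alpha^{\vee}\rangle\ge 0$ for all $\alpha\in\Delta^P$; this reflects the fact that $L$-semisimplification genuinely enlarges the space of sections.)

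I do not expect a real obstacle here: the argument is bookkeeping on top of Corollaries~\ref{cor-main-Fq} and \ref{cor-Fq-Levi}, the one conceptual ingredient being the vanishing of the Brylinski--Kostant filtration in negative degrees. The only genuine care needed is at the level of identifications — checking that ``restrict $f$ to $\Ucal_\mu$, then reinterpret it via \eqref{identULss} as a section of $\mathcal{V}(\rho^{L\textrm{-ss}})$'' really lands in the subspace cut out by Corollary~\ref{cor-Fq-Levi}, which is exactly what the weight-space estimate above provides. As an independent consistency check one can argue directly with the adapted morphisms of \S\ref{sec-adap}: by the preceding proposition $\widetilde{f^{L\textrm{-ss}}}=\tilde f\circ\Theta$ (notation of \eqref{ftilde}), and evaluating on $\psi_\alpha$ as in \eqref{phia} — using $P/\FF_q$, so that $L=M$, $m_{\alpha}=1$, and both $\theta^{P}_{L}$ and $\theta^{Q}_{L}$ kill the root groups occurring there — gives $(\widetilde{f^{L\textrm{-ss}}}\circ\psi_\alpha)((x,y),t)=\rho(\theta^{P}_{L}(x))\,\rho(\delta_{\alpha}(t))\,f$; extendability across $t=0$ via Corollary~\ref{corphial} then forces $f_{\nu}=0$ whenever $\langle\nu,\delta_{\alpha}\rangle<0$, recovering the same conclusion.
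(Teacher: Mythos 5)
Your proof is correct and follows essentially the same route as the paper's: both reduce the commutativity of the square to the inclusion $V^{L(\FF_q)}\cap\bigoplus_{\nu}\fil^{P}_{{\wp^*}^{-1}(\nu)}V_{\nu}\subseteq V^{L(\FF_q)}\cap V^{\Delta^P}_{\ge 0}$ via Corollaries \ref{cor-main-Fq} and \ref{cor-Fq-Levi}, with the key observation that $\fil^{-\alpha}_{c}V_{\nu}=0$ for $c<0$ because $E^{(0)}_{-\alpha}$ is the identity. The consistency check via $\Theta$ and the adapted morphisms is a pleasant extra but not needed.
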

\begin{proof}
Let $f\in H^0(\GZip^{\mu},\mathcal{V}(\rho))$. Since $P$ is defined over $\FF_q$, we can apply Corollary \ref{cor-main-Fq} to the representation $(V,\rho)$. Furthermore, since $R_{\mathrm{u}}(P)$ acts trivially on $(V^{L\textrm{-ss}},\rho^{L\textrm{-ss}})$, we can apply Corollary \ref{cor-Fq-Levi} to $(V^{L\textrm{-ss}},\rho^{L\textrm{-ss}})$. Therefore, it suffices to show that
for each $\nu\in X^*(T)$, 
\begin{equation}
    V^{L(\FF_q)}\cap 
 \bigoplus_{\nu \in X^*(T)} 
 \fil_{{\wp^*}^{-1}(\nu)}^P 
 V_{\nu} \quad  
 \subset 
 \quad 
 V^{L(\FF_q)}\cap 
V_{\geq 0}^{\Delta^P}.
\end{equation}
By \eqref{equ-VDeltaP}, it suffices to show the following: for any fixed $\nu\in X^*(T)$, if $\fil_{{\wp^*}^{-1}(\nu)}^P 
 V_{\nu}\neq 0$, then $\langle {\wp^*}^ {-1}(\nu),\alpha^\vee \rangle \geq 0$ 
for all $\alpha\in \Delta^P$. More generally, using \eqref{equ-FilPchi}, it suffices to show that for any $\alpha\in \Delta^P$ and any integer $c\in \ZZ$ such that $\fil_{c}^{-\alpha} V_{\nu}\neq 0$, one has $c\geq 0$. This is trivial by \eqref{equ-Filc} because 
$E^{(0)}_{-\alpha}$ is the identity map. 
\end{proof}

\begin{rmk}
Proposition \ref{Lssreg} does not hold in general without the assumption that $P$ is defined over $\FF_q$ 
as an example in \S \ref{counter-Lss} shows. 
\end{rmk}

\section{The case of $G=\SL_{2,\FF_q}$} \label{sec-SL2-proof}

\subsection{Notation for $\SL_2$}\label{NotaSL2-sec}
Let $B_2$ and $B_2^+$ be 
the lower-triangular and upper-triangular
Borel subgroup of $\SL_{2,k}$. 
Let $T_2$ be the diagonal torus of $\SL_{2,k}$. 
We put 
\[
 u_2 = \begin{pmatrix} 1 & 0 \\ 1 & 1 \end{pmatrix} \in B_2(k) . 
\]
For $r \in \ZZ$, 
let $\chi_r$ be the character of $B_2$ defined by 
\[
  \begin{pmatrix} x & 0 \\ z & x^{-1} \end{pmatrix}  \mapsto x^r. 
\] 
Let $\Std \colon \SL_{2,k}\to \GL_{2,k}$ be the standard representation. 
Restrictions of $\chi_r$ and $\Std$ to subgroups 
are denoted by the same notations. 

\subsection{Zip datum}\label{SL2zip-sec}
Let $G=\SL_{2,\FF_q}$ and $\mu \colon \GG_{\mathrm{m},k} \to G_k;\ x\mapsto \diag(x,x^{-1})$. 
Let $\Zcal_{\mu}=(G,P,L,Q,M,\varphi)$ be the associated zip datum. 
We have 
$P=B_2$, $Q=B_2^+$ and 
$L=M=T_2$. 
We take $(B,T)=(B_2,T_2)$ as a Borel pair and 
take a frame as in Lemma \ref{lem-framemu}. 
Denote by $\alpha$ the unique element of $\Delta$. In our convention of positivity, $\alpha=\chi_{2}$. Note that $I=\emptyset$ and $\Delta^P=\{\alpha\}$. Identify $X^*(T)=\ZZ$ such that $r\in \ZZ$ corresponds to the character $\chi_r$. 
The zip group $E$ is equal to 
\begin{equation}\label{xy}
    \left\{ \left( \begin{pmatrix} a&0\\c&a^{-1} \end{pmatrix},  \begin{pmatrix} a^q&b\\0&a^{-q} \end{pmatrix}  \right)\in B_2 \times B_2^+ \right\}.
\end{equation}
The unique open $E$-orbit $U_\mu \subset G_k$  
is given by 
\begin{equation}
    U_\mu=\left\{ \left(\begin{matrix}
    x & y \\ z & w 
    \end{matrix}
    \right)\in \SL_{2,k} \relmiddle| x \neq 0 \right\}.
\end{equation}

\subsection{The space $H^0(\GZip^\mu,\mathcal{V}(\rho))$}
Let $\rho \colon B\to \GL(V)$ be a representation. We write the weight decomposition as $V=\bigoplus_{i\in \ZZ}V_i$ where $T$ acts on $V_i$ by the character $\chi_i$ for all $i\in \ZZ$. We have 
\[
 H^0(\Ucal_\mu,\mathcal{V}(\rho))=V^{L(\FF_q)}
 =\bigoplus_{i\in (q-1)\ZZ} V_i 
\] 
by Corollary \ref{cor-global-muord}. 
Since in this case the parabolic $P=B$ is defined over $\FF_q$, we can apply Corollary \ref{cor-main-Fq} to compute the space of global section  $H^0(\GZip^\mu,\mathcal{V}(\rho))$. Also, since $T$ is split over $\FF_q$, the map $\wp^*$ is given by $\nu \mapsto -(q-1)\nu$, hence ${\wp^*}^{-1}(\nu)=\frac{-\nu}{q-1}$. 
We obtain 
\[
H^0(\GZip^\mu,\mathcal{V}(\rho)) =V^{L(\FF_q)}\cap 
 \bigoplus_{i\in \ZZ} 
 \fil_{\frac{-\chi_i}{q-1}}^P 
 V_i\label{equ-H0-SL2-line1} 
 = \bigoplus_{i\in -(q-1)\NN} 
 \fil_{\frac{-i}{q-1}}^{-\alpha}
 V_{i}, 
\]
where we used that $\fil_{\frac{-i}{q-1}}^{-\alpha}V_i=0$ for $i>0$. In particular, $H^0(\GZip^\mu,\mathcal{V}(\rho))$ is stable by $T$ and is entirely determined by its weight spaces $\fil_{\frac{-i}{q-1}}^{-\alpha} V_{i} \subset V_i$ for $i\in -(q-1)\NN$. 
Let $(V,\rho)\in \Rep(B)$ and set $n = \dim(V)$. Set $V_{\leq i}=\bigoplus_{j\leq i}V_j$ and $V_{\geq i}=\bigoplus_{j\geq i}V_j$. Then using Lemma \ref{lem:phialpha}, we have a $B$-stable filtration
\begin{equation}
  \cdots \subset  V_{\leq {i-1}}\subset V_{\leq i} \subset V_{\leq i+1}\subset \cdots . 
\end{equation}
For all $i\in -(q-1)\NN$, we have 
\begin{equation}\label{eqf1}
H^0(\GZip^\mu,\mathcal{V}(\rho))_i = \left\{ f \in V_i \relmiddle| \rho ( u_2 ) f \in V_{\geq \frac{(q+1)i}{q-1}} 
\right\}
\end{equation}
by the definition of $\fil_{\frac{-i}{q-1}}^{-\alpha}
 V_{i}$. 

\begin{lemma}
Let $(V,\rho)\in \Rep(B)$ and $m\in \ZZ$ be the smallest weight of $\rho$. Then one has an inclusion
\begin{equation}\label{eqinclu}
  \bigoplus_{\substack{i\in -(q-1)\NN, \\ (q+1)i\leq (q-1)m}} V_i \subset H^0(\GZip^{\mu},\mathcal{V}(\rho)).
\end{equation}
\end{lemma}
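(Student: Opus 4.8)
The plan is to read the statement off directly from the weight-space description \eqref{eqf1}. Recall that for $i\in -(q-1)\NN$ one has
\[
H^0(\GZip^\mu,\mathcal{V}(\rho))_i = \left\{ f\in V_i \relmiddle| \rho(u_2)f\in V_{\geq \frac{(q+1)i}{q-1}} \right\},
\]
while $H^0(\GZip^\mu,\mathcal{V}(\rho))_i=0$ for $i\notin -(q-1)\NN$, and that $H^0(\GZip^\mu,\mathcal{V}(\rho))$ is the direct sum of these weight spaces (as noted after \eqref{equ-H0-SL2-line1}). So it suffices to show that for each $i\in -(q-1)\NN$ with $(q+1)i\leq (q-1)m$ one has $V_i\subseteq H^0(\GZip^\mu,\mathcal{V}(\rho))_i$; equivalently, that $\rho(u_2)f\in V_{\geq \frac{(q+1)i}{q-1}}$ for every $f\in V_i$.

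The key observation is that this membership becomes automatic once $i$ is small enough, for a reason that does not depend on $f$. Since $m$ is the smallest weight of $\rho$, the whole representation satisfies $V=V_{\geq m}$, hence $\rho(u_2)f\in V_{\geq m}$ for any $f\in V$. If $(q+1)i\leq (q-1)m$, equivalently $\frac{(q+1)i}{q-1}\leq m$, then $V_{\geq m}\subseteq V_{\geq \frac{(q+1)i}{q-1}}$, so indeed $\rho(u_2)f\in V_{\geq \frac{(q+1)i}{q-1}}$ for every $f\in V_i$. Taking the direct sum over all such $i$ yields the asserted inclusion.

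There is essentially no obstacle here: the lemma is a formal consequence of \eqref{eqf1} together with the fact that $m$ bounds all weights of $\rho$ from below. The only point requiring a little care is the bookkeeping with the $\SL_2$-conventions of \S\ref{NotaSL2-sec}--\S\ref{SL2zip-sec}, namely that $\alpha=\chi_2$, that $u_2=u_{-\alpha}(1)$, and hence that $\rho(u_2)f=\sum_{j\geq 0}E_{-\alpha}^{(j)}(f)$ with $E_{-\alpha}^{(j)}(f)\in V_{i-2j}$; but all of this has already been absorbed into \eqref{eqf1}, so no further computation is needed.
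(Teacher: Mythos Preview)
Your proof is correct and is essentially the same as the paper's: both observe that the condition $(q+1)i\leq (q-1)m$ makes $V_{\geq \frac{(q+1)i}{q-1}}=V$, so the membership condition in \eqref{eqf1} is vacuously satisfied. The paper states this in one line, while you spell out the intermediate step $V=V_{\geq m}\subseteq V_{\geq \frac{(q+1)i}{q-1}}$, but the content is identical.
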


\begin{proof}
Let $f\in V_i$ with $i\in -(q-1)\NN$ and $(q+1)i\leq (q-1)m$. Then we have $V_{\geq \frac{(q+1)i}{q-1}}=V$, 
so we have $f\in H^0(\GZip^\mu,\mathcal{V}(\rho))_i$.
\end{proof}

The following example shows that  $H^0(\GZip^\mu,\mathcal{V}(\rho))$ is not a sum of weight spaces of $V$ in general. 

\begin{example}\label{example-notsum}
For $i\in \{1,-1\}$, let $e_i$ be a nonzero vector of weight $i$ of $\Std$. Consider $\rho:=\Std\otimes \Std$ with basis $e_i\otimes e_j$ for $i,j\in \{1,-1\}$. 
The weights of $\rho$ are $\{2,0,-2\}$, and $\dim(V_2)=\dim(V_{-2})=1$, $\dim(V_0)=2$. Then we have
\[
 H^0(\GZip^\mu,\mathcal{V}(\rho))_0 = \Span(e_1\otimes e_{-1} - e_{-1}\otimes e_1).
\]

\end{example}

\subsection{Property (P)}\label{subsec-PropP}

\begin{proposition}\label{propPequiv}
    Let $\rho  \colon  B\to \GL(V)$ be an algebraic representation. Let $m_1, \ldots, m_n$ be the weights of $V$ ordered so that $m_1 > m_2 > \cdots > m_n$. 
    The following properties are equivalent.
\begin{equivlist}
\item \label{Pitem1} The subspace $V^{R_{\mathrm{u}}(B)}$ is one-dimensional (and hence is equal to $V_{m_n}$).
\item \label{Pitemint} The intersection of all nonzero $B$-subrepresentations in $V$ is nonzero.
\item \label{Pitempnz} For all $1\leq i \leq n$, we have $\dim(V_{m_i})= 1$ and for any $v\in V_{m_i}\setminus \{0\}$, the projection of $\rho (u_2)v$ onto $V_{m_n}$ is nonzero.
\end{equivlist}
\end{proposition}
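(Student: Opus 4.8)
The plan is to prove the cycle of implications $\ref{Pitem1}\Rightarrow\ref{Pitemint}\Rightarrow\ref{Pitempnz}\Rightarrow\ref{Pitem1}$. Throughout I will use that $B=B_2=T_2\ltimes R_{\mathrm{u}}(B)$ with $R_{\mathrm{u}}(B)=U_{-\alpha}=\{u_{-\alpha}(x)\mid x\in k\}$, generated by $u_2=u_{-\alpha}(1)$, and that by Lemma~\ref{lem:phialpha} and the definition of the operators $E^{(j)}_{-\alpha}$ one has $\rho(u_{-\alpha}(x))v=\sum_{j\geq 0}x^j E^{(j)}_{-\alpha}(v)$ with $E^{(0)}_{-\alpha}=\id$ and $E^{(j)}_{-\alpha}(V_\nu)\subseteq V_{\nu-j\alpha}$; under $X^*(T)=\ZZ$ this says $E^{(j)}_{-\alpha}$ lowers weights by $2j$. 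An elementary observation used repeatedly: $V_{m_n}\subseteq V^{R_{\mathrm{u}}(B)}$ always, since for $v\in V_{m_n}$ every $E^{(j)}_{-\alpha}(v)$ with $j\geq 1$ lands in a weight space below the minimal weight $m_n$ and hence vanishes; in particular $\dim V^{R_{\mathrm{u}}(B)}\geq 1$. For $\ref{Pitem1}\Rightarrow\ref{Pitemint}$: if $W'\subseteq V$ is any nonzero $B$-subrepresentation then, $R_{\mathrm{u}}(B)$ being a unipotent group, it has a nonzero fixed vector in the nonzero space $W'$, so $W'\cap V^{R_{\mathrm{u}}(B)}\neq 0$; as $V^{R_{\mathrm{u}}(B)}$ is one-dimensional this forces $V^{R_{\mathrm{u}}(B)}\subseteq W'$. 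Hence the nonzero space $V^{R_{\mathrm{u}}(B)}$ lies in every nonzero $B$-subrepresentation, giving $\ref{Pitemint}$.

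For $\ref{Pitemint}\Rightarrow\ref{Pitempnz}$, let $W\neq 0$ be the intersection of all nonzero $B$-subrepresentations. Since $V_{m_n}$ is a $B$-subrepresentation (it is $T$-stable, and $R_{\mathrm{u}}(B)$ acts trivially on it by the observation above), $W\subseteq V_{m_n}$; and since every line in $V_{m_n}$ is a $B$-subrepresentation, $W$ lies in each of them, forcing $\dim V_{m_n}=1$ and $W=V_{m_n}$. Now fix $i$ and $0\neq v\in V_{m_i}$. The key point is the description of the $B$-subrepresentation generated by $v$: it equals $\Span_k\{E^{(j)}_{-\alpha}(v)\mid j\geq 0\}$. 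This span is $T$-stable (each $E^{(j)}_{-\alpha}(v)$ is a weight vector) and $R_{\mathrm{u}}(B)$-stable by the divided-power relations $E^{(a)}_{-\alpha}E^{(b)}_{-\alpha}=\binom{a+b}{a}E^{(a+b)}_{-\alpha}$ (obtained by expanding $\rho(u_{-\alpha}(x+y))=\rho(u_{-\alpha}(x))\rho(u_{-\alpha}(y))$); conversely, since $k$ is infinite, Lagrange interpolation shows each $E^{(j)}_{-\alpha}(v)$ is a $k$-linear combination of the vectors $\rho(u_{-\alpha}(x))v$, $x\in k$. This subrepresentation contains $W=V_{m_n}$, so its weight-$m_n$ part, namely $\Span_k\{E^{(j)}_{-\alpha}(v)\mid m_i-2j=m_n\}$, is nonzero; hence there is a (necessarily unique) $j_0\geq 0$ with $m_i-2j_0=m_n$ and $E^{(j_0)}_{-\alpha}(v)\neq 0$. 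Then $\pr_{m_n}(\rho(u_2)v)=E^{(j_0)}_{-\alpha}(v)\neq 0$, which is the second clause of $\ref{Pitempnz}$; and since this holds for every $0\neq v\in V_{m_i}$, the map $E^{(j_0)}_{-\alpha}\colon V_{m_i}\to V_{m_n}$ is injective, so $\dim V_{m_i}\leq\dim V_{m_n}=1$. This gives $\ref{Pitempnz}$.

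For $\ref{Pitempnz}\Rightarrow\ref{Pitem1}$, fix for each $i$ a basis vector $v_i$ of the line $V_{m_i}$. Nonvanishing of $\pr_{m_n}(\rho(u_2)v_i)$ forces $m_i-m_n\in 2\ZZ_{\geq 0}$, so we may set $j_i=(m_i-m_n)/2$; the $j_i$ are pairwise distinct with $j_n=0$, and $\pr_{m_n}(\rho(u_{-\alpha}(x))v_i)=x^{j_i}E^{(j_i)}_{-\alpha}(v_i)=\lambda_i x^{j_i}v_n$ for some $\lambda_i\in k^{\times}$ (with $\lambda_n=1$). If $v=\sum_i c_i v_i$ lies in $V^{R_{\mathrm{u}}(B)}$, then applying $\pr_{m_n}$ to $\rho(u_{-\alpha}(x))v=v$ yields $\sum_i c_i\lambda_i x^{j_i}=c_n$ for all $x\in k$; since $k$ is infinite and the exponents $j_i$ are distinct with $j_n=0$, comparing coefficients gives $c_i\lambda_i=0$, hence $c_i=0$, for every $i\neq n$. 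Therefore $V^{R_{\mathrm{u}}(B)}=V_{m_n}$ is one-dimensional, which is $\ref{Pitem1}$ (together with its identification with $V_{m_n}$).

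The only step carrying genuine content is the identification of the cyclic $B$-submodule generated by a weight vector in the implication $\ref{Pitemint}\Rightarrow\ref{Pitempnz}$ — that is, keeping track of the divided-power structure of the $R_{\mathrm{u}}(B)$-action together with the weight grading — but for representations of the Borel subgroup of $\SL_2$ this is routine; the remaining arguments are formal bookkeeping with weight gradings and the infiniteness of $k$.
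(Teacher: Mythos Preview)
Your proof is correct and follows the same cycle of implications as the paper, with the same key idea in $\ref{Pitemint}\Rightarrow\ref{Pitempnz}$ (the cyclic $B$-submodule generated by a weight vector must meet $V_{m_n}$). The only notable difference is in $\ref{Pitempnz}\Rightarrow\ref{Pitem1}$: the paper observes more directly that $V^{R_{\mathrm{u}}(B)}$ is $T$-stable, hence a sum of the one-dimensional weight spaces $V_{m_i}$, so if $\dim V^{R_{\mathrm{u}}(B)}\geq 2$ it contains some $V_{m_i}$ with $i\neq n$, whence $\rho(u_2)v_i=v_i$ has zero projection onto $V_{m_n}$ --- your polynomial-identity argument reaches the same conclusion but is slightly more elaborate than necessary.
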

\begin{proof}
We show (i) $\Rightarrow$ (ii). If $W\subset V$ is a nonzero $B$-subrepresentation, then $W^{R_{\mathrm{u}}(B)}\subset V^{R_{\mathrm{u}}(B)}$. Since $W^{R_{\mathrm{u}}(B)}\neq 0$, we have $W^{R_{\mathrm{u}}(B)}=V^{R_{\mathrm{u}}(B)}$ 
hence $V^{R_{\mathrm{u}}(B)}\subset W$. 

We show (ii) $\Rightarrow$ (iii). 
We show that for any nonzero $v\in V_{m_i}$ the projection of 
$\rho (u_2) v$ onto $V_{m_n}$ is nonzero. 
For a contradiction, assume it is zero. 
Since $B=R_{\mathrm{u}}(B)T$, 
the $B$-subrepresentation generated by $v$ 
is generated by $v$ as an $R_{\mathrm{u}}(B)$-representation. 
Hence this representation does not have a non-trivial 
intersection with $V_{m_n}$ by Lemma \ref{lem:phialpha}. 
This contradicts (ii). Hence the claim follows. 
We note that $\dim V_{m_n} =1$ by (ii). 
Assume that $\dim V_{m_i} \geq 2$ for some $i$. 
Then there is a nonzero $v \in V_{m_i}$ such that the projection to $V_{m_n}$ of  
$\rho (u_2 )v$ is zero. 
This is a contradiction. 

We show (iii) $\Rightarrow$ (i). 
Assume $\dim V^{R_{\mathrm{u}}(B)} \geq 2$. 
Then $V^{R_{\mathrm{u}}(B)}$ contains $V_{m_i}$ for some $i \neq n$. 
For any nonzero $v \in V_{m_i} \subset V^{R_{\mathrm{u}}(B)}$, 
the projection of $\rho (u_2)v$ onto $V_{m_n}$ is zero. 
This is a contradiction. 
\end{proof}

We say that 
$(V,\rho)\in \Rep(B)$ satisfies the property (P) 
if the equivalent conditions of 
Proposition \ref{propPequiv} are satisfied. 

\begin{example}\label{egIndP}
For $\lambda\in X^*_{+}(T)$, the restriction to $B$ of $\Ind_B^{G_k}(\lambda)$ satisfies the property (P) 
by the last sentence of \S\ref{subsec-remind}. 
\end{example}

\begin{proposition}\label{propequalityP}
Assume that $(V,\rho)\in \Rep(B)$ satisfies the property (P). Then the inclusion \eqref{eqinclu} is an equality, i.e. \[H^0(\GZip^{\mu},\mathcal{V}(\rho))= \bigoplus_{\substack{i\in -(q-1)\NN, \\ (q+1)i\leq (q-1)m}} V_i.\]
\end{proposition}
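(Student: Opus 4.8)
The plan is to deduce the reverse inclusion from property (P), since the forward inclusion ``$\supseteq$'' is exactly the already-established inclusion \eqref{eqinclu}. First I would recall that $H^0(\GZip^\mu,\mathcal{V}(\rho))$ is $T$-stable, with weight-space decomposition $\bigoplus_{i\in -(q-1)\NN} H^0(\GZip^\mu,\mathcal{V}(\rho))_i$, and that each summand is described explicitly by \eqref{eqf1}. Thus the statement reduces to showing that $H^0(\GZip^\mu,\mathcal{V}(\rho))_i = 0$ for every $i\in -(q-1)\NN$ with $(q+1)i > (q-1)m$, where $m$ denotes the smallest weight of $\rho$; indeed, for the remaining $i\in -(q-1)\NN$ (those with $(q+1)i\le (q-1)m$) we have $V_i\subseteq H^0(\GZip^\mu,\mathcal{V}(\rho))$ by \eqref{eqinclu} and $H^0(\GZip^\mu,\mathcal{V}(\rho))_i\subseteq V_i$, so equality holds there.

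For a fixed such $i$ I would argue as follows. If $V_i=0$ there is nothing to prove, so assume $i$ is one of the weights $m_1>\cdots>m_n$ of $V$. Then property (P), in the form of condition (iii) of Proposition \ref{propPequiv}, gives that $\dim V_i=1$ and that, for every nonzero $f\in V_i$, the projection of $\rho(u_2)f$ onto the lowest weight space $V_{m_n}=V_m$ is nonzero. The hypothesis $(q+1)i>(q-1)m$ rewrites as $m<\tfrac{(q+1)i}{q-1}$, so the weight $m$ lies strictly below the threshold and hence $\rho(u_2)f\notin V_{\geq (q+1)i/(q-1)}$. By the explicit description \eqref{eqf1}, this says $f\notin H^0(\GZip^\mu,\mathcal{V}(\rho))_i$; since $f$ ranges over all nonzero elements of $V_i$, we conclude $H^0(\GZip^\mu,\mathcal{V}(\rho))_i=0$. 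Combining the two inclusions will finish the proof.

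I do not expect a genuine obstacle here: the whole content is already packaged into the equivalence of Proposition \ref{propPequiv}, whose condition (iii) was formulated precisely to pin down the Brylinski--Kostant filtration $\fil^{-\alpha}_{-i/(q-1)}V_i$ appearing in the $\SL_2$-formula, forcing it to be all of $V_i$ exactly when the lowest weight of $V$ sits above the threshold $\tfrac{(q+1)i}{q-1}$, and to vanish otherwise. The only points needing a little care are the bookkeeping of the inequalities --- in particular recording that $\rho(u_2)=u_{-\alpha}(1)$ strictly lowers $T$-weights, so that the nonzero $V_m$-component produced by property (P) genuinely certifies that $\rho(u_2)f$ has left $V_{\geq (q+1)i/(q-1)}$ --- together with the degenerate cases $V_i=0$ and $V=0$, in which both sides of the asserted equality trivially coincide.
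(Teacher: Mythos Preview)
Your proposal is correct and follows essentially the same argument as the paper: use Proposition~\ref{propPequiv}(iii) to see that for nonzero $f\in V_i$ the element $\rho(u_2)f$ has nonzero projection onto $V_m$, and then invoke \eqref{eqf1} to force $m\geq \tfrac{(q+1)i}{q-1}$. The paper's proof is simply the terse contrapositive of what you wrote; your version makes the edge cases ($V_i=0$, $V=0$) and the bookkeeping of inequalities more explicit, but there is no substantive difference.
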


\begin{proof}
In this case, the element $\rho (u_2) f$ in the equation \eqref{eqf1} has a nonzero projection onto $V_m$ by Proposition \ref{propPequiv}(iii). Thus if $f\in H^0(\GZip^\mu,\mathcal{V}(\rho))_i$, then we must have $m\geq \frac{(q+1)i}{q-1}$. This shows that \eqref{eqinclu} is an equality.
\end{proof}

\section{Category of automorphic vector bundles on $\GZip^\mu$}

\subsection{The category $\VB_P(\GZip^\mu)$}
Recall the functor $\Rep(P)\to \VB_P(\GZip^\mu)$ (\S \ref{sec-VBGzip}). 
This functor is not fully faithful 
even after restricting to the full subcategory $\Rep(L)\subset \Rep(P)$ (see \S\ref{sec-Lreps}). 
Indeed, consider the following example.
 
\begin{example}
Assume that $P$ is defined over $\mathbb{F}_q$. 
Let $\mathbf{1}\in \Rep(L)$ be the trivial $L$-representation, and $(V,\rho) \in \Rep(L)$. 
Then $\Hom_{\Rep(L)}(\mathbf{1},V)=V^L$, whereas we have 
 \begin{equation}
     \Hom_{\VB(\GZip^\mu)}(\mathcal{V}(\mathbf{1}),\mathcal{V}(\rho)) = H^0(\GZip^\mu,\mathcal{V}(\rho))=V^{L(\FF_q)}\cap V_{\geq 0}^{\Delta^P} 
\end{equation}
by Corollary \ref{cor-Fq-Levi}. 
\end{example}
 
To overcome the problem, we introduce  $L_{\varphi}$-modules with additional structures. 

\begin{definition}\label{defNmod}
An $L_{\varphi}$-module 
with $\Delta^P$-monodromy is a pair  $((\tau,V),\Ncal)$ where $\tau \colon L_{\varphi} \to \GL_k(V)$ is a finite-dimensional representation of $L_{\varphi}$ with a decomposition 
$V=\bigoplus_{\nu \in X^*(T)} V_{\nu}$ as $k$-vector spaces and 
$\Ncal = \{N_{\alpha'}^{(j)}\}_{\alpha\in \Delta^P,\, \alpha' \in \Xi_{\alpha},\, j \in \ZZ}$ is a set of $k$-linear endmorphisms of $V$ 
such that 
$N_{\alpha'}^{(j)}(V_{\nu}) \subset V_{\nu+j\alpha'}$, 
$N_{\alpha'}^{(0)}=\id$ and $N_{\alpha'}^{(j)}=0$ for $j <0$. 

Morphisms are given as follows: Let $((\tau,V),\Ncal)$ and $((\tau',V'),\Ncal')$ be two $L_{\varphi}$-modules 
with $\Delta^P$-monodromy. Then a morphism $((\tau,V),\Ncal)\to ((\tau',V'),\Ncal')$ is a $k$-linear map $f \colon V\to V'$ which satisfies:
 \begin{enumerate}[(1)]
\item $f$ is an $L_{\varphi}$-equivariant morphism.
\item 
For $\alpha \in \Delta^P$, $[\mathbf{j}]\in \ZZ^{m_{\alpha}}/(\ZZ^{m_{\alpha}})_{r_{\alpha}}$ 
and $\chi \in X^*(T)$ such that $[\mathbf{j}]\cdot \mathbf{r}_{\alpha}>\delta_{\alpha}(\chi)$, we have 
\[
 \sum_{\mathbf{j} \in [\mathbf{j}]} \sum_{\mathbf{j}' \in \ZZ^{m_{\alpha}}} (-1)^{\sum_{i=1}^{m_{\alpha}}j'_i}
  \pr_{\chi} \left( N_{\alpha_1}'^{(j'_1)}  \cdots  N_{\alpha_{m_{\alpha}}}'^{(j'_{m_{\alpha}})}   f   N_{\alpha_{m_{\alpha}}}^{(j_{m_{\alpha}} -j'_{m_{\alpha}})}  \cdots N_{\alpha_1}^{(j_1 -j'_1)} \right) =0 , 
\]
where $\pr_{\chi}$ denotes the projection 
\[
 \pr_{\chi} \colon \Hom (V,V') \simeq 
 \bigoplus_{\nu,\nu' \in X^*(T)} \Hom (V_{\nu},V'_{\nu'}) 
 \to 
 \bigoplus_{\nu \in X^*(T)} \Hom (V_{\nu},V'_{\nu+\chi}). 
\] 
\end{enumerate}
We denote by $L_{\varphi}\mathchar`-\mathrm{MN}_{\Delta^P}$ the category of 
$L_{\varphi}$-modules with $\Delta^P$-monodromy. 
\end{definition}

\begin{rmk}\label{rmkNcond}
The condition (2) in Definition \ref{defNmod} 
means that $f$ is compatibile with $\Ncal$ and $\Ncal'$ in some sense. 
Assume that $P$ is deffined over $\FF_q$. 
Then the condition (2) in Definition \ref{defNmod} 
is simplified as follows: 
For $\alpha \in \Delta^P$, $\chi \in X^*(T)$ and 
$j\in \NN$ such that $j r_{\alpha,1}>\delta_{\alpha}(\chi)$, we have 
\[
 \pr_{\chi} \left( 
 \sum_{0 \leq j' \leq j} (-1)^{j'} 
  N_{-\alpha}'^{(j')}  f N_{-\alpha}^{(j -j')} \right) =0 . 
\] 
The morphism $N_{-\alpha}^{(j)}$ is an analogue of 
$N^j/j!$ for a monodromy operator $N$ in characteristic zero. In this sense 
\[
 f \mapsto  \sum_{0 \leq j' \leq j} (-1)^{j'} 
  N_{-\alpha}'^{(j')}  f N_{-\alpha}^{(j -j')} 
\]
is an analogue of $j$-th iterate of 
\[
 f \mapsto fN -N' f 
\]
divided by $j!$ 
for monodromy operators $N$ and $N'$ 
in characteristic zero. 
\end{rmk}

We have the functor  
 \begin{equation}
 F_{\mathrm{MN}} \colon \Rep(P)\to L_{\varphi}\mathchar`-\mathrm{MN}_{\Delta^P};\ 
 (V,\rho) \mapsto  \left( (V,\rho|_{L_{\varphi}}),\{E_{\alpha'}^{(j)}\}_{\alpha\in \Delta^P,\, \alpha' \in \Xi_{\alpha},\, j \in \ZZ} \right) 
\end{equation}
where we equip $V$ with the natural $T$-weight decomposition $V= \bigoplus_{\nu}V_\nu$. 

\begin{definition}\label{defadmNmod}
An $L_{\varphi}$-module with $\Delta^P$-monodromy 
is called admissible if it is in the essential image of $F_{\mathrm{MN}}$. 
We denote by $L_{\varphi}\mathchar`-\mathrm{MN}_{\Delta^P}^{\mathrm{adm}}$ the category of admissible $L_{\varphi}$-modules with $\Delta^P$-monodromy.
\end{definition}

\begin{theorem}\label{equivNmod}
The functor $\mathcal{V}\colon \Rep(P)\to \VB(\GZip^\mu)$ factors through the functor $F_{\mathrm{MN}} \colon \Rep(P)\to L_{\varphi}\mathchar`-\mathrm{MN}_{\Delta^P}^{\mathrm{adm}}$ and induces an equivalence of categories
\begin{equation}
 L_{\varphi}\mathchar`-\mathrm{MN}_{\Delta^P}^{\mathrm{adm}} \longrightarrow \VB_P(\GZip^\mu).
\end{equation}
\end{theorem}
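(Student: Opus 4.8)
The plan is to show that the functor $\mathcal V \colon \Rep(P)\to \VB(\GZip^\mu)$ factors through $F_{\mathrm{MN}}$ and then verify that the induced functor $L_{\varphi}\text{-}\mathrm{MN}_{\Delta^P}^{\mathrm{adm}}\to \VB_P(\GZip^\mu)$ is essentially surjective, fully faithful, and hence an equivalence. First I would establish the factorization at the level of objects: for $(V,\rho)\in\Rep(P)$, the restriction of $\mathcal V(\rho)$ to $\Ucal_\mu\cong B(L_\varphi)$ (Proposition~\ref{prop-Umu}) is the vector bundle attached to $(V,\rho|_{L_\varphi})$ by Corollary~\ref{cor-global-muord}, and since $\Ucal_\mu$ is open dense in $\GZip^\mu$, the restriction functor $\VB_P(\GZip^\mu)\to \VB(\Ucal_\mu)$ is faithful. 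So to check that $\mathcal V$ factors through $F_{\mathrm{MN}}$ it suffices to see that $F_{\mathrm{MN}}(\rho)$ and $F_{\mathrm{MN}}(\rho')$ isomorphic implies $\mathcal V(\rho)\cong\mathcal V(\rho')$; but the $\Delta^P$-monodromy data $\{E_{\alpha'}^{(j)}\}$ together with the $L_\varphi$-action recovers exactly the recipe that, via Corollary~\ref{corphial} and the computation in the proof of Theorem~\ref{thm-main}, determines which $L_\varphi$-equivariant sections on $\Ucal_\mu$ extend over the codimension-one strata $X_\alpha$. More precisely, I want to observe that a morphism $\mathcal V(\rho)\to\mathcal V(\rho')$ of vector bundles on $\GZip^\mu$ is the same as an element of $H^0(\GZip^\mu,\mathcal V(\rho^\vee\otimes\rho'))$, and applying Theorem~\ref{thm-main} to $V^\vee\otimes V'$ gives precisely the intersection of $(V^\vee\otimes V')^{L_\varphi}=\Hom_{L_\varphi}(V,V')$ with the filtration condition that unwinds to condition~(2) of Definition~\ref{defNmod}. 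This is the computational heart of the argument: one expands $\fil_{\delta_\alpha}^{\Xi_\alpha,\mathbf a_\alpha,\mathbf r_\alpha}(V^\vee\otimes V')_{[\chi]}$ and uses that $E_{\alpha'}^{(j)}$ acts on a tensor product by the Leibniz-type formula $E_{\alpha'}^{(j)}(\lambda\otimes v)=\sum_{j'+j''=j}E_{\alpha'}^{(j')}\lambda\otimes E_{\alpha'}^{(j'')}v$ (which follows from $u_{\alpha'}(x)$ acting diagonally), to rewrite the kernel condition on $V^\vee\otimes V'$ exactly as the vanishing of the alternating sums in Definition~\ref{defNmod}(2).

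Granting this, I would then prove fully faithfulness: for $(V,\rho),(V',\rho')\in\Rep(P)$,
\[
\Hom_{L_\varphi\text{-}\mathrm{MN}}\bigl(F_{\mathrm{MN}}(\rho),F_{\mathrm{MN}}(\rho')\bigr)
= \Hom_{L_\varphi}(V,V')\cap\bigcap_{\alpha\in\Delta^P}\bigoplus_{[\chi]}\fil_{\delta_\alpha}^{\Xi_\alpha,\mathbf a_\alpha,\mathbf r_\alpha}(V^\vee\otimes V')_{[\chi]}
= H^0(\GZip^\mu,\mathcal V(\rho^\vee\otimes\rho'))
= \Hom_{\VB(\GZip^\mu)}(\mathcal V(\rho),\mathcal V(\rho')),
\]
where the first equality is Definition~\ref{defNmod}(2) rewritten via the Leibniz formula as above, the second is Theorem~\ref{thm-main} applied to $V^\vee\otimes V'$ (noting $(V^\vee\otimes V')^{L_\varphi}=\Hom_{L_\varphi}(V,V')$), and the third is the standard identification $H^0(\Xcal,\Hscr om(\Escr,\Fscr))=\Hom(\Escr,\Fscr)$ on the stack. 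One must check these identifications are compatible with composition, which is routine since all three are realized as subspaces of $\Hom_k(V,V')$ and composition is matrix multiplication on that space. Since by construction $L_\varphi\text{-}\mathrm{MN}_{\Delta^P}^{\mathrm{adm}}$ is the essential image of $F_{\mathrm{MN}}$ and $\VB_P(\GZip^\mu)$ is the essential image of $\mathcal V$, essential surjectivity of the induced functor is automatic, and a fully faithful, essentially surjective functor is an equivalence.

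The main obstacle I anticipate is the bookkeeping in translating condition~(2) of Definition~\ref{defNmod} into the filtration condition of Theorem~\ref{thm-main} applied to $V^\vee\otimes V'$. One has to be careful that the $\Delta^P$-monodromy operators on $V^\vee$ are the "transpose-inverse" operators — i.e.\ $E_{\alpha'}^{(j)}$ on $V^\vee$ is characterized by $\sum_{j'}E_{\alpha'}^{(j')}{}^{\!*}\!\circ E_{\alpha'}^{(j-j')}=0$ for $j>0$ (the antipode relation for $u_{\alpha'}(x)^{-1}=u_{\alpha'}(-x)$) — so that the sign $(-1)^{\sum j_i'}$ and the reversed order of the operators in Definition~\ref{defNmod}(2) come out correctly, and that $\Xi_\alpha$, $\mathbf r_\alpha$, $\mathbf a_\alpha$, and the projection $\pr_\chi$ match on the two sides. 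Everything else — the factorization through $F_{\mathrm{MN}}$, essential surjectivity, compatibility with composition — is formal once the $\Hom$-space computation is in place. I would also remark that, since $L_\varphi$ and the monodromy data are all defined without reference to any smoothness or char-$\FF_q$ hypotheses on $P$, no such assumptions are needed here; the case $P$ defined over $\FF_q$ (Remark~\ref{rmkNcond}) is just the specialization $m_\alpha=1$, $L_\varphi=L(\FF_q)$, giving the cleaner single-index condition.
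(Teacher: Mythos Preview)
Your proposal is correct and follows essentially the same route as the paper: identify $\Hom_{\VB(\GZip^\mu)}(\mathcal V(\rho),\mathcal V(\rho'))$ with $H^0(\GZip^\mu,\mathcal V(\rho^\vee\otimes\rho'))$, apply Theorem~\ref{thm-main} to $V^\vee\otimes V'$, and match the resulting filtration condition with Definition~\ref{defNmod}(2) via the formula $u_{\alpha'}(x)\cdot f=\rho'(u_{\alpha'}(x))\circ f\circ\rho(u_{\alpha'}(-x))$. Your added detail on the Leibniz/antipode bookkeeping and compatibility with composition is exactly what the paper leaves implicit in the phrase ``we can see from the definition that this space coincides\ldots''.
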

\begin{proof}
For two $P$-representations $(V,\rho)$ and $(V',\rho')$, one has 
\begin{align}
    \Hom_{\VB(\GZip^\mu)}(\mathcal{V}(\rho),\mathcal{V}(\rho'))&=\Hom_{\VB(\GZip^\mu)}(\mathcal{V}(\mathbf{1}),\mathcal{V}(\rho)^\vee\otimes \mathcal{V}(\rho'))\\
    &=\Hom_{\VB(\GZip^\mu)}(\mathcal{V}(\mathbf{1}),\mathcal{V}(\rho^\vee\otimes \rho'))\\
    &=H^0(\GZip^\mu,\mathcal{V}(\rho^\vee\otimes \rho'))\\
    &=(V^\vee\otimes V')^{L_{\varphi}}\cap 
     \bigcap_{\alpha \in \Delta^P} 
 \bigoplus_{[\nu] \in X^*(T)/\Lambda_{\Xi_{\alpha},\mathbf{r}_{\alpha}}} 
 \fil_{\delta_{\alpha}}^{\Xi_{\alpha},\mathbf{a}_{\alpha},\mathbf{r}_{\alpha}} (V^\vee\otimes V')_{[\nu]} 
\end{align}
where we used Theorem \ref{thm-main} in the last line. We can see from the definition that this space coincides with the space of homomorphisms $F_{\mathrm{MN}}(V,\rho)\to F_{\mathrm{MN}}(V',\rho')$ using that 
the action of $u_{\alpha'}(x)$ on $V^\vee\otimes V'$ 
is given by 
$f \mapsto \rho'(u_{\alpha'}(x)) \circ f \circ \rho(u_{\alpha'}(-x))$ 
for $\alpha' \in \Xi_{\alpha}$. 
\end{proof}

Let $S_K$ denote the good reduction special fiber of a Hodge-type Shimura variety, with the same notations and assumptions as in \S \ref{subsec-Shimura}. Recall that there is a functor $\mathcal{V}\colon \Rep(P)\to \VB(S_K)$ (see \eqref{equ-Shimura-functor}), which induces functors
\begin{equation}
\xymatrix@1{
\Rep(P) \ar[r]^-{\mathcal{V}} & \VB_P(\GZip^\mu) \ar[r]^-{\zeta^*} & \VB_P(S_K)
}
\end{equation}
where $\VB_P(S_K)$ also denotes the essential image of $\Rep(P)$ in $\VB(S_K)$.
We obtain the following corollary in the context of Shimura varieties. 

\begin{corollary}\label{cor-factorize-Shimura}
The functor $\mathcal{V} \colon \Rep(P)\to \VB_P(S_K)$ factors as 
\begin{equation}
\xymatrix@1@M=5pt{
\Rep(P) \ar[r]^-{F_{\mathrm{MN}}} &  L_{\varphi}\mathchar`-\mathrm{MN}_{\Delta^P}^{\mathrm{adm}} \ar[r]^-{\zeta^*} & \VB_P(S_K).
}
\end{equation}
\end{corollary}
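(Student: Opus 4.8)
The plan is to obtain the corollary by simply stacking the two factorizations that have already been established, so that the proof is essentially formal. Denote by $\mathcal{V}_{\GZip}\colon \Rep(P)\to \VB(\GZip^\mu)$ the functor attached to the $P$-torsor $\mathcal{I}_P$ underlying the universal $G$-zip over $S_K$, i.e. the first arrow in \eqref{equ-Shimura-functor}. The construction of the automorphic vector bundles on $S_K$ recalled in \S\ref{subsec-Shimura} gives, via Zhang's map $\zeta\colon S_K\to\GZip^\mu$ of \eqref{zeta-Shimura}, the identity of functors
\[
\mathcal{V}\ =\ \zeta^*\circ \mathcal{V}_{\GZip}\colon \Rep(P)\longrightarrow \VB(S_K),
\]
which is precisely the content of \eqref{equ-Shimura-functor}. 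Thus everything reduces to understanding $\mathcal{V}_{\GZip}$, and this is exactly Theorem \ref{equivNmod}: that functor factors as $\Rep(P)\xrightarrow{F_{\mathrm{MN}}}L_{\varphi}\mathchar`-\mathrm{MN}_{\Delta^P}^{\mathrm{adm}}\xrightarrow{\ \kappa\ }\VB_P(\GZip^\mu)$, where $\kappa$ denotes the equivalence of categories produced there.

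Next I would record the (routine) compatibility with the full subcategories $\VB_P(-)$. By definition, $\VB_P(S_K)$ is the essential image of $\mathcal{V}\colon\Rep(P)\to\VB(S_K)$; since $\mathcal{V}=\zeta^*\circ\mathcal{V}_{\GZip}$ and $\mathcal{V}_{\GZip}$ has essential image $\VB_P(\GZip^\mu)$, the functor $\zeta^*$ restricts to a functor $\VB_P(\GZip^\mu)\to\VB_P(S_K)$, and the essential image of this restriction is all of $\VB_P(S_K)$. Composing it with the equivalence $\kappa$ produces a functor
\[
L_{\varphi}\mathchar`-\mathrm{MN}_{\Delta^P}^{\mathrm{adm}}\xrightarrow{\ \kappa\ }\VB_P(\GZip^\mu)\xrightarrow{\ \zeta^*\ }\VB_P(S_K),
\]
which is the arrow labelled $\zeta^*$ in the statement of the corollary, once one identifies $L_{\varphi}\mathchar`-\mathrm{MN}_{\Delta^P}^{\mathrm{adm}}$ with $\VB_P(\GZip^\mu)$ along $\kappa$.

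It then remains to chain the two identities: combining the display above with $\mathcal{V}_{\GZip}=\kappa\circ F_{\mathrm{MN}}$ yields
\[
\mathcal{V}\ =\ \zeta^*\circ\mathcal{V}_{\GZip}\ =\ \zeta^*\circ\kappa\circ F_{\mathrm{MN}}\ =\ (\zeta^*\circ\kappa)\circ F_{\mathrm{MN}},
\]
which is exactly the asserted factorization $\Rep(P)\xrightarrow{F_{\mathrm{MN}}}L_{\varphi}\mathchar`-\mathrm{MN}_{\Delta^P}^{\mathrm{adm}}\xrightarrow{\zeta^*}\VB_P(S_K)$; compatibility on morphisms is automatic, since all the arrows in sight are functors and the relations are identities of functors. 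I do not expect any genuine obstacle: all the substance has already been absorbed into Theorem \ref{equivNmod} and into the construction of $\zeta$ in \S\ref{subsec-Shimura}, so the corollary is purely formal. The only point deserving a line of justification is the essential-image argument in the second paragraph, which guarantees that $\zeta^*$ really lands in $\VB_P(S_K)$ and not merely in some larger full subcategory of $\VB(S_K)$.
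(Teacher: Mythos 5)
Your proposal is correct and is essentially the paper's own (implicit) argument: the corollary is stated as an immediate consequence of the factorization $\mathcal{V}=\zeta^*\circ\mathcal{V}_{\GZip}$ from \eqref{equ-Shimura-functor} together with Theorem \ref{equivNmod}, exactly as you compose them. Your extra remark that $\zeta^*$ carries $\VB_P(\GZip^\mu)$ into $\VB_P(S_K)$ (by the essential-image definitions) is the only point needing a word, and you handle it correctly.
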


\subsection{The category $\VB_L(\GZip^\mu)$}

We assume that $P$ is defined over $\FF_q$. Hence, in what follows, we have $L_\varphi=L(\FF_q)$.

\begin{definition}\label{defVBL}
Let $\VB_L(\GZip^\mu)$ denote the full subcategory of $\VB(\GZip^\mu)$ which is equal to the essential image of the functor $\Rep(L)\to \VB(\GZip^\mu)$. We call it the category of $L$-vector bundles on $\GZip^\mu$.
\end{definition}

For example, the automorphic vector bundles $(\mathcal{V} (\lambda))_{\lambda \in X^*(T)}$ (see \S \ref{sec-Lreps}) lie in the subcategory of $L$-vector bundles on $\GZip^\mu$. 

\begin{definition}\label{deffilmod}
A $\Delta^P$-filtered $L_{\varphi}$-module is a pair $((\tau,V),\Fcal)$ where $\tau \colon L_{\varphi} \to \GL_k(V)$ is a finite-dimensional representation of $L_{\varphi}$ and $\Fcal = \{V_{\geq \bullet}^\alpha\}_{\alpha\in \Delta^P}$ is a set of filtrations on $V$.  
Here, $V_{\geq \bullet}^\alpha$ denotes a descending filtration $(V_{\geq r}^\alpha)_{r\in \RR}$.

Morphisms are given as follows. Let $((\tau,V),\Fcal)$ and $((\tau',V'),\Fcal')$ be two $\Delta^P$-filtered $L_{\varphi}$-modules. Then a morphism  $((\tau,V),\Fcal)\to ((\tau',V'),\Fcal')$ is a $k$-linear map $f \colon V\to V'$ which satisfies:
 \begin{enumerate}[(1)]
\item $f$ is an $L_{\varphi}$-equivariant morphism.
\item For each $\alpha \in \Delta^P$, 
the map $f$ is compatible with the filtrations $V_{\geq \bullet}^\alpha$ and $V'^\alpha_{\geq \bullet}$ in the sense that 
$f(V^{\alpha}_{\geq r}) \subset V'^{\alpha}_{\geq r}$ 
for any $r \in \RR$. 
\end{enumerate}
We denote by $L_{\varphi}\mathchar`-\mathrm{MF}_{\Delta^P}^{\mathrm{adm}}$ the category of $\Delta^P$-filtered $L_{\varphi}$-modules. 
\end{definition}

Let $((\tau,V),\Ncal) \in L_{\varphi}\mathchar`-\mathrm{MN}_{\Delta^P}$. 
For $\alpha\in \Delta^P$, define the $\alpha$-filtration $(V^{\alpha}_{\geq \bullet})$ of $V$ as follows: Let $V = \bigoplus_{\nu}V_\nu$ be the weight decomposition of $V$. For all $r\in \RR$, let $V^{\alpha}_{\geq r}$ be the direct sum of $V_\nu$ for all $\nu$ satisfying $\langle \nu,\delta_{\alpha} \rangle \geq r$. We call $V^{\alpha}_{\geq \bullet}$ the $\alpha$-filtration of $V$. 
Thus we have a functor 
$L_{\varphi}\mathchar`-\mathrm{MN}_{\Delta^P} \to L_{\varphi}\mathchar`-\mathrm{MF}_{\Delta^P}$. 
Taking composition, we obtain 
 \begin{equation}\label{equ-functor-modfil}
 F_{\mathrm{MF}} \colon \Rep(L) \to \Rep (P) \xrightarrow{F_{\mathrm{MN}}} 
 L_{\varphi}\mathchar`-\mathrm{MN}_{\Delta^P} \to 
 L_{\varphi}\mathchar`-\mathrm{MF}_{\Delta^P}. 
\end{equation}

\begin{definition}\label{defadmfilmod}
A $\Delta^P$-filtered $L_{\varphi}$-module is called admissible if it is in the essential image of $F_{\mathrm{MF}}$. 
We denote by $L_{\varphi}\mathchar`-\mathrm{MF}_{\Delta^P}^{\mathrm{adm}}$ the category of admissible $\Delta^P$-filtered $L_{\varphi}$-modules. 
\end{definition}

\begin{theorem}\label{fullfaith}
The functor $\mathcal{V}\colon \Rep(L)\to \VB(\GZip^\mu)$ factors through the functor $F_{\mathrm{MF}} \colon \Rep(L)\to L_{\varphi}\mathchar`-\mathrm{MF}_{\Delta^P}^{\mathrm{adm}}$ and induces an equivalence of categories
\begin{equation}
 L_{\varphi}\mathchar`-\mathrm{MF}_{\Delta^P}^{\mathrm{adm}} \longrightarrow \VB_L(\GZip^\mu).
\end{equation}
\end{theorem}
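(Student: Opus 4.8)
The plan is to reduce Theorem \ref{fullfaith} to Theorem \ref{equivNmod}, which already establishes an equivalence $L_{\varphi}\mathchar`-\mathrm{MN}_{\Delta^P}^{\mathrm{adm}} \xrightarrow{\sim} \VB_P(\GZip^\mu)$. Since $P$ is defined over $\FF_q$, we have $L_\varphi = L(\FF_q)$ and $m_\alpha = 1$ for all $\alpha \in \Delta^P$, so the monodromy data attached to a $P$-representation consists only of the operators $\{E_{-\alpha}^{(j)}\}_{\alpha \in \Delta^P,\, j\geq 0}$. First I would observe that the functor $\Rep(L)\to \Rep(P)$ followed by $F_{\mathrm{MN}}$ lands in those $L_\varphi$-modules with $\Delta^P$-monodromy for which all higher operators $N_{-\alpha}^{(j)}$ with $j\geq 1$ vanish (because $R_{\mathrm u}(P)$ acts trivially, so by Lemma \ref{lem:phialpha} the operators $E_{-\alpha}^{(j)}$ are zero for $j>0$). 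For such objects, the $\alpha$-filtration $V^\alpha_{\geq\bullet}$ recovers the object completely: $V^\alpha_{\geq r}=\bigoplus_{\langle\nu,\delta_\alpha\rangle\geq r}V_\nu$ and the weight decomposition (hence all the data) can be read off from the collection of filtrations together with the $L(\FF_q)$-action. Thus the functor $L_{\varphi}\mathchar`-\mathrm{MN}_{\Delta^P}\to L_{\varphi}\mathchar`-\mathrm{MF}_{\Delta^P}$ of \eqref{equ-functor-modfil}, restricted to objects with trivial higher monodromy, is fully faithful with essential image the admissible filtered modules; and on morphisms, condition (2) of Definition \ref{defNmod} with $m_\alpha=1$ and $N^{(j)}_{-\alpha}=0$ for $j\geq 1$ collapses (using Remark \ref{rmkNcond}) to exactly the statement $f(V^\alpha_{\geq r})\subset V'^\alpha_{\geq r}$, i.e. condition (2) of Definition \ref{defadmfilmod}.

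Concretely, the key steps are: (1) Show that $F_{\mathrm{MF}}$ factors $\mathcal{V}|_{\Rep(L)}$, i.e. the composite $\Rep(L)\to L_{\varphi}\mathchar`-\mathrm{MF}_{\Delta^P}^{\mathrm{adm}}\to \VB_L(\GZip^\mu)$ agrees with $\mathcal V$; this is immediate from the corresponding factorization in Theorem \ref{equivNmod} composed with the (faithful) forgetful functor $L_{\varphi}\mathchar`-\mathrm{MN}_{\Delta^P}\to L_{\varphi}\mathchar`-\mathrm{MF}_{\Delta^P}$. (2) Essential surjectivity: $\VB_L(\GZip^\mu)$ is by Definition \ref{defVBL} the essential image of $\Rep(L)$, and $L_{\varphi}\mathchar`-\mathrm{MF}_{\Delta^P}^{\mathrm{adm}}$ is by Definition \ref{defadmfilmod} the essential image of $F_{\mathrm{MF}}$, so surjectivity is automatic once we know the functor is well-defined. (3) Full faithfulness: for $(V,\rho),(V',\rho')\in\Rep(L)$ I compute, using Theorem \ref{equivNmod},
\[
\Hom_{\VB(\GZip^\mu)}(\mathcal V(\rho),\mathcal V(\rho'))=\Hom_{L_{\varphi}\mathchar`-\mathrm{MN}_{\Delta^P}}(F_{\mathrm{MN}}(\rho),F_{\mathrm{MN}}(\rho')),
\]
and then identify the right-hand side with $\Hom_{L_{\varphi}\mathchar`-\mathrm{MF}_{\Delta^P}}(F_{\mathrm{MF}}(\rho),F_{\mathrm{MF}}(\rho'))$. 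For this identification I use Corollary \ref{cor-Fq-Levi}: since $V^\vee\otimes V'$ is again trivial on $R_{\mathrm u}(P)$ (both factors are $L$-representations), the Brylinski--Kostant filtration degenerates, $\fil^{-\alpha}_c(V^\vee\otimes V')_\nu$ equals the whole weight space when $c\geq 0$ and is zero when $c<0$, so $H^0(\GZip^\mu,\mathcal V(\rho^\vee\otimes\rho'))=(V^\vee\otimes V')^{L(\FF_q)}\cap(V^\vee\otimes V')^{\Delta^P}_{\geq 0}$, and unwinding the weight condition $\langle\nu,\delta_\alpha\rangle\geq 0$ on $\Hom(V_\mu,V'_{\mu'})$-components yields precisely the filtration-compatibility condition of Definition \ref{defadmfilmod}(2). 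This is the routine but slightly bookkeeping-heavy heart of the argument.

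The main obstacle, such as it is, is verifying that the morphism condition (2) in Definition \ref{deffilmod} matches the specialization of Definition \ref{defNmod}(2) cleanly — one must check that knowing $f$ respects the filtrations $V^\alpha_{\geq\bullet}$ for all $\alpha$ is genuinely equivalent to $\pr_\chi\big(\sum_{0\leq j'\leq j}(-1)^{j'}N'^{(j')}_{-\alpha}\,f\,N^{(j-j')}_{-\alpha}\big)=0$ for all relevant $\chi,j$, when all $N^{(j)}_{-\alpha}=0$ for $j\geq 1$. In that degenerate case only the $j'=j$ (wait: only $N^{(0)}=\id$ survives, so the sum is $(-1)^0 f N^{(j)}_{-\alpha}$ plus $(-1)^{j}N'^{(j)}_{-\alpha}f$, both of which vanish unless $j=0$; for $j=0$ the condition reads $\pr_\chi(f)=0$ whenever $0>\delta_\alpha(\chi)=\langle\chi,\delta_\alpha\rangle$), giving exactly $f(V^\alpha_{\geq r})\subset V'^\alpha_{\geq r}$ — so the obstacle dissolves on inspection. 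I would also need to note, for well-definedness of the functor on objects, that the $\alpha$-filtration construction does produce an admissible object, which is true by construction since it is the image under $F_{\mathrm{MF}}$ of the given $L$-representation. Putting these together gives the equivalence.
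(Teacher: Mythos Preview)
Your proposal is correct and follows essentially the same approach as the paper: reduce to Theorem \ref{equivNmod} and then show that for $(V,\rho),(V',\rho')\in\Rep(L)$ one has $\Hom_{L_{\varphi}\mathchar`-\mathrm{MN}_{\Delta^P}}(F_{\mathrm{MN}}(\rho),F_{\mathrm{MN}}(\rho'))=\Hom_{L_{\varphi}\mathchar`-\mathrm{MF}_{\Delta^P}}(F_{\mathrm{MF}}(\rho),F_{\mathrm{MF}}(\rho'))$ using Remark \ref{rmkNcond} and the vanishing of $E_{-\alpha}^{(j)}$ for $j\geq 1$. The paper's proof is a two-line sketch pointing to exactly these ingredients, while you have spelled out the verification (and additionally recast it via Corollary \ref{cor-Fq-Levi}, which is redundant but not wrong).
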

\begin{proof}
By Theorem \ref{equivNmod}, it suffices to show 
\begin{align}
 \Hom_{L_{\varphi}\mathchar`-\mathrm{MN}_{\Delta^P}}( F_{\mathrm{MN}}(\rho),F_{\mathrm{MN}}(\rho') )=
 \Hom_{L_{\varphi}\mathchar`-\mathrm{MF}_{\Delta^P}}( F_{\mathrm{MF}}(\rho),F_{\mathrm{MF}}(\rho') ) 
\end{align}
for $(V,\rho), (V',\rho') \in \Rep(L)$. 
This follows from Remark \ref{rmkNcond} and the definitions of morphisms in $L_{\varphi}\mathchar`-\mathrm{MN}_{\Delta^P}$ and $L_{\varphi}\mathchar`-\mathrm{MF}_{\Delta^P}$. 
\end{proof}

\section{Examples}\label{sec-examples}

\subsection{The algebras $R_I$ and $R_{\Delta}$}

Fix a connected reductive group $G$ over $\FF_q$, a cocharacter $\mu \colon \GG_{\mathrm{m},k}\to G_k$, and a frame $(B,T,z)$ for $\Zcal_\mu$ (\S\ref{sec-frames}). 
For $\lambda\in X^*_{+}(T)$, denote by $V_{\Delta}(\lambda)$ the $G$-representation $\Ind_B^G(\lambda)$. 
We add a subscript $\Delta$ to avoid confusion with $V_I (\lambda)=\Ind_{B_L}^L(\lambda)$ for $\lambda\in X^*_{+,I}(T)$ (see \S\ref{sec-Lreps}). Let $\mathcal{V}_{\Delta}(\lambda)$ be the vector bundle on $\GZip^\mu$ attached to $V_{\Delta}(\lambda)$. 
We put 
\begin{equation}\label{RandRG}
    R_I = \bigoplus_{\lambda\in X^*_{+,I}(T)} H^0(\GZip^\mu,\mathcal{V}_I (\lambda))\quad \textrm{ and }\quad R_{\Delta} = \bigoplus_{\lambda\in X^*_+(T)} H^0(\GZip^\mu,\mathcal{V}_{\Delta}(\lambda)).
\end{equation}
By \eqref{Vlambdamap}, the $k$-vector spaces $R_I$ and $R_{\Delta}$ have a natural structure of $k$-algebra. They capture information about all $\mathcal{V}_I (\lambda)$ and $\mathcal{V}_{\Delta}(\lambda)$ at once. 

\begin{rmk}
In general, we do not know whether $R_I$ and $R_{\Delta}$ are finite-type algebras, but we conjecture it is the case. The algebra $R_I$ was studied in \cite{Koskivirta-automforms-GZip}. In the case of $G=\Sp(4)$ with a cocharacter $\mu$ whose centralizer Levi subgroup is isomorphic to $\GL_{2}$, we showed that $R_I$ is a polynomial algebra in three indeterminates (\cite[Theorem 5.4.1]{Koskivirta-automforms-GZip}).
\end{rmk}

In this first example, we examine $R_{\Delta}$ in the case of $G=\SL_{2,\FF_q}$ with the zip datum explained in \S \ref{SL2zip-sec}. In this case, the algebra $R_I$ is very simple, it is a polynomial algebra in one indeterminate, generated by the classical Hasse invariant. 
Let $n \in \NN$. 
The representation $V_{\Delta}(\chi_n)$ identifies with $\Sym^n(\Std)$. 
The weights of $V_{\Delta}(\chi_n)$ are $\{-n+2i \mid 0\leq i\leq n\}$. By Example \ref{egIndP} and Proposition \ref{propequalityP}, we have 
\begin{equation}\label{equH0}
 H^0(\GZip^\mu,\mathcal{V}_{\Delta}(\chi_n))= \bigoplus_{\substack{i\in -(q-1)\NN, \\ (q+1)i\leq -(q-1)n}} V_{\Delta}(\chi_n)_i
\end{equation}
for all $n\geq 0$. 
Let $x,y$ be indeterminates. Let $\SL_2$ act on $k[x,y]$ by 
\[
 \begin{pmatrix}
    a&b\\c&d
 \end{pmatrix} \cdot P=P(ax+cy,bx+dy). 
\] 
Then $V_{\Delta}(\chi_n)=\Sym^n(\Std)$ is the subrepresentation of $k[x,y]$ spanned by homogeneous polynomials in $x,y$ of degree $n$. The highest weight vector is $x^n$. By \eqref{equH0}, we have
\begin{equation}
H^0(\GZip^\mu,\mathcal{V}_{\Delta}(\chi_n))=\Span_k \left( x^j y^{n-j} \relmiddle| j\geq 0,\ q-1|n-2j, \ (q+1)j\leq n \right). 
\end{equation}
Similarly, $R_{\Delta}$ is the subalgebra of $k[x,y]$ generated by $x^j y^{n-j}$ for all $0\leq j \leq n$ with $q-1|n-2j$ and $(q+1)j\leq n$.

\begin{proposition} \label{prop-RG}
The algebra $R_{\Delta}$ is generated by $y^{q-1}$ and $xy^q$. In particular, it is a polynomial algebra in two indeterminates.
\end{proposition}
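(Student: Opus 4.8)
The plan is to reduce the statement to an elementary fact about a submonoid of $\ZZ_{\geq 0}^2$. Recall from the discussion preceding the proposition that $R_{\Delta}$ is the $k$-subalgebra of $k[x,y]$ spanned by the monomials $x^{j}y^{n-j}$ with $j\geq 0$, $n-j\geq 0$, $(q-1)\mid n-2j$ and $(q+1)j\leq n$. Writing $(a,b)=(j,n-j)$ for the exponent pair of such a monomial, these conditions become $a,b\geq 0$, $(q-1)\mid b-a$ and $b\geq qa$. Since the set
\[
 M=\bigl\{(a,b)\in\ZZ_{\geq 0}^2 \relmiddle| (q-1)\mid b-a,\ b\geq qa\bigr\}
\]
is closed under addition, $R_{\Delta}$ is the monoid algebra $k[M]$; the monomials $y^{q-1}$ and $xy^{q}$ correspond to $e_{1}=(0,q-1)$ and $e_{2}=(1,q)$, both clearly in $M$. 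It therefore suffices to prove that $M$ is freely generated as a monoid by $e_{1}$ and $e_{2}$.

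First I would check the easy inclusion: the general $\ZZ_{\geq 0}$-combination $m e_{1}+n e_{2}=(n,\ m(q-1)+nq)$ satisfies $b-a=(m+n)(q-1)$ and $b-qa=m(q-1)\geq 0$, so it lies in $M$. For the reverse inclusion, given $(a,b)\in M$ I would observe that writing $(a,b)=m e_{1}+n e_{2}$ forces $n=a$, hence $m(q-1)=b-qa$; the right-hand side is $\geq 0$ because $b\geq qa$, and is divisible by $q-1$ because $b-qa=(b-a)-a(q-1)$ with $(q-1)\mid b-a$. As $q-1\geq 1$ (recall $q$ is a prime power), this yields a unique $m\in\ZZ_{\geq 0}$ with $(a,b)=m e_{1}+n e_{2}$.

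This argument simultaneously establishes surjectivity and injectivity of the map $(m,n)\mapsto m e_{1}+n e_{2}$, so $e_{1},e_{2}$ form a free basis of $M$; equivalently, $y^{q-1}$ and $xy^{q}$ generate $R_{\Delta}$ and are algebraically independent over $k$, whence $R_{\Delta}=k[y^{q-1},xy^{q}]$ is a polynomial algebra in two indeterminates. I do not anticipate any real obstacle here: the only point requiring a moment's care is the divisibility $(q-1)\mid b-qa$ in the reverse inclusion, which follows from the rewriting $b-qa=(b-a)-a(q-1)$; everything else is bookkeeping.
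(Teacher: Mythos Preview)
Your proof is correct and follows essentially the same approach as the paper's: both arguments write an arbitrary admissible monomial $x^{j}y^{n-j}$ as $(xy^{q})^{j}\cdot (y^{q-1})^{m}$ with $m=(n-(q+1)j)/(q-1)$ and verify that $m$ is a nonnegative integer via the same divisibility and inequality checks. Your monoid-algebra phrasing is slightly more explicit about the algebraic independence (freeness of $M$ on $e_{1},e_{2}$), which the paper leaves implicit in the phrase ``In particular''.
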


\begin{proof}
It is clear that $y^{q-1}$ and $xy^q$ are elements of $R_{\Delta}$. Let $n\geq 0$ and $0\leq j \leq n$ such that $x^jy^{n-j}\in R_{\Delta}$. We can write $x^j y^{n-j} = (xy^q)^j y^{n-(q+1)j}$. Note that $n\geq (q+1)j$ and $q-1$ divides $n-(q+1)j=n-2j-(q-1)j$. It follows that $x^jy^{n-j}$ lies in the subalgebra of $k[x,y]$ generated by $y^{q-1}$ and $xy^q$.
\end{proof}

We give an interpretation of these sections. In the case of $G=\SL_{2,\FF_q}$, recall that for an $\FF_q$-scheme $S$, the groupoid $\GZip^\mu(S)$  consists of tuples $\underline{\Hcal}=(\Hcal,\omega,F,V)$ where
\begin{enumerate}[(1)]
    \item $\Hcal$ is a locally free $\Ocal_S$-module of rank $2$ with a trivialization $\det(\Hcal)\simeq \Ocal_S$,
    \item $\omega\subset \Hcal$ is a locally free $\Ocal_S$-submodule of rank $1$ such that $\Hcal/\omega$ is locally free,
    \item $F \colon \Hcal^{(q)}\to \Hcal$ and $V \colon \Hcal \to \Hcal^{(q)}$ are $\Ocal_S$-linear maps satisfying the conditions $\Ker(F)=\Im(V)=\omega^{(q)}$ and $\Ker(V)=\Im(F)$.
\end{enumerate}
Consider the flag space $\Fcal_G$ over $\GZip^\mu$ parametrizing pairs $(\underline{\Hcal},\mathcal{L})$ with $\mathcal{L}\subset \Hcal$ a  locally free $\Ocal_S$-submodule of rank $1$ such that $\Hcal/\mathcal{L}$ is locally free. The natural projection map $\pi_G \colon \Fcal_G\to \GZip^\mu$ is a $\PP^1$-fibration. For $n\in \ZZ$, the push-forward $\pi_{G,*}(\mathcal{L}^{-n})$ coincides with the vector bundle $\mathcal{V}_{\Delta}(\chi_n)$. Consider the map
\begin{equation}
\mathcal{L} \subset \Hcal \xrightarrow{V} \Hcal^{(q)}\to (\Hcal/\mathcal{L})^{(q)}\simeq \mathcal{L}^{-q},
\end{equation}
where we used that $\Hcal/\mathcal{L}\simeq \mathcal{L}^{-1}$ by the trivialization $\det(\Hcal)\simeq \Ocal_S$. We obtain a section of $\mathcal{L}^{-(q+1)}$. It corresponds to the element $x y^q$ in Proposition \ref{prop-RG}. On the other hand, the classical Hasse invariant $\mathit{Ha} \in H^0(S,\omega^{q-1})$ is given by the map $V \colon \omega \to \omega^{(q)}\simeq \omega^q$. By sending  $\mathit{Ha}$ under the morphism 
\begin{equation}
\omega \subset \Hcal \to \Hcal/\mathcal{L}\simeq \mathcal{L}^{-1},
\end{equation}
we obtain a section of $\mathcal{L}^{-(q-1)}$. This section corresponds to $y^{q-1}$ in Proposition \ref{prop-RG}.

\subsection{Example on $L$-semisimplification}\label{counter-Lss}
We give an example which shows that Proposition \ref{Lssreg} does not hold in general without the assumption that $P$ is defined over $\FF_q$. Let $G=\res_{\FF_{q^2}/\FF_q} \SL_{2,\FF_{q^2}}$ 
and 
\[
 \mu \colon \GG_{\mathrm{m},k} \to G_k \simeq \SL_{2,k} \times \SL_{2,k};\ z \mapsto 
 \left( 
 \begin{pmatrix}
 z & 0 \\ 0 & z^{-1} 
 \end{pmatrix}, 
 \begin{pmatrix}
 1 & 0 \\ 0 & 1 
 \end{pmatrix} 
 \right). 
\] 
Let $\Zcal_{\mu}=(G,P,L,Q,M,\varphi)$ be the associated zip datum. 
We have 
$P=B_2 \times \SL_{2,k}$, $L=T_2 \times \SL_{2,k}$, $Q=\SL_{2,k} \times B_2^+$ and $M=\SL_{2,k} \times T_2$. 
We take $(B,T)=(B_2 \times B_2,T_2 \times T_2)$ as a Borel pair and 
take a frame as in Lemma \ref{lem-framemu}. 
Then $\Delta^P$ consists of one root $\alpha =\chi_2 \boxtimes \chi_0$. 
We have 
\[
 L_{\varphi} = \left\{ 
 \left( 
 \begin{pmatrix} x & 0 \\ 0 & x^{-1} \end{pmatrix}, 
 \begin{pmatrix} x^q & y \\ 0 & x^{-q} \end{pmatrix}
 \right) \in 
 L \relmiddle| 
 x \in \FF_{q^2}^{\times}, y^q=0 
 \right\} . 
\]
We have 
\[
 \delta_{\alpha}=\frac{-\alpha^{\vee}-q\sigma(\alpha^{\vee})}{q^2 -1}, \quad 
 \mathbf{r}_{\alpha}=\left( \frac{q^2+1}{q^2-1}, \frac{-(q^2+1)}{q(q^2-1)} \right), \quad 
 (\ZZ^2)_{\mathbf{r}_{\alpha}}=
 \{(n_1,n_2) \in \ZZ^2 \mid qn_1 =n_2 \}. 
\]
We define $\rho \colon P\to \GL(V)$ by 
\[
 \left( \Sym^{q^2-1}(\Std) \otimes \chi_{q^2-1} \right) \boxtimes 
 \Sym^{q^2-1}(\Std^{(q)}). 
\]
We write $(V',\rho')$ for 
$(V^{L\textrm{-ss}},\rho^{L\textrm{-ss}})$. 
Then we have 
$V^{L_\varphi}=V$ and $V'^{L_\varphi}=V'$. 
We put $\nu=\chi_0 \boxtimes \chi_{-q(q^2-3)}$. 
We have 
\[
 V_{[\nu]} =
 V_{\nu} \oplus 
 V_{\nu +\alpha-q \sigma(\alpha)}. 
\]
We parametrize elements $[\mathbf{j}]\in \ZZ^2/(\ZZ^2)_{\mathbf{r}_\alpha}$ by classes $[(0,j)]$ with $j\in \ZZ$. 
Using this notation, we have 
\begin{align*}
 &\fil_{\delta_{\alpha}}^{\Xi_{\alpha},\mathbf{a}_{\alpha},\mathbf{r}_{\alpha}} V_{[\nu]} = 
 \bigcap_{j \in \ZZ} 
 \bigcap_{\substack{\chi \in [\nu +j \sigma(\alpha)],\\ j r_{\alpha,2} >\delta_{\alpha}(\chi)}} 
 \Ker \left( \sum_{j_1 \in \ZZ} 
 \pr_{\chi} \circ  E_{-\alpha}^{(j_1)} \circ E_{\sigma(\alpha)}^{(j+qj_1)} \colon 
 V_{[\nu]} \to V_{\chi}
 \right) 
\end{align*}
because $(-1)^{j_1} (-1)^{j+q j_1}=(-1)^j \in k$. 
We have $V_{\chi}\neq 0$ if and only if 
$\chi=\nu+i_1 \alpha +q i_2 \sigma(\alpha)$ for $0 \leq i_1 \leq q^2-1$ and $-1 \leq i_2 \leq q^2-2$. 
For $\chi=\nu+i_1 \alpha +q i_2 \sigma(\alpha)$, 
the conditions 
$\chi \in [\nu +j \sigma(\alpha)]$ and 
$j r_{\alpha,2} >\delta_{\alpha}(\chi)$ 
hold if and only if 
$j=q(i_1+i_2)$ and 
$i_2 -i_1 >q^2-2-2/(q^2-1)$. 
Hence 
\[ 
 \chi \in [\nu +j \sigma(\alpha)],\  
 j r_{\alpha,2} >\delta_{\alpha}(\chi),\ 
 V_{\chi}\neq 0 
 \Longleftrightarrow 
 \chi=\nu+q(q^2-2)\sigma(\alpha),\ 
 j=q(q^2-2). 
\]
We put 
$\chi_0=\nu+q(q^2-2)\sigma(\alpha)$ and $j_0=q(q^2-2)$. 
Then we have 
\begin{align*}
 \fil_{\delta_{\alpha}}^{\Xi_{\alpha},\mathbf{a}_{\alpha},\mathbf{r}_{\alpha}} V_{[\nu]} 
 &= 
 \Ker \left(  \pr_{\chi_0} 
 \circ \left( 
  E_{\sigma(\alpha)}^{(j_0)} +
  E_{-\alpha}^{(1)} \circ E_{\sigma(\alpha)}^{(j_0 +q)} 
 \right) \colon 
 V_{[\nu]} \to V_{\chi_0} \right) \\
 &= \Bigl\{ (v_1,v_2) \in  V_{\nu} \oplus 
 V_{\nu+\alpha -q \sigma(\alpha)} 
 \Bigm| E_{\sigma(\alpha)}^{(j_0)} (v_1) +
  (E_{-\alpha}^{(1)} \circ E_{\sigma(\alpha)}^{(j_0 +q)})(v_2) =0 \Bigr\}. 
\end{align*} 
We note that 
\[
 E_{\sigma(\alpha)}^{(j_0)} \colon 
 V_{\nu} \to V_{\chi_0}, \quad 
 E_{-\alpha}^{(1)} \circ E_{\sigma(\alpha)}^{(j_0 +q)}
 \colon 
 V_{\nu+\alpha -q \sigma(\alpha)} \to V_{\chi_0}
\]
are isomorphisms. 
In the same way, we have 
\[
 \fil_{\delta_{\alpha}}^{\Xi_{\alpha},\mathbf{a}_{\alpha},\mathbf{r}_{\alpha}} V'_{[\nu]} 
 = 
 \Ker \left(  \pr_{\chi_0} 
 \circ 
  E_{\sigma(\alpha)}^{(j_0)} 
 \colon 
 V'_{[\nu]} \to V'_{\chi_0} \right) 
  =V'_{\nu+\alpha -q \sigma(\alpha)} 
\]
using $E_{-\alpha}^{(1)}=0$ for $(V',\rho')$. 
Hence  
$\fil_{\delta_{\alpha}}^{\Xi_{\alpha},\mathbf{a}_{\alpha},\mathbf{r}_{\alpha}} V_{[\nu]} \not\subset \fil_{\delta_{\alpha}}^{\Xi_{\alpha},\mathbf{a}_{\alpha},\mathbf{r}_{\alpha}} V'_{[\nu]}$. 
Therefore we have 
$H^0(\GZip^\mu,\mathcal{V}(\rho)) \not\subset 
H^0(\GZip^\mu,\mathcal{V}(\rho'))$. 

\subsection{The case of the unitary group $\U (2,1)$ with $p$ inert}
In this section, we examine an example that arises in the study of Picard surfaces. These are Shimura varieties of PEL-type (in particular, of Hodge-type) attached to unitary groups $\mathbf{G}$ over $\QQ$ with respect to some totally imaginary quadratic extension $\mathbf{E}/\QQ$. 
We impose that $\mathbf{G}_\RR \simeq \GU(2,1)$. We choose a rational prime $p$ that is inert in $\mathbf{E}$ and consider the attached zip datum $(G,P,Q,L,M,\varphi)$. 
Since $p$ is inert, the parabolic $P$ is not defined over $\FF_p$. We study the space $H^0(\GZip^\mu,\mathcal{V}_I (\lambda))$. To simplify, we will work with a unitary group $\U$, instead of a group of unitary similitudes $\GU$. The case of $\GU$ is very similar. 

Let $(V,\psi)$ be a $3$-dimensional vector space over $\FF_{q^2}$ endowed with a non-degenerate hermitian form $\psi \colon V\times V\to \FF_{q^2}$ (in the context of Shimura varieties, take $q=p$). Write $\Gal(\FF_{q^2}/\FF_q)=\{\id,\sigma\}$.  We take a basis $\Bcal=(v_1,v_2,v_3)$ of $V$ where $\psi$ is given by the matrix 
\[
 J= \begin{pmatrix}
&&1\\&1&\\1&&\end{pmatrix}. 
\]
We define a reductive group $G$ by
\begin{equation}
G(R) = \{f\in \GL_{\FF_{q^2}}(V\otimes_{\FF_q} R) \mid  \psi_R(f(x),f(y))=\psi_R(x,y), \ \forall x,y\in V\otimes_{\FF_q} R \}
\end{equation}
for any $\FF_q$-alegebra $R$. One has an identification $G_{\FF_{q^2}}\simeq \GL(V)$, given as follows: For any $\FF_{q^2}$-algebra $R$, we have an $\FF_{q^2}$-algebra isomorphism $\FF_{q^2}\otimes_{\FF_q} R\to R\times R$, $a\otimes x\mapsto (ax,\sigma(a)x)$. By tensoring with $V$, we obtain an isomorphism $V\otimes_{\FF_q}R\to (V\otimes_{\FF_{q^2}}R)\oplus (V\otimes_{\FF_{q^2}}R)$. Then any element of $G(R)$ stabilizes this decomposition, and is entirely determined by its restriction to the first summand. This yields an isomorphism as claimed. Using the basis $\Bcal$, we identify $G_{\FF_{q^2}}$ with $\GL_{3,\FF_{q^2}}$. The action of $\sigma$ on the set $\GL_3(k)$ is given as follows: $\sigma\cdot A = J \sigma({}^t \!A)^{-1}J$. Let $T$ denote the maximal diagonal torus and $B$ the lower-triangular Borel subgroup of $G_k$. Note that by our choice of the basis $\Bcal$, the groups $B$ and $T$ are defined over $\FF_q$. Identify $X^*(T)=\ZZ^3$ such that $(k_1,k_2,k_3)\in \ZZ^3$ corresponds to the character $\diag(x_1,x_2,x_3)\mapsto \prod_{i=1}^3 x_i^{k_i}$. The simple roots are $\Delta=\{e_1-e_2, e_2-e_3 \}$, where $(e_1,e_2,e_3)$ is the canonical basis of $\ZZ^3$. 

Define a cocharacter $\mu  \colon  \GG_{\mathrm{m},k}\to G_{k}$ such that  $\mu$ is given by $x\mapsto \diag(x,x,1)$ via the identification $G_{k}\simeq \GL_{3,k}$. 
Let $\Zcal_{\mu}=(G,P,L,Q,M,\varphi)$ be the associated zip datum. Note that $P$ is not defined over $\FF_q$. One has $I=\{e_1-e_2\}$ and  $\Delta^P=\{\alpha\}$ with $\alpha=e_2-e_3$. 

\begin{lemma}\label{lemma-mu-ord-Hasse-invariant-GU21}
Let $H$ be the function on $G_k$ defined by 
\[
 H\left( (x_{i,j})_{1\leq i,j\leq 3}\right) = x_{1,1}^q \Delta_1 - x_{2,1}^q \Delta_2 
 \quad \textrm{with } 
\begin{cases}
\Delta_1=x_{1,1}x_{2,2}-x_{1,2}x_{2,1}, \\
\Delta_2=x_{1,1}x_{2,3}-x_{2,1}x_{1,3}.
\end{cases}
\]
The $\mu$-ordinary stratum $U_\mu\subset G_k$ 
is equal to the complement of 
the vanishing locus of $H$. 
\end{lemma}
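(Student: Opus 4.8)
The plan is to identify the $\mu$-ordinary locus $U_\mu \subset G_k$ explicitly as an $E$-orbit, using the parametrization of Theorem \ref{thm-E-orb-param}, and then to verify that the function $H$ is a defining equation for its complement. First I would compute the relevant combinatorial data: with $I = \{e_1-e_2\}$ we have $W_{I}$ generated by the reflection $s_{e_1-e_2}$, so $W_L \cong \ZZ/2$, the longest element $w_{0,I}$ is this reflection, and ${}^IW$ consists of three elements (since $[W:W_I]=3$ for $G$ of type $A_2$). The $\mu$-ordinary stratum corresponds to the longest element $w_{0,I}w_0 \in {}^IW$, and by the discussion after Theorem \ref{thm-E-orb-param} together with the frame $z = \sigma(w_{0,I})w_0$ from Lemma \ref{lem-framemu}, $U_\mu$ is the $E$-orbit of $1 \in G_k$ (this is exactly the computation carried out in \S\ref{subsec-ordloc}). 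So concretely $U_\mu = \{\, a b^{-1} \mid (a,b)\in E \,\}$, where $E$ is the zip group for this datum.

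Next I would make the zip group $E$ explicit. Since $P$ is the (geometrically) lower-block-triangular parabolic stabilizing the flag-type determined by $\mu = \diag(x,x,1)$, with Levi $L \cong \GL_2 \times \GL_1$ (block sizes $2,1$ along the anti-diagonal form $J$), and $Q = P_+(\mu)^{(q)}$ is the opposite-type parabolic twisted by Frobenius, the pairs $(a,b) \in E$ are those with $a \in P$, $b \in Q$ and $\varphi(\theta^P_L(a)) = \theta^Q_M(b)$. The key point is that $H$ should be (up to a unit) the "first nonvanishing" block determinant attached to this orbit: $\Delta_1 = x_{1,1}x_{2,2}-x_{1,2}x_{2,1}$ is the $2\times 2$ minor on rows $1,2$ and columns $1,2$, i.e. the determinant of the block of $g$ that must be invertible for $g$ to lie in the big cell $P \cdot 1 \cdot Q^{-1}$; and the Frobenius-twisted correction terms $x_{1,1}^q, x_{2,1}^q$ and $\Delta_2$ encode the compatibility condition $\varphi(\theta^P_L(a)) = \theta^Q_M(b)$. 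I would verify directly that for $g = ab^{-1}$ with $(a,b) \in E$ one has $H(g) = c\cdot(\text{unit})$ for a nonzero constant, by writing $a,b$ in coordinates and computing; conversely, that $H(g) \neq 0$ forces $g$ into the open $E$-orbit. An efficient way to organize the converse is: $H$ is a nonzero regular function on $G_k$ (it is visibly not identically zero), its nonvanishing locus is open and $E$-stable (one checks $E$-invariance of the divisor of $H$, or better, of $H$ up to a character, directly from the coordinate description of the $E$-action), hence it is a union of $E$-orbits of codimension $0$; since there is a unique open $E$-orbit, namely $U_\mu$, the nonvanishing locus is contained in $U_\mu$, and the reverse inclusion follows because $U_\mu$ is irreducible and $H$ does not vanish identically on it (evaluate $H(1) = 1 \cdot \Delta_1(1) - 0 = 1 \neq 0$).

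The main obstacle I anticipate is the bookkeeping in the explicit coordinate computation of the $E$-action and of $H(ab^{-1})$: one must pin down precisely which block-triangular forms $P$ and $Q$ take relative to the anti-diagonal hermitian form $J$ (including the Frobenius twist defining $Q$), and then check that the combination $x_{1,1}^q\Delta_1 - x_{2,1}^q\Delta_2$ is exactly the $E$-semi-invariant cutting out the complement of the open orbit — equivalently, that it transforms under $E$ by a character of $L_\varphi$. A clean shortcut, which I would use to avoid a brute-force $3\times 3$ computation, is to recognize $H$ as $\det$ of the "reduction map" $\theta$: namely the map $U_\mu \to L$ analogous to $\Theta$ in the Lemma of \S on $L$-semisimplification is here twisted (since $P$ is not defined over $\FF_q$) and its existence/regularity on the open orbit is governed precisely by nonvanishing of the leading minor $\Delta_1$ together with its Frobenius partner; identifying $H$ as the relevant determinantal obstruction then gives both inclusions at once. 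I would present the argument in that order: combinatorics $\Rightarrow$ $U_\mu = E\cdot 1$; explicit $E$ and big cell; $H$ is the $E$-semi-invariant; conclude equality of loci by irreducibility and uniqueness of the open orbit.
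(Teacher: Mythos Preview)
Your core computation---evaluate $H(ab^{-1})$ for $(a,b)\in E$ and see it is a nonvanishing expression---is exactly what the paper does, and it gives the inclusion $U_\mu\subset\{H\neq 0\}$. So that direction is fine and matches the paper.

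The gap is in your ``converse'' ($\{H\neq 0\}\subset U_\mu$). The step ``$D(H)$ is open and $E$-stable, hence it is a union of $E$-orbits of codimension $0$'' is not valid: an open $E$-stable set is a union of $E$-orbits, but nothing forces those orbits to have codimension $0$ (for instance $U_\mu$ together with the codimension-$1$ orbit is open and $E$-stable). To push your argument through you would really need to verify that $H$ is an $E$-semi-invariant on all of $G_k$ (not just that $V(H)$ is $E$-stable), and then check that $H$ vanishes at some point of each non-open orbit. That is doable but costs an extra coordinate computation you have not carried out.

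The paper sidesteps all of this with a one-line dimension argument that you should use instead. Since $|\Delta^P|=1$ here, Theorem~\ref{thm-E-orb-param} gives a \emph{unique} codimension-$1$ $E$-orbit, and by the closure relation \eqref{equ-closure-rel} its closure is exactly $G_k\setminus U_\mu$; in particular $G_k\setminus U_\mu$ is irreducible of codimension $1$. Once you know $H$ does not vanish on $U_\mu$, the vanishing locus $V(H)$ is a (nonempty, pure codimension-$1$) hypersurface contained in the irreducible codimension-$1$ set $G_k\setminus U_\mu$, hence equal to it. No $E$-semi-invariance of $H$ is needed. Replace your converse paragraph with this observation and the proof is complete and identical to the paper's.
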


\begin{proof}
In this case, there is a unique $E$-orbit of codimension $1$ by the first part of Theorem \ref{thm-E-orb-param}. Furthermore, this $E$-orbit is dense in $G_k \setminus U_\mu$ by the closure relation. Hence, it suffices to show that $H$ does not vanish on $U_\mu$. The group $E$ consists of pairs $(x,y)\in P\times Q$ with
\[
x=\left(\begin{matrix}
a&b&0\\ c&d& 0\\ e&f&g
\end{matrix} \right) \quad \textrm{ and } \quad y=\left(\begin{matrix}
g^q&h&i\\ 0&d^q& b^q\\ 0&c^q&a^q
\end{matrix} \right)^{-1}.
\]
Since $1\in U_\mu$, the open $U_\mu$ consists of elements of the form $xy^{-1}$. We find
\begin{align}
H(xy^{-1}) = (ag^q)^q g^q d^q (ad-bc) - (cg^q)^q g^q b^q (ad-bc) = g^{q^2+q}(ad-bc)^{q+1}.
\end{align}
This expression is nonzero, so the result is proved.
\end{proof}

We have 
\begin{equation}\label{eq-Lvarphi-GU21}
L_\varphi = \left\{ 
\begin{pmatrix}
a&b &\\ &d& \\ &&a^{-q}
\end{pmatrix} \in L \relmiddle| a,d \in \FF_{q^2}^{\times},\ d^{q+1}=1,\ b^q=0 
\right\}. 
\end{equation}
The endomorphism $\wp_* \colon X_*(T)_{\RR} \to X_*(T)_{\RR}$ is given by the matrix
\begin{equation}
\wp_*=\left( 
\begin{matrix}
1&&q \\ &1+q& \\ q&&1
\end{matrix}
\right).
\end{equation}
Hence it follows that $\delta_\alpha = \wp_*^{-1}(\alpha^\vee)=\frac{1}{q^2-1}(-q,q-1,1)$. We have $m_\alpha=2$, $\mathbf{a}_\alpha=(-1,-1)$, $\Xi_\alpha=(-\alpha,\sigma(\alpha))$, and
\[
 \mathbf{r}_\alpha=\left(\frac{q^2-q+1}{q^2-1},\frac{-q^2+q-1}{q(q^2-1)} \right), \quad (\ZZ^2)_{\mathbf{r}_\alpha}=\{(n_1,n_2)\in \ZZ^2 \mid  qn_1=n_2\}.
\]
The group $\Lambda_{\Xi_{\alpha},\mathbf{r}_{\alpha}}$ is
\[\Lambda_{\Xi_{\alpha},\mathbf{r}_{\alpha}}= \ZZ(q,-(q+1),1). \]

Let $\lambda=(\lambda_1,\lambda_2,\lambda_3)$ be an $L$-dominant character (i.e. $\lambda_1\geq \lambda_2$), and consider the $L$-representation $V_I (\lambda)$. 
We simply write $V$ for $V_I (\lambda)$ sometimes. 
Under the isomorphism 
\[
 \GL_{2}\times \GG_{\mathrm{m}} \to L;\ (A,z) \mapsto 
 \left( \begin{matrix}
 A &\\ & z
 \end{matrix} \right), 
\]
the representation $V$ corresponds to the representation
\[ 
 \deter_{\GL_2}^{\lambda_2} \otimes \Sym^{\lambda_1-\lambda_2}(\Std_{\GL_{2}})\otimes \xi_{\lambda_3} 
\]
where $\xi_r$ is the character of 
$\GL_{2}\times \GG_{\mathrm{m}}$ given by $(A,z)\mapsto z^r$. Hence $V$ is a representation of dimension $\lambda_1-\lambda_2+1$ and it has weights
\[
 \nu_i \colonequals (\lambda_1-i,\lambda_2+i,\lambda_3), \quad 0\leq i \leq \lambda_1-\lambda_2. 
 \]
Note that the difference $\nu_i-\nu_{i'}$ of two weights is never in $\Lambda_{\Xi_{\alpha},\mathbf{r}_{\alpha}}$ unless $i=i'$. Therefore $V_{[\nu]}=V_{\nu}$ for all $\nu\in \ZZ^3$. 
One deduces 
\begin{equation}\label{equ-VlambdaLphi}
V^{L_\varphi}=\bigoplus_{\substack{q|i, \ q+1 | \lambda_2+i, \\ q^2-1| \lambda_1-i-q\lambda_3}} V_{\nu_i}.
\end{equation}
It remains to determine $\fil_{\delta_{\alpha}}^{\Xi_{\alpha},\mathbf{a}_{\alpha},\mathbf{r}_{\alpha}} V_{\nu}$, which is either $0$ or $V_\nu$. We parametrize elements $[\mathbf{j}]\in \ZZ^2/(\ZZ^2)_{\mathbf{r}_\alpha}$ by classes $[(0,j)]$ with $j\in \ZZ$. Then, an element $\mathbf{j}\in [\mathbf{j}]$ can be written as $(0,j)+j_1(1,q)$ with $j_1\in \ZZ$. Using this notation, we obtain
\[
 \fil_{\delta}^{\Xi,\mathbf{a},\mathbf{r}} V_{\nu} = 
 \bigcap_{j\in \ZZ} 
 \bigcap_{\substack{\chi \in [\nu+j \sigma(\alpha)], \\   j r_{\alpha,2} > \delta_\alpha (\chi)}} 
 \Ker \left( \sum_{j_1 \in \ZZ}
 \pr_{\chi} \circ E_{-\alpha}^{(j_1)} \circ  E_{\sigma(\alpha)}^{(j+q j_1)}  \colon V_{\nu} \to 
 V_{\chi} 
 \right) 
\]
because $(-1)^{j_1} (-1)^{j+q j_1}=(-1)^j \in k$. 
We have $E^{(j_1)}_{-\alpha}=0$ unless $j_1=0$ because $\alpha\in \Delta^P$ and $V$ is trivial on $R_{\mathrm{u}}(P)$. Hence in the sum appearing in the above formula, only the case $j_1=0$ contributes. Furthermore, 
$E_{\sigma(\alpha)}^{(j)} (V_{\nu}) \subset 
V_{\nu+j\sigma(\alpha)}$. 
Hence we have 
\[
 \fil_{\delta}^{\Xi,\mathbf{a},\mathbf{r}} V_{\nu} = 
 \bigcap_{j> q\langle \nu, \delta_\alpha \rangle}
 \Ker \left(E_{e_1-e_2}^{(j)}  \colon V_{\nu} \to 
 V_{\nu+j(e_1-e_2)} 
 \right) . 
\]
Take $\nu=\nu_i$ for some $0\leq i \leq \lambda_1-\lambda_2$. We deduce $\fil_{\delta_{\alpha}}^{\Xi_{\alpha},\mathbf{a}_{\alpha},\mathbf{r}_{\alpha}} V_{\nu_i}=V_{\nu_i}$ if and only if for all $j \geq 0$ such that $j > q \langle \nu_i, \delta_\alpha \rangle$, one has $E^{(j)}_{e_1-e_2}(V_{\nu_i})=0$. 
Computing explicitly the representation $V$, one sees that this space is zero if and only if the binomial coefficient $\binom{i}{j}$ is divisible by $p$. In particular, it is never zero for $j=i$. We deduce that
\[\fil_{\delta_{\alpha}}^{\Xi_{\alpha},\mathbf{a}_{\alpha},\mathbf{r}_{\alpha}} V_{\nu_i}=V_{\nu_i} \Longleftrightarrow i \leq q\langle \nu_i,\delta_\alpha \rangle.\]
Furthermore, we find
\begin{equation}\label{equ-scalar-prod-taui}
\langle \nu_i,\delta_\alpha \rangle=\frac{i(2q-1)}{q^2-1}+\frac{1}{q^2-1}(-q\lambda_1+(q-1)\lambda_2+\lambda_3).
\end{equation}
For $\lambda=(\lambda_1,\lambda_2,\lambda_3)\in X_{+,I}^*(T)$, we put 
\[
 F(\lambda) = \frac{q}{q^2-q+1}
 (q\lambda_1-(q-1)\lambda_2-\lambda_3). 
\] 
We deduce:
\begin{proposition}
We have 
\begin{equation}\label{equ-propH0}
 H^0(\GZip^\mu,\mathcal{V}_I (\lambda)) = \bigoplus_{\substack{q|i, \ q+1 | \lambda_2+i, \\ q^2-1| \lambda_1-i-q\lambda_3,\ i \geq F(\lambda)} } V_I (\lambda)_{\nu_i}.
\end{equation}
\end{proposition}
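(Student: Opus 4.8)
The plan is to assemble the statement from Theorem~\ref{thm-main} together with the weight-space analysis carried out just above. Since here $\Delta^P$ consists of the single root $\alpha = e_2 - e_3$, Theorem~\ref{thm-main} applied to $V = V_I(\lambda)$ (regarded as a $P$-representation through $\theta^P_L$) gives
\[
H^0(\GZip^\mu,\mathcal{V}_I(\lambda)) \;=\; V^{L_\varphi} \cap \bigoplus_{[\nu] \in X^*(T)/\Lambda_{\Xi_\alpha,\mathbf{r}_\alpha}} \fil_{\delta_\alpha}^{\Xi_\alpha,\mathbf{a}_\alpha,\mathbf{r}_\alpha} V_{[\nu]}.
\]
First I would use the observation, already made above, that the weights $\nu_0,\dots,\nu_{\lambda_1-\lambda_2}$ of $V$ are pairwise incongruent modulo $\Lambda_{\Xi_\alpha,\mathbf{r}_\alpha}$, so that $V_{[\nu]} = V_\nu$ throughout and the right-hand intersection becomes the direct sum of those $V_{\nu_i}$ for which simultaneously $V_{\nu_i}\subseteq V^{L_\varphi}$ and $\fil_{\delta_\alpha}^{\Xi_\alpha,\mathbf{a}_\alpha,\mathbf{r}_\alpha} V_{\nu_i} = V_{\nu_i}$. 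The first membership is controlled by~\eqref{equ-VlambdaLphi}, which supplies exactly the three divisibility conditions $q\mid i$, $q+1\mid\lambda_2+i$, $q^2-1\mid\lambda_1-i-q\lambda_3$ appearing in the statement.

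It then remains to decide for which $i$ one has $\fil_{\delta_\alpha}^{\Xi_\alpha,\mathbf{a}_\alpha,\mathbf{r}_\alpha} V_{\nu_i} = V_{\nu_i}$. Because $\alpha\in\Delta^P$ and $V$ is trivial on $R_{\mathrm{u}}(P)$, the operators $E_{-\alpha}^{(j_1)}$ vanish for $j_1\neq 0$, so the filtration collapses to $\bigcap_{j > q\langle\nu_i,\delta_\alpha\rangle}\Ker\bigl(E_{\sigma(\alpha)}^{(j)}\colon V_{\nu_i}\to V_{\nu_i + j\sigma(\alpha)}\bigr)$ with $\sigma(\alpha) = e_1 - e_2$. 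Here I would unwind the isomorphism $V\cong\deter_{\GL_2}^{\lambda_2}\otimes\Sym^{\lambda_1-\lambda_2}(\Std_{\GL_2})\otimes\xi_{\lambda_3}$: the vector $v_1^{\lambda_1-\lambda_2-i}v_2^i$ spans $V_{\nu_i}$, and $u_{e_1-e_2}(x)$ sends it to $\sum_j\binom{i}{j}x^j v_1^{\lambda_1-\lambda_2-i+j}v_2^{i-j}$, so $E_{e_1-e_2}^{(j)}$ acts on $V_{\nu_i}$ by the scalar $\binom{i}{j}$ (up to a unit). Hence $E_{e_1-e_2}^{(j)}(V_{\nu_i}) = 0$ exactly when $p\mid\binom{i}{j}$; since $\binom{i}{i}=1$ this fails for $j=i$, while $\binom{i}{j}=0$ for $j>i$. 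Therefore $\fil_{\delta_\alpha}^{\Xi_\alpha,\mathbf{a}_\alpha,\mathbf{r}_\alpha}V_{\nu_i} = V_{\nu_i}$ if and only if no integer $j$ satisfies $i\geq j > q\langle\nu_i,\delta_\alpha\rangle$, i.e.\ if and only if $i \leq q\langle\nu_i,\delta_\alpha\rangle$.

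Finally I would substitute the explicit value of $\langle\nu_i,\delta_\alpha\rangle$ from~\eqref{equ-scalar-prod-taui} and clear the denominator $q^2-1 > 0$: the inequality $i\leq q\langle\nu_i,\delta_\alpha\rangle$ reads $i(q^2-1)\leq qi(2q-1) + q(-q\lambda_1+(q-1)\lambda_2+\lambda_3)$, and collecting the $i$-terms (whose coefficient is $-(q^2-q+1)$) and dividing by $q^2-q+1$ turns it into $i\geq\tfrac{q}{q^2-q+1}(q\lambda_1-(q-1)\lambda_2-\lambda_3) = F(\lambda)$. Intersecting this condition with the three divisibilities coming from $V^{L_\varphi}$ yields the formula~\eqref{equ-propH0}. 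I do not expect a genuine obstacle here: the only points needing care are the correct identification of $\sigma(\alpha)$ with $e_1-e_2$ via the Frobenius action on $X^*(T)$, and the explicit description of $E_{e_1-e_2}^{(j)}$ on $\Sym^{\lambda_1-\lambda_2}(\Std_{\GL_2})$ so that the vanishing criterion $p\mid\binom{i}{j}$ is read off correctly.
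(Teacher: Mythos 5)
Your proposal is correct and follows essentially the same route as the paper: apply Theorem \ref{thm-main} with the single root $\alpha=e_2-e_3$, use that the weights are pairwise incongruent mod $\Lambda_{\Xi_\alpha,\mathbf{r}_\alpha}$ and that $E^{(j_1)}_{-\alpha}=0$ for $j_1\neq 0$ to collapse the filtration to the kernels of $E^{(j)}_{e_1-e_2}$, read off the vanishing via the binomial coefficients $\binom{i}{j}$ (with $\binom{i}{i}=1$ forcing the condition $i\le q\langle\nu_i,\delta_\alpha\rangle$), and substitute \eqref{equ-scalar-prod-taui} to obtain $i\ge F(\lambda)$. The only difference is that you spell out the $\Sym^{\lambda_1-\lambda_2}(\Std_{\GL_2})$ computation that the paper leaves implicit, which is a welcome addition rather than a deviation.
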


\begin{enumerate}[(1)]
\item \label{U21-item1} For example, take $\lambda=(1+q,1,q)$. Then one sees that $V_I (\lambda)^{L_\varphi}=V_I (\lambda)_{\nu_q}$, where $\nu_q=(1,1+q,q)$. One finds $F(\lambda)=q$, hence $H^0(\GZip^\mu,\mathcal{V}_I (\lambda))=V_I (\lambda)_{\nu_q}$.
\item \label{U21-item2} Similarly, take $\lambda=(1,0,q)$. Then we find $V_I (\lambda)^{L_\varphi}=V_I (\lambda)_{\nu_0}$, where $\nu_0=\lambda=(1,0,q)$. We have $F(\lambda)=0$, hence again $H^0(\GZip^\mu,\mathcal{V}_I (\lambda))=V_I (\lambda)_{\nu_0}$.
\item \label{U21-item3} Take $\lambda=(q+1,q+1,q^2+q)$. Then $V_I (\lambda)$ is a one-dimensional representation of $L$ (i.e. a character), and $V_I (\lambda)^{L_\varphi}=V_I (\lambda)$. Since $F(\lambda) = -\frac{q(q^2-1)}{q^2-q+1}<0$, we have $H^0(\GZip^\mu,\mathcal{V}_I (\lambda))=V_I (\lambda)$. It is spanned by  the $\mu$-ordinary (non-classical) Hasse invariant $H$ given by Lemma \ref{lemma-mu-ord-Hasse-invariant-GU21}, also constructed in \cite{Goldring-Nicole-mu-Hasse} and \cite{Koskivirta-Wedhorn-Hasse}.
\end{enumerate}

Recall the cone $C_{\zip}\subset X_{+,I}^*(T)$ studied in \cite{Koskivirta-automforms-GZip}, \cite{Goldring-Koskivirta-global-sections-compositio}, defined as the set of $\lambda \in X^*(T)$ such that $H^0(\GZip^\mu,\mathcal{V}_I (\lambda))\neq 0$. 
In this example, we deduce that it is the set of $\lambda\in X_{+,I}^*(T)$ such that there exists $0\leq i \leq \lambda_1-\lambda_2$ satisfying the four conditions listed below the direct sum sign of \eqref{equ-propH0}. For a cone $C\subset X^*(T)$, write $\langle C \rangle$ for the saturated cone of $C$, i.e. the set of $\lambda\in X^*(T)$ such that $N\lambda$ lies in $C$ for some positive integer $N$.

\begin{corollary}
We have 
\[
 \langle C_{\zip} \rangle= \left\{  (\lambda_1,\lambda_2,\lambda_3)\in \ZZ^3 
 \mid 
\lambda_1\geq \lambda_2, \ 
(q-1)\lambda_1 +\lambda_2-q\lambda_3\leq 0
  \right\}. 
\]
\end{corollary}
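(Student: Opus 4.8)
The plan is to set
\[ C \colonequals \{(\lambda_1,\lambda_2,\lambda_3)\in\ZZ^3 \mid \lambda_1\geq\lambda_2,\ (q-1)\lambda_1+\lambda_2-q\lambda_3\leq 0\} \]
and to prove $\langle C_{\zip}\rangle = C$ by two inclusions, the whole argument resting on the elementary identity
\[ (q^2-q+1)(\lambda_1-\lambda_2) - q\bigl(q\lambda_1-(q-1)\lambda_2-\lambda_3\bigr) = -\bigl((q-1)\lambda_1+\lambda_2-q\lambda_3\bigr). \]
Since $q^2-q+1>0$, this identity shows that for any $\lambda=(\lambda_1,\lambda_2,\lambda_3)$ one has $F(\lambda)\leq\lambda_1-\lambda_2$ if and only if $(q-1)\lambda_1+\lambda_2-q\lambda_3\leq 0$, where $F(\lambda)=\tfrac{q}{q^2-q+1}(q\lambda_1-(q-1)\lambda_2-\lambda_3)$ is the quantity appearing in \eqref{equ-propH0}.

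First I would prove $\langle C_{\zip}\rangle\subseteq C$. Recall from the description of $C_{\zip}$ given just before the corollary that $\lambda\in C_{\zip}$ means $\lambda$ is $L$-dominant (so $\lambda_1\geq\lambda_2$) and that there is an integer $i$ with $F(\lambda)\leq i\leq\lambda_1-\lambda_2$ meeting the three divisibility conditions. In particular $F(\lambda)\leq\lambda_1-\lambda_2$, so by the equivalence above $\lambda\in C$; hence $C_{\zip}\subseteq C$. The cone $C$ is cut out by homogeneous linear inequalities, hence is saturated ($N\lambda\in C$ with $N>0$ forces $\lambda\in C$), so $\langle C_{\zip}\rangle\subseteq\langle C\rangle = C$.

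Next I would prove $C\subseteq\langle C_{\zip}\rangle$. Fix $\lambda\in C$, let $N$ be any positive multiple of $q(q^2-1)$, and I claim $i\colonequals N(\lambda_1-\lambda_2)$ witnesses $N\lambda\in C_{\zip}$ via \eqref{equ-propH0} applied to $N\lambda$. Indeed $N\lambda$ is $L$-dominant; $0\leq i\leq N(\lambda_1-\lambda_2)$ holds (with equality on the right); since $i$ is a multiple of $q(q^2-1)$ it is divisible by $q$, by $q+1$, and by $q^2-1$, and since $N$ is likewise divisible by all three one checks directly that $q\mid i$, that $q+1\mid N\lambda_2+i$ (which equals $N\lambda_1$), and that $q^2-1\mid N\lambda_1-i-qN\lambda_3$ (which equals $N(\lambda_2-q\lambda_3)$); finally the remaining condition $i\geq F(N\lambda)=NF(\lambda)$ is equivalent to $\lambda_1-\lambda_2\geq F(\lambda)$, which holds because $\lambda\in C$ by the equivalence above. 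Therefore $N\lambda\in C_{\zip}$, so $\lambda\in\langle C_{\zip}\rangle$. Combining the two inclusions gives $\langle C_{\zip}\rangle = C$, as asserted.

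The argument is mostly bookkeeping once the equivalence $F(\lambda)\leq\lambda_1-\lambda_2\Leftrightarrow(q-1)\lambda_1+\lambda_2-q\lambda_3\leq 0$ is in place; the one point requiring a little ingenuity is the reverse inclusion, namely choosing the witness $i=N(\lambda_1-\lambda_2)$ with $N$ a multiple of $q(q^2-1)$. This choice makes all three congruence constraints of \eqref{equ-propH0} automatic while simultaneously collapsing the inequality $i\geq NF(\lambda)$ to precisely the defining inequality of $C$, so that no case distinction on the sign of $F(\lambda)$ or on $\lambda_1=\lambda_2$ versus $\lambda_1>\lambda_2$ is needed.
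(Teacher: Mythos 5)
Your proof is correct and follows essentially the same route as the paper: both directions rest on the equivalence $F(\lambda)\leq\lambda_1-\lambda_2\Leftrightarrow(q-1)\lambda_1+\lambda_2-q\lambda_3\leq 0$, and the reverse inclusion uses the same witness $i=\lambda_1-\lambda_2$ after rescaling by a multiple of $q(q^2-1)$, exactly as in the paper. Your write-up merely makes explicit the linear identity behind the equivalence and the verification of the divisibility conditions, which the paper leaves to the reader.
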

\begin{proof}
Assume that $\lambda\in C_{\zip}$. Then in particular $\lambda_1-\lambda_2 \geq F(\lambda)$, which amounts to $(q-1)\lambda_1 +\lambda_2-q\lambda_3\leq 0$. Conversely, assume that $\lambda \in X^*_{+,I}(T)$ satisfies $\lambda_1-\lambda_2 \geq F(\lambda)$. Then, after changing $\lambda$ to $q(q^2-1)\lambda$, we find that $i=\lambda_1-\lambda_2$ satisfies the four conditions below the direct sum sign of \eqref{equ-propH0}, hence $\lambda\in \langle C_{\zip} \rangle$. This terminates the proof.
\end{proof}

\begin{rmk}
The two sections of weight $(1+q,1,q)$ and $(1,0,q)$ given in \eqref{U21-item1} and \eqref{U21-item2} are partial Hasse invariants (viewing them as section of the stack of zip flags $\GF^\mu$, their vanishing locus is a single flag stratum, see \cite[\S 1.3]{Koskivirta-automforms-GZip} for details). Their weights generate the cone $\langle C_{\Sbt}\rangle$ defined in \cite[Definition 1.7.1]{Koskivirta-automforms-GZip}. The cone $\langle C_{\zip} \rangle$ is not spanned by these weights because $G$ does not satisfy the equivalent conditions of \cite[Lemma 2.3.1]{Koskivirta-automforms-GZip}. We also refer to \cite{Goldring-Imai-Koskivirta-weights} for a general study of the cone $C_{\zip}$ as well as related results.
\end{rmk}

\vspace*{0.8em}

\noindent
{\bf Acknowledgements.} The authors thank the referee for helpful comments and suggestions. 
This work was supported by JSPS KAKENHI Grant Numbers 
18F18311 and 18H01109. 

%\noindent
%{\bf Conflicts of interest:} None. \\ 

%\noindent
%{\bf Financial support:} 

\newcommand{\etalchar}[1]{$^{#1}$}

\noindent
Naoki Imai\\
Graduate School of Mathematical Sciences, The University of Tokyo, 
3-8-1 Komaba, Meguro-ku, Tokyo, 153-8914, Japan \\
naoki@ms.u-tokyo.ac.jp\\ 

\noindent
Jean-Stefan Koskivirta\\
Department of Mathematics, Faculty of Science, Saitama University, 
255 Shimo-Okubo, Sakura-ku, Saitama City, Saitama 338-8570, Japan \\
jeanstefan.koskivirta@gmail.com

\end{document}